\documentclass[12pt, reqno]{amsart}
\usepackage{amsfonts}
\usepackage{bbm}
\usepackage{amscd,amsfonts}
\usepackage{amssymb, eucal, amsfonts, amsmath, xypic, latexsym}
\usepackage{pifont}
\usepackage{mathrsfs,color}
\usepackage{amsthm,indentfirst,bm,fancyhdr,dsfont}
\usepackage{graphicx}
\usepackage[all]{xy}
\usepackage[CJKbookmarks=true]{hyperref}

\usepackage{mathrsfs}
\usepackage{amsmath}
\usepackage{amssymb}
\usepackage{hyperref}

\newtheorem{theorem}{Theorem}[section]
\newtheorem{lemma}[theorem]{Lemma}
\newtheorem{prop}[theorem]{Proposition}

\newtheorem{corollary}[theorem]{Corollary}
\theoremstyle{definition}

\newtheorem{remark}[theorem]{Remark}

\numberwithin{equation}{section}

\def\ggg{\mathfrak{g}}
\def\v{\mathfrak{v}}

\def\calv{\mathcal{V}}
\def\cm{\mathcal{M}}
\def\cz{\mathcal{Z}}
\def\cq{\mathcal{Q}}

\def\cs{{\mathcal{S}}}

\def\co{\mathcal{O}}
\def\cC{\mathcal{C}}
\def\cw{\mathcal{W}}
\def\calm{\mathcal{M}}
\def\cU{\mathcal{U}}

\def\sfn{\mathsf{N}}

\def\g{\mathfrak{g}}

\def\ggg{\mathfrak{g}}
\def\mmm{\mathfrak{m}}
\def\ppp{\mathfrak{p}}

\def\hhh{\mathfrak{h}}

\def\fh{\mathfrak{H}}

\def\bfk{\mathbf{k}}

\def\bba{\mathbb{A}}
\def\bbc{\mathbb{C}}
\def\bbf{\mathbb{F}}
\def\bbz{\mathbb{Z}}

\def\BbbB{\mathbf{B}}

\def\bbk{{\mathds{k}}}

\def\sfd{\textsf{d}}

\def\sfh{\textsf{h}}
\def\sft{\textsf{t}}
\def\tsN{\textsf{N}}

\def\Lie{\text{Lie}}
\def\ad{\text{ad}}

\def\det{\text{det}}

\def\Ad{\text{Ad}}

\def\gr{\text{gr}}

\def\Frac{\mbox{Frac}}
\def\rank{\text{rank}}

\def\Specm{\text{Specm}}
\def\reg{\text{reg}}

\def\rss{\text{rss}}

\def\ss{\text{ss}}

\def\pr1{\text{pr}_1}
\def\reg{\text{reg}}

\def\scrz{\mathscr{Z}}

\begin{document}

\title[Zassenhaus varieties]{On the Zassenhaus varieties of finite $W$-algebras  in prime characteristic} \author{Bin Shu and Yang Zeng}

\address{School of Mathematical Sciences, Ministry of Education Key Laboratory of Mathematics and Engineering Applications \& Shanghai Key Laboratory of PMMP,  East China Normal University, No. 500 Dongchuan Rd., Shanghai 200241, China} \email{bshu@math.ecnu.edu.cn}

\address{{School of Mathematics,} Nanjing Audit University, No. 86 West Yushan Rd., Nanjing, Jiangsu Province 211815, China}
\email{zengyang@nau.edu.cn}

\subjclass[2010]{Primary 17B35, Secondary 17B45 and 17B50}
 \keywords{finite $W$-algebras, rational variety}
  \thanks{This work is partially supported by the National Natural Science Foundation of China ({Nos. 12071136, 12271345,
12461005}), and by Science and Technology Commission of Shanghai Municipality (No. 22DZ2229014).}

\begin{abstract} Let $Z({\cw})$ be the center of the finite $W$-algebra $\cw({\ggg},e)$ associated with $\ggg=\Lie(G)$ and a nilpotent element $e\in\ggg$ for a  connected reductive algebraic group $G$ over an algebraically closed field $\bbk$ of prime characteristic $p$ under the standard hypotheses (H1)-(H3) (see \cite[\S6.3]{Jan2}).
In this paper, we first demonstrate  that our previous results in \cite{SZ} on the structure and geometric properties  of $Z({\cw})$ for $p\gg0$  are still true under the present weakened restriction on $p$.   Then we study the Zassenhaus variety $\scrz$ of $\cw({\ggg},e)$, which is by definition  the maximal spectrum $\text{Specm}(Z({\cw}))$ of $Z({\cw})$.  On basis of the structure properties of $Z({\cw})$, we describe $\scrz$ via a good transverse slice $\cs$ and show {{that $\scrz$ is birationally equivalent to $\cs$, thereby  a rational affine scheme. In the special case when $e=0$,  we reobtain  one of  the main results of \cite{T} on the rationality of the Zassenhaus varieites for reductive Lie algebras in prime characteristic.}}
\end{abstract}
\maketitle
\setcounter{tocdepth}{1}
\tableofcontents
\section{Introduction and Preliminaries}
\subsection{}
The study of finite $W$-algebras associated with a reductive Lie algebra and its regular nilpotent element over the complex number field $\bbc$  can be traced back to Kostant's work  (see \cite{Ko}). Kostant's construction was generalized to arbitrary even nilpotent elements by Lynch \cite{Lyn}.  Premet developed the finite $W$-algebras in full generality in \cite{Pre1}. Recall that  a finite $W$-algebra $U(\ggg_\bbc,\hat e)$ is a certain associative algebra associated to a finite-dimensional complex semisimple or reductive Lie algebra ${\ggg_\bbc}$ and a nilpotent  element $\hat e\in{\ggg}_\bbc$. We refer the readers to \cite{Pre1}, or to some other  survey papers (for example, \cite{A, L4, W}) for details.

As a counterpart of the finite $W$-algebra $U(\ggg_\bbc,\hat e)$ over $\bbc$, Premet defined  the finite $W$-algebra $\cw({\ggg},e)$ over a field $\bbk$ of characteristic  $p\gg0$  (see \cite{Pre3}).

\subsection{}
In Premet's formulation, the finite $W$-algebra  $\cw({\ggg},e)$ over ${\bbk}$
is obtained from the $\bbc$-algebra $U(\ggg_\bbc,\hat e)$ by the method of ``reduction modulo $p$". Hence the characteristic of field ${\bbk}$ must {be}
sufficiently large (see \cite{Pre3} for more details). In \cite{GT}, Goodwin and Topley generalized Premet's work to the case with $p=\text{char}(\bbk)$ satisfying Jantzen's standard hypotheses on the corresponding reductive algebraic group $G$ of $\ggg$ (cf. \cite[\S6]{Jan2}).

Along  Premet's formulation of finite $W$-algebras,  we developed in \cite{SZ} the theory of centers and the associated Azumaya property for the finite $W$-algebra $\cw({\ggg},e)$ in positive characteristic $p\gg0$. Our approach is roughly generalizing the classical theory of the centers of the universal enveloping algebra $U(\ggg)$ of the Lie algebra $\ggg$ of a semisimple algebraic group $G$, which was first developed  by Veldkamp (see \cite{Ve}, \cite{KW}, \cite{MiRu} and \cite{BGo}); and also applying Brown-Goodearl's arguments in \cite{BGl} on the Azumaya property for the algebras satisfying Auslander-regular and Maculay conditions to the modular finite $W$-algebra case.

\subsection{} One purpose of the present paper is to weaken the condition mentioned above for the main results in \cite{SZ}. Recall that there is another formulation  (\ref{equiW})  of finite $W$-algebras under the standard hypotheses (H1)-(H3) (\S\ref{sec: good grading}),   raised by Goodwin and Topley \cite{GT}, which is defined to be the invariants of Gelfand-Graev-Premet module $Q_\chi$  (\ref{eq: gelf gra mod}) under the action of the unipotent group $\mathcal{M}$ arising from the admissible subalgebra $\mmm$ (\ref{eq: adm sub}) introduced by Premet over $\bbk$.  This formulation is equivalent to the former one formulated by Premet whenever the former one can be defined (see \cite[Lemma 4.4 and Theorem 7.3]{GT}).  However, Goodwin-Topley's definition makes sense directly over $\bbk$, instead of ``reduction modulo $p$" through {the} $\bbc$-algebra $U(\ggg_\bbc,\hat e)$.  Making use of Goodwin-Topley's formation,  we will revisit the main results in \cite{SZ}, releasing the restriction $p\gg0$ there.

\subsubsection{}\label{sec: good grading} Let us  recall some basic materials.
Let $G$ be a connected reductive algebraic group over an algebraically closed field $\bbk$ of positive  characteristic $p$, and ${\ggg}=\text{Lie}(G)$. We introduce the following standard hypotheses on $G$ and $p$:
\begin{enumerate}
  \item [(H1)] The derived group $\mathcal DG$ of $G$ is { simply connected};
  \item [(H2)] The prime $p$ is good for $\ggg$;
  \item [(H3)] There exists a $G$-invariant non-degenerate bilinear form on $\ggg$.
\end{enumerate}
As to the explanation on (H1)-(H3), the reader refers to \cite[\S6.3-6.4]{Jan2}.

{
Let $g \in G$ and $x \in \ggg$.  We write $g.x$ for the image of $x$ under $g$ in the adjoint action,
$G_x$ for the centralizer of $x$ in $G$ and $\ggg_x$ for the centralizer of $x$ in $\ggg$.  Also we
define $G_{[x]} := \{g \in G \mid g.x \in \bbk x\}$ and call it the {\it normalizer of $x$ in $G$}.
We use similar notation when considering the coadjoint action of $G$ on $\ggg^*$.

For a given nilpotent element $e \in \ggg$, let $T$ be a maximal torus in $G$ containing a maximal torus $T_{[e]}^\circ$ of $G_{[e]}$,
and let $\Phi$ be the root system of $G$ with respect to $T$. Write $X_*(T)$ for the group of cocharacters of $T$. We write $G^\circ$ for the identity component of $G$, and define $T^{e}:=(T_{e} \cap \mathcal{D} G)^\circ$, $T^{[e]}:=(T_{[e]} \cap \mathcal{D} G)^\circ$.
By \cite[Lemma 5.3]{Jan3} and our choices, there exists a cocharacter
$\lambda: \bbk^\times \to G$ associated to $e$ with $\lambda\in X_*(T^{[e]})$, which means that $\lambda(t).e = t^2 e$ and $\lambda(\bbk^\times)$ is contained in the derived subgroup of
a Levi subgroup in which $e$ is distinguished (see \cite[Definition 5.3]{Jan3}).}

In \cite{Po},  Pommerening showed that under { the hypothesis (H2) and $p>2$ (note that $\text{GL}_n$ satisfies (H1)-(H3) for all primes)}, for any given nilpotent element  $e\in\ggg$, there is an $\mathfrak{sl}_2$-triple $(e, h, f)$ in $\ggg$,\footnote{Pommerening's result greatly improves the restriction on $p$ in the reference  \cite[\S5.5]{Cart}. In \cite{ST}, Stewart and Thomas further proved that under the assumption $p>2$, {there} is an bijection between  the set of conjugacy classes of $\mathfrak{sl}_2$-triples  and  the set of  nilpotent orbits in $\ggg$ if and only if the odd prime $p>\sfh$ where $\sfh$ is the Coxeter number of $G$. In \cite[Theorem 1.7]{ST}, the existence of $\mathfrak{sl}_2$-triple for a nilpotent element $e$ can be established in the case when { $p\geqslant3$} with some exceptional case only involving $G_2$.}
and { the associated cocharacter $\lambda$ defines a ${\bbz}$-grading $\ggg=\bigoplus_{i\in{\bbz}}{\ggg}(i)$ with $\ggg(i) := \{x \in \ggg \mid \lambda(t).x = t^i x \text{ for all } t \in \bbk^\times\}$.} Such a grading will be called {\it the Dynkin grading}. 

Since $\ggg$ is the Lie algebra of an algebraic group, it is a restricted Lie algebra in a natural way, and we write $x \mapsto x^{[p]}$ for the
$p$th power map.
Under the hypothesis (H3), we have   the $G$-equivariant non-degenerate bilinear form $(\cdot,\cdot)$ on $\ggg$ such that $(e, f)=1$, and define $\chi\in\ggg^*$ by letting $\chi(x)=(e, x)$ for all $ x\in{\ggg}$. Furthermore, we can define an isomorphism $\kappa$ from $\ggg$ to $\ggg^*$ via $(\cdot,\cdot)$.
{ Let $\mathfrak{l}$ be a Lagrangian subspace of $\ggg(-1)$ and $\mathfrak{l}'$ the annihilator of $\mathfrak{l}$ with respect to $(e,[\cdot,\cdot])$ such that $\ggg(-1)=\mathfrak{l}\oplus\mathfrak{l}'$. We further assume that the
isotropic subspaces 
$\mathfrak{l}$ and $\mathfrak{l}'$ are $T$-stable under the adjoint action and $\chi(\mathfrak{l}^{[p]}) = 0$, and
this second condition automatically holds if $p \neq 2$.}
Then we define an admissible nilpotent subalgebra
\begin{align}\label{eq: adm sub}
\mmm:=\mathfrak{l}'\oplus\bigoplus_{i\leqslant-2}\ggg(i)
 \end{align}
 of $\ggg$. { We note that $\mmm$ is a
restricted subalgebra of $\ggg$, and it is stable under the adjoint
action of $T$.}
There exists a subset $\Phi(\mmm) \subset \Phi$ with
$
\mmm=\bigoplus_{\nu\in \Phi(\mmm)}\ggg_\nu,
$ and $\ggg_\nu$ is the corresponding root space of $\ggg$.
Set ${\ppp}:=\bigoplus_{i\geqslant 0}{\ggg}(i)$ and $\tilde{\mathfrak{p}}:={\ppp}\oplus\mathfrak{l}$. Set
\begin{align}\label{eq: gelf gra mod}
Q_{\chi}:=U({\ggg})\otimes_{U(\mathfrak{m})}{\bbk}_\chi.
\end{align}
Let $I_{\chi}$ denote the left ideal of the universal enveloping algebra $U({\ggg})$ of $\ggg$ generated by all $x-\chi(x)$ with $x\in\mathfrak{m}$. Then we further have $Q_{\chi}\cong U(\ggg)/I_\chi$ as ${\ggg}$-modules via the ${\ggg}$-module map sending $1+I_\chi$ to $1_\chi$ where $1_\chi$ is a generator of $Q_\chi$ over $U(\ggg)$.

\subsubsection{}\label{sec: unip M} { The one-parameter subgroup $u_\nu:\bbk\rightarrow G$ associated with the roots $\nu\in \Phi(\mmm)$ generate
a connected unipotent subgroup  $\mathcal{M}$ of $G$ defined over $\mathbb F_p$, stable under conjugation by $T$ and such that $\mmm= \Lie(\mathcal{M})$.}
     It follows from \cite[Lemma 4.1]{GT} that $I_\chi$ is stable under the adjoint action of $\mathcal{M}$. Then
the adjoint action of $\mathcal{M}$ on $Q_\chi$ given by $g.(u+I_\chi)=(g.u)+I_\chi$
for $g\in\mathcal{M}$ and $u\in U(\ggg)$ descends to an adjoint action on {the graded module $\text{gr}(Q_\chi)$ under the Kazhdan grading as defined in \S\ref{221}.}

{
We denote by $\Phi^e \subset X^*(T^e)$ the \emph{restricted root system}: the set of non-zero restrictions $\alpha|_{T^e}$ where
$\alpha \in \Phi$. For $\alpha \in X^*(T^e)$, we write $\ggg_\alpha:=\{x\in\ggg \mid t.x = \alpha(t)x\text{ for all }t \in T^e\}$
for the $T^e$-weight space corresponding to $\alpha$. Incorporating the Dynkin grading, we obtain the decompositions
\begin{equation}\label{rest dec}
\ggg = \bigoplus_{j \in \mathbb{Z}} \ggg_0(j) \oplus \bigoplus_{\substack{\alpha \in \Phi^e \\ j \in \mathbb{Z}}} \ggg_\alpha(j),
\end{equation}
where $\ggg_\alpha(j):=\ggg_\alpha\cap\ggg(j)$.

The space $[\ggg, e]$ is stable under the action of $T^{[e]}$, so
decomposes in the way as \eqref{rest dec}.  We may
choose a $T^{[e]}$-stable complement $\v$ to $[\ggg,e]$ in $\ggg$. 
Since $\ggg_e$ is orthogonal to $[\ggg,e]$ with respect to $(\cdot,\cdot)$, we see that the map $\ggg_e  \to \v^*$ given by $x \mapsto (x,\cdot)$ is an isomorphism, or in other words that the form $(\cdot, \cdot)$ restricts to a non-degenerate pairing
\begin{equation*} \label{e:vdual}
\ggg_e \times \v \to \bbk,
\end{equation*} i.e., $\v$ is dual to $\ggg_e$ via $(\cdot,\cdot)$.}

Define the finite $W$-algebra associated with the pair $(\ggg,e)$ as the invariants of $Q_\chi$ under $\calm$-action, i.e.,
\begin{equation}\label{equiW}
\cw({\ggg},e):=Q_\chi^{\mathcal{M}}.
\end{equation}
This is a $\bbk$-algebra (see for example, \cite[Lemma 4.2]{GT}).
The finite $W$-algebra $\cw(\ggg,e)$ over $\bbk$ can be regarded as 
{ a filtered deformation of the coordinate algebra of a so-called
{\it good transverse slice} $e+\v$ to the nilpotent orbit of $e$ (see \cite[Theorem 5.2]{GT}) as was introduced by Spaltenstein
in \cite{Spa}.}


\subsubsection{}\label{sec: 0.3.3}
Denote by $Z(\ggg)$ the center of $U(\ggg)$.
Let $\varphi$ be the natural representation of $\ggg$ over $Q_\chi$, which induces a $\mathds{k}$-algebra homomorphism from $Z(\ggg)$ to the so-called extended finite $W$-algebra
$$U(\ggg,e):=(\text{End}_{{\ggg}}Q_{\chi})^{\text{op}}$$
 such that $(\varphi(x))(1_\chi)=x.1_\chi\in Q_\chi$ for any $x\in Z(\ggg)$.
  Let $Z_p(\ggg)$ be the $p$-center of $U(\ggg)$, which is generated by  elements $x^p-x^{[p]}$ with $x\in\ggg$.  
Then we have $\bbk$-algebra isomorphisms $\varphi(Z_p(\ggg))\cong\varphi(Z_p(\tilde{\mathfrak{p}}))\cong Z_p(\tilde{\mathfrak{p}})$ (see the proof of \cite[Theorem 2.1]{Pre3}). Now
we identify $Z_p(\tilde{\mathfrak{p}})$ with $\varphi(Z_p(\tilde{\mathfrak{p}}))$ and $\varphi(Z_p(\ggg))$.
Define its invariant subalgebra $Z_p(\tilde{\mathfrak{p}})^{\mathcal{M}}$ under the action of $\Ad\,\mathcal{M}$ as in \S\ref{sec: unip M} and in \S\ref{p-center} for more precise description.

  Denote by $r$ the rank of $\ggg$. Recall that under the hypotheses  (H1)-(H3), by Veldkamp's theorem \cite{Ve,MiRu,BGo} we know that $Z(\ggg)$ is generated by the $p$-center $Z_0(\ggg):=Z_p(\ggg)$ and the Harish-Chandra center $Z_1(\ggg):=U(\ggg)^G=\{u\in U(\ggg)\mid g.u=u~\text{for all}~\ggg\in G\}$
of $U(\ggg)$. Moreover, there exist algebraically independent generators $g_1,\ldots,g_r$ of $U(\ggg)^G$ such that $Z(\ggg)$ is a free $Z_p(\ggg)$-module of rank $p^r$ with a basis consisting of all $g_1^{t_1}\cdots g_r^{t_r}$ with $0\leqslant t_k\leqslant p-1$ for all $k$.

\subsubsection{} Let $Z(\cw)$ denote the center of  $\cw({\ggg},e)$.
Write $Z_0(\cw):=Z_p(\tilde{\mathfrak{p}})^{\mathcal{M}}$, which lies in $Z(\cw)$ (see \S\ref{sec: p center more}), called the $p$-center of $\cw({\ggg},e)$. Note that the image of $Z_1(\ggg)$ under the map $\varphi$ 
also lies in $Z(\cw)$, and we denote it by $Z_1(\cw)$. Set $f_i:=\varphi(g_i)$ with $1\leqslant i\leqslant r$, which are all in $Z_1(\cw)$.
%

\subsubsection{} Denote by $\Lambda_r:=\{(i_1,\ldots,i_r)\mid i_j\in\{0,1,\ldots,p-1\}\}$ with $1\leqslant j\leqslant r$.
In the present paper, we will show that the following main results in \cite{SZ} for $p\gg0$ are still true with the weakened condition on $p$.

\begin{theorem}
\label{central thm} Under the hypotheses (H1)-(H3) in \S\ref{sec: good grading}, 
the following statements  hold.
\begin{itemize}
\item[(1)] The $\mathds{k}$-algebra $Z(\cw)$ is generated by  $Z_0(\cw)$ and $Z_1(\cw)$. More precisely, $Z(\cw)$ is a free module of rank $p^r$ over $Z_0(\cw)$ with basis $f_1^{t_1}\cdots f_r^{t_r}$, where $(t_1,\ldots,t_r)$ runs through $\Lambda_r$.
\item[(2)] The multiplication map $\mu:~Z_0(\cw)\otimes_{Z_0(\cw)\cap Z_1(\cw)} Z_1(\cw)\rightarrow Z(\cw)$ is an isomorphism of
$\mathds{k}$-algebras.
\item[(3)] The locus of points in $\Specm(Z(\cw))$ that occur as $Z(\cw)$-annihilators of irreducible $\cw(\ggg,e)$-module of maximal dimension coincides with the smooth locus consisting of smooth  points in $\Specm(Z(\cw))$.
\end{itemize}
\end{theorem}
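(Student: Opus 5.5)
The plan is to rerun the argument of \cite{SZ}, replacing every appeal to reduction modulo $p$ by the Goodwin--Topley description $\cw(\ggg,e)=Q_\chi^{\mathcal M}$ together with its PBW-type structure over $\bbk$ \cite[Theorem 5.2]{GT}. The basic object is the Kazhdan filtration. By \cite[Theorem 5.2]{GT}, $\gr\cw(\ggg,e)\cong\bbk[e+\v]=S(\ggg_e)$ as graded algebras, so $\cw(\ggg,e)$ has PBW generators $\Theta_1,\dots,\Theta_s$ with $s=\dim\ggg_e$, whose symbols are a basis of $\ggg_e$.

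\textbf{Step 1: $p$-center.} First I describe $Z_0(\cw)=Z_p(\tilde{\mathfrak p})^{\mathcal M}$. The isomorphism $\varphi(Z_p(\ggg))\cong Z_p(\tilde{\mathfrak p})\cong\bbk[(\chi+\mmm^\perp)^{(1)}]$ turns $\mathcal M$-invariants into functions on the Frobenius twist of the slice. The $\mathcal M$-equivariant isomorphism $\mathcal M\times(e+\v)\to e+\mmm^\perp$ (the Kazhdan-contracting argument of \cite{GT}, valid under (H1)--(H3)) then gives $Z_0(\cw)\cong\bbk[(e+\v)^{(1)}]$, a polynomial algebra in $s$ variables. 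Its generators $\Theta_i^p+\text{lower}$ have symbols $\gr\Theta_i^p$. It follows that $\cw(\ggg,e)$ is a free $Z_0(\cw)$-module of rank $p^s$, with basis the restricted PBW monomials.

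\textbf{Step 2: part (1).} I pass to associated graded algebras. $\gr Z(\cw)\subseteq S(\ggg_e)^{\ggg_e}$, where the invariants are taken for the Poisson structure. The symbols of $f_i=\varphi(g_i)$ are the restrictions $\bar g_i$ to $e+\v$ of the basic invariants of $S(\ggg)^G$. Under (H1)--(H3) the restriction $\bbk[\ggg]^G\to\bbk[e+\v]$ is injective, and the $\bar g_i$ together with $\bbk[(e+\v)^{(1)}]$ generate a subalgebra that is free of rank $p^r$ over the $p$-th powers. This is the modular Kostant/Veldkamp statement transported to the slice: I would check that $d\bar g_1,\dots,d\bar g_r$ are linearly independent at generic points of $e+\v$, using that $e+\v$ meets the regular locus transversally and that the differentials of the $g_i$ are independent on $\ggg_{\rm reg}$ for good $p$. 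To show that this subalgebra is all of $\gr Z(\cw)$, I compare degrees over $\Frac Z_0$. Since $\cw$ is a prime PI algebra, finite over $Z_0$, with PI degree $p^{(s-r)/2}$, one gets $[\Frac Z(\cw):\Frac Z_0]=p^{r}$. A graded normality argument then shows that the symbols of the $f^{t}$ span $\gr Z(\cw)$ over $\gr Z_0$, and lifting through the filtration gives (1). Freeness of the $f^t$ comes from the freeness of their symbols.

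\textbf{Step 3: parts (2) and (3).} Part (2) follows from (1) in the same way as Veldkamp's theorem. $Z_1(\cw)$ is a polynomial algebra in the $f_i$. $Z_0\cap Z_1$ is the polynomial algebra in $\varphi$ of the $p$-th power invariants, over which $Z_1$ is free with basis $f^t$, $t\in\Lambda_r$. Hence both sides of $\mu$ are free $Z_0$-modules with matching bases, and $\mu$ is an isomorphism.

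For (3) I follow Brown--Goodearl \cite{BGl}. I must check that $\cw(\ggg,e)$ is Auslander-regular and Cohen--Macaulay, which holds because $\gr\cw$ is a polynomial ring. It is also finite over its center, which is normal since it is a free extension as in (1) of a polynomial ring and is a complete intersection. \cite{BGl} then identifies the Azumaya locus with the smooth locus once the non-Azumaya locus in the smooth part is shown to have codimension $\ge2$. That codimension bound can be obtained by comparing with Premet's bound on the dimensions of reduced-$p$-character $\cw$-modules, through the Morita-type equivalence $U_\eta(\ggg)\sim\cw_\eta$ (Premet/Goodwin--Topley). This reduces it to the Veldkamp--Brown--Goodearl result for $U(\ggg)$ under (H1)--(H3). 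Equivalently, one may compare the two spectra via $\Specm Z(\cw)\cong(e+\v)^{(1)}\times_{\mathfrak h^{*(1)}/W}\mathfrak h^*/W$.

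\textbf{Main obstacle.} I expect the main obstacle to be Step 2, namely showing that the $\gr f_i$ together with $\gr Z_0$ exhaust $\gr Z(\cw)$ without the characteristic-zero input used in \cite{SZ}. I would first try the PI-degree count combined with normality of $\bbk[e+\v]^{(1)}[\bar g_1,\dots,\bar g_r]$. That normality comes from Serre's criterion, using that the singular locus of the map $e+\v\to\ggg/\!/G$ has codimension $\ge2$ for good $p$.
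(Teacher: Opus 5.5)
Your proposal is correct and follows essentially the paper's proof. The steps match: equality of fraction fields via the degree count $[\Frac Z(\cw):\Frac Z_0(\cw)]=p^r$, then integrality, then normality of $\gr\tilde{\cz}=\gr Z_0(\cw)[\bar g_1,\dots,\bar g_r]$ by Serre's criterion, then (2) by matching free bases and (3) via Brown--Goodearl. For Serre's criterion, freeness (hence complete intersection) comes from the Jacobian rank at regular points as in Lemma~\ref{lemma: 2.3}, and smoothness holds off the codimension-$\geqslant 2$ set $\cs\setminus\cs_\reg$ of Proposition~\ref{prop: 2.2}.

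There are only minor slips to fix:
\begin{itemize}
\item In (3), the codimension bound must come from Proposition~\ref{prop: 2.2} on the slice, since non-Azumaya points lie over non-regular $\eta\in\kappa(\cs)$. It does not follow from the $U(\ggg)$ result alone.
\item Normality of $Z(\cw)$ should be taken from Proposition~\ref{PI and Central S}(2), not from freeness plus complete intersection.
\item $\gr Z(\cw)$ lies in the Poisson centre of the slice bracket on $\bbk[\cs]$, not in $S(\ggg_e)^{\ggg_e}$; this is harmless since you never use it.
\end{itemize}
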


The proof of the above theorem is given in \S\ref{sec: proof 01}.

\subsection{Zassenhaus varieties $\Specm(Z({\cw}))$}
 As usual, we always denote by $\Specm(R)$ the maximal spectrum for a commutative $\bbk$-algebra $R$ (the spectrum of maximal ideals of $R$).

The maximal spectrum of the center subalgebra of finite $W$-algebra $\cw({\ggg},e)$ is called  the Zassenhaus variety  just in  the same way as in the case of modular Lie algebras.
\subsubsection{The description of the Zassenhaus varieties}

In the remaining part  we first obtain a precise description of  the Zassenhaus variety $\Specm(Z(\cw))$ for the finite {$W$-algebras.}

\begin{theorem}\label{thm: main 2} There is an isomorphism of schemes
  \begin{align}\label{eq: fiber product-1}
 \Specm(Z(\cw)){\overset\cong\longrightarrow} \kappa(\cs)^{(1)}
 \times_{\calv}\hhh^*\slash W,
  \end{align}
 where { $\cs=e+\v$ (see (\ref{eq: S})),} the superscript $(\cdot)^{(1)}$ denotes Frobenius twist (see \S\ref{sec: Frob twist} for the definition),
$\hhh$ is a given maximal toral subalgebra of $\ggg$ and $W$ denotes the Weyl group of $\ggg$, and $\calv$ is the affine variety defined by $Z_0({\cw})\cap Z_1({\cw})$ which turns out to be a polynomial ring of rank $r=\dim \hhh$.
\end{theorem}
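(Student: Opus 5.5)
The plan is to use Theorem \ref{central thm}(2): $Z(\cw)\cong Z_0(\cw)\otimes_{Z_0(\cw)\cap Z_1(\cw)}Z_1(\cw)$. Taking maximal spectra turns this tensor product into a fibre product,
$$\Specm(Z(\cw))\cong \Specm(Z_0(\cw))\times_{\calv}\Specm(Z_1(\cw)),$$
where $\calv=\Specm(Z_0(\cw)\cap Z_1(\cw))$. It then remains to identify the three factors with the objects in (\ref{eq: fiber product-1}). We must also check that the two structure maps to $\calv$ are the ones induced by the inclusions of $Z_0(\cw)\cap Z_1(\cw)$.

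\textbf{Step 1 ($Z_1$ factor).} The map $\varphi$ restricted to $U(\ggg)^G$ is injective. I would prove this by passing to the Kazhdan-graded level: $\gr\varphi$ becomes restriction of $S(\ggg)^G\cong\bbk[\ggg^*]^G$ to the slice $\kappa(\cs)$. Since $e+\v$ is a good transverse slice, it meets every regular orbit (Goodwin--Topley \cite{GT}), so this restriction is injective. Hence $Z_1(\cw)\cong U(\ggg)^G$. Under (H1)--(H3) the Harish-Chandra isomorphism gives $U(\ggg)^G\cong S(\hhh)^{W\cdot}$, a polynomial ring in $r$ variables, so $\Specm Z_1(\cw)\cong\hhh^*/W$.

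\textbf{Step 2 ($Z_0$ factor).} We have $Z_0(\cw)=Z_p(\tilde\ppp)^{\calm}$, and $Z_p(\tilde\ppp)$ is identified with $\bbk[(\chi+\mmm^\perp)]^{(1)}$ via $x^p-x^{[p]}\mapsto x^p$ (Frobenius twist). The $\calm$-invariants should be computed by the Goodwin--Topley/Premet adjoint-action isomorphism $\calm\times(\chi+\kappa(\v))\xrightarrow{\sim}\chi+\mmm^\perp$. Taking its Frobenius twist gives $\bbk[\chi+\mmm^\perp]^{(1)\,\calm}\cong\bbk[\kappa(\cs)]^{(1)}$, so $\Specm Z_0(\cw)\cong\kappa(\cs)^{(1)}$. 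To know these invariants are exactly $Z_0(\cw)$, I would invoke the description of the $p$-center in \S\ref{p-center} and \S\ref{sec: p center more}.

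\textbf{Step 3 ($\calv$).} Veldkamp's theorem makes $Z(\ggg)$ free over $Z_p(\ggg)$ with basis the monomials $g^t$. Consequently $Z_p(\ggg)\cap U(\ggg)^G=\bbk[g_1^p-\cdots]$ in suitable form, namely $(S(\ggg)^G)^{(1)}$, via the Frobenius map $\bbk[\ggg^*]^{G}\to Z_p(\ggg)^G$. I would show this passes through $\varphi$, and that $\varphi$ stays injective on it by the same slice argument. This makes $Z_0(\cw)\cap Z_1(\cw)$ a polynomial ring in $r$ generators, so $\calv\cong\bba^r$, realised both as $\ggg^{*(1)}/\!/G$ and as $(\hhh^*/W)^{(1)}$ (through Artin--Schreier-type maps). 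The structure maps are then $\kappa(\cs)^{(1)}\to\ggg^{*(1)}/\!/G$, the adjoint quotient restricted to the slice, and $\hhh^*/W\to\calv$, induced by $\lambda\mapsto\lambda^p-\lambda^{[p]}$.

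\textbf{Main obstacle.} The delicate point is Step 3 together with the injectivity claims. We must show that the intersection $Z_0(\cw)\cap Z_1(\cw)$ inside $\cw$ is exactly the image of $Z_p(\ggg)^G$, and not something larger created by $\varphi$. For this I would use the freeness in Theorem \ref{central thm}(1): the monomials $f^t$, $t\in\Lambda_r$, form a $Z_0(\cw)$-basis. Hence an element of $Z_1(\cw)$ lies in $Z_0(\cw)$ only if its expansion in the $g^t$ basis has support at $t=0$, and this pins down the intersection.
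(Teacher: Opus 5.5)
Your proposal is correct and follows the paper's route. The paper also reads the statement off from the tensor decomposition in Theorem \ref{central thm}(2). It identifies $\Specm(Z_0(\cw))$ with $\kappa(\cs)^{(1)}$ via the Goodwin--Topley slice isomorphism (\ref{eq: Spec Z0}), $\Specm(Z_1(\cw))$ with $\hhh^*/W$ via injectivity of $\varphi$ on $U(\ggg)^G$ and Harish-Chandra (\ref{eq: Spec Z1}), and $\calv$ via Lemma \ref{lem: intersec of two centers}.

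The only difference is in that last step. The paper defers Lemma \ref{lem: intersec of two centers} to \cite[\S5.3]{SZ}, whereas you derive $Z_0(\cw)\cap Z_1(\cw)=\varphi(Z_p(\ggg)^G)$ directly. You expand $u\in U(\ggg)^G$ over $Z_p(\ggg)^G$ in the monomials $g^t$ and compare coefficients in the $Z_0(\cw)$-basis $f^t$ of Theorem \ref{central thm}(1). That argument is valid and not circular.
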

The proof of this theorem is given in \S\ref{sec: main them 3}. It is worth mentioning that $\cs$ is isomorphic to $\v$ as an affine variety (\S\ref{sec: slodowy trans}).

\subsubsection{The rationality of $\Specm(Z(\cw))$} Recall that an irreducible affine algebraic variety $X$ over a field $\bfk$ is called rational if the fraction field $\text{Frac}(\bfk[X])$ is purely transcendental over $\bfk$. Tange in \cite{T} proved that the Zassenhaus variety $Z(\ggg)$  over $\bbk$ under the hypotheses  (H1)-(H3) is an affine rational variety. This confirmed a conjecture raised by J. Alev, and extended his previous work jointly with Premet (see \cite{PT}). Based on the above theorem on the structure of the Zassenhaus variety of $Z(\cw)$, we further verify its rationality. This is to say, we will prove the following result.

\begin{theorem}\label{thm: rationality} The affine variety $\Specm(Z(\cw))$ is a rational variety.
\end{theorem}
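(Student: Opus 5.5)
We will use the fiber product description of Theorem \ref{thm: main 2}, together with the freeness statement in Theorem \ref{central thm}(1), to show that $\Frac(Z(\cw))$ is purely transcendental over $\bbk$. Write $\bbk[\cs]^{(1)}\cong Z_0(\cw)$ and $\bbk[\hhh^*]^W\cong Z_1(\cw)$. By Theorem \ref{central thm}(2), $Z(\cw)\cong Z_0(\cw)\otimes_{\bbk[\calv]} Z_1(\cw)$. Here $\bbk[\calv]=Z_0(\cw)\cap Z_1(\cw)$ is a polynomial ring in $r$ variables, generated by the $p$-th powers twisted by Frobenius, essentially $f_i^p-(\text{lower terms})$. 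Since $Z_1(\cw)\cong\bbk[f_1,\dots,f_r]$ is free over $\bbk[\calv]$ with basis $f^{t}$, $t\in\Lambda_r$, the scheme $\scrz$ is a finite flat cover of $\kappa(\cs)^{(1)}$ of degree $p^r$. Irreducibility follows because $Z(\cw)$ is a domain (it is the center of a domain).

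The key step is to find a "generic section" that trivializes the extension of function fields. The plan follows Tange's strategy for $e=0$, transported along the slice. The map $\cs^{(1)}\to\calv\cong\hhh^{*(1)}/W$ is the restriction of the adjoint quotient $\ggg^{*(1)}\to\hhh^{*(1)}/W$ to the good transverse slice. Meanwhile, $\hhh^*/W\to\hhh^{*(1)}/W$ is induced by the Artin–Schreier type map $\lambda\mapsto\lambda^p-\lambda^{[p]}$ on $\hhh^*$.

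We would then replace $\cs$ by a rational open piece on which the fiber product becomes visibly rational. Inside $\cs$, the regular semisimple locus meets an open subset. Over it, using Kostant/Goodwin–Topley type transversality (\cite[Theorem 5.2]{GT}), the map $\mathcal{M}\times\cs\to e+\mmm^\perp$ is an isomorphism, so that $\cs$ is birational to $(e+\mmm^{\perp})/\mathcal{M}$. In the spirit of \cite{T}, the fiber product $\ggg^{*(1)}\times_{\hhh^{*(1)}/W}\hhh^*/W$ is birational to the variety of pairs $(x,\lambda)$ with $\lambda$ an eigenvalue datum lifting the $p$-character. That variety is birational to $G/T\times\hhh^*$ modulo $W$-twisting, with a rational section given by choosing a Borel, i.e.\ $G\times_B(\lambda+\mathfrak n)$.

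Concretely, we aim to show that $\scrz$ is birational to $\cs'\times_{\hhh^{*(1)}}\hhh^*$ for some variety $\cs'$ birational to a Grothendieck-type resolution of $\cs$, namely $\tilde\cs=\{(x,\mathfrak b)\,:\,x\in\cs,\ \kappa(x)\in\mathfrak b\}$. The map $\tilde\cs\to\hhh^*$ is a smooth morphism with rational fibres (affine space bundles over Springer fibres, for the good slice). The fiber product with the Artin–Schreier cover $\hhh^*\to\hhh^{*(1)}$ then simply replaces the base coordinates $\mu$ by coordinates $\lambda$. This yields a variety birational to $\hhh^*\times\bbk^{\dim\cs-r}$, which is rational.

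The main obstacle is the middle step: proving that the Grothendieck-type cover $\tilde\cs\to\cs$ is birational onto its image of degree $|W|$, and that $\tilde\cs$ is rational over $\hhh^*$ via a generic section. For this we would first check generic smoothness and flatness of $\cs\to\hhh^*/W$ under (H1)–(H3), following \cite[\S5]{GT} and Premet's transversality. We would then reduce, via the $\bbk^\times$-contracting action on $\cs$, to the known rationality of $\ggg$'s Zassenhaus variety \cite{T}, recovering \cite{T} in the case $e=0$.
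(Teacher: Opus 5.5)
Your first paragraph is fine, but the argument breaks down at its central step. The missing idea is how to deal with the Weyl group.

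\textbf{The Weyl group cover.} The variety $\tilde\cs=\{(x,\mathfrak b): x\in\cs,\ x\in\mathfrak b\}$ maps birationally onto $\cs\times_{\hhh/W}\hhh$. So $\tilde\cs\to\cs$ is generically a $W$-cover of degree $|W|$, not a birational map; for $e$ regular it is, up to birational equivalence, just $\hhh\to\hhh/W$. A ``rational section given by choosing a Borel'' would split this cover, which is impossible already for $e=0$ or $e$ regular. Consequently $\tilde\cs^{(1)}\times_{\hhh^{*(1)}}\hhh^*$ is birational to $\scrz\times_{\hhh^*/W}\hhh^*$, a $|W|$-sheeted cover of $\scrz$, and not to $\scrz$ itself. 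Rationality of a finite Galois cover does not descend to the quotient in general (Noether's problem is the classical instance). So even a complete proof of your ``middle step'' would not give the theorem.

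For $e=0$ the quotient is harmless only because $\tilde\ggg$ restricted to $\hhh_{\reg}$ is the product $G/T\times\hhh_{\reg}$ with $W$ acting diagonally. The Artin--Schreier base change then returns $(G/T\times\hhh^*)/W$, which is birational to $\ggg^*$; this is the mechanism in \cite{T}. The paper works in the same spirit. It writes $\scrz^\flat=\cC_\cs/\sfn$ and $\kappa(\cs)_{\rss}=\cU_\cs/\sfn$ (Lemmas \ref{lem: key obser}, \ref{osus}, Corollary \ref{cor: the last cor}, Proposition \ref{prop: eq iso 1}). It then compares the $\sfn$-varieties $\cC_\cs$ and $\cU_\cs$ equivariantly (Proposition \ref{prop: eq iso 2}), so the Weyl group quotient is taken on both sides at once. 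Nothing in your proposal plays this role.

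\textbf{The fibration and Artin--Schreier steps.} A smooth morphism with rational closed fibres need not have a generic fibre that is rational over the function field of the base. For $e\neq0$ the family $\tilde\cs\to\hhh$ is in general not a product over $\hhh_{\reg}$, since its fibres $\cs\cap\Ad(G)h$ vary with $h$. So base change along $\mathfrak F$ does not ``simply replace the coordinates $\mu$ by $\lambda$''. Indeed, the Artin--Schreier map does not preserve the linear slice for a fixed conjugating element. Take $\ggg=\mathfrak{sl}_2$, $p>2$, $e$ regular, $\cs=e+\bbk f$, and $\vartheta=\kappa(e+tf)=\Ad^*(g)\kappa(\mathrm{diag}(a,-a))$ with $a\neq0$. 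Let $\lambda=\kappa(\mathrm{diag}(b,-b))$ satisfy $(2b)^p-2b=(2a)^p$, which is the equation defining $\cC_\cs$ for the toral element $\mathrm{diag}(1,-1)$. Then $b\neq a$, and $\Ad^*(g)\lambda=\kappa((b/a)(e+tf))\notin\kappa(\cs)$.

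You should be aware that the same example shows the map $\psi_1$ in the proof of Proposition \ref{prop: eq iso 2} does not land in $\cU_\cs$. In other words, Lemma \ref{lem: Fro closed} cannot hold as stated, and any correct comparison has to let the conjugating element vary.

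\textbf{The reduction to Tange.} The closing reduction to \cite{T} via the contracting $\bbk^\times$-action is not an argument. $\scrz\cong\kappa(\cs)^{(1)}\times_{\ggg^{*(1)}}\Specm(Z(\ggg))$ is a closed subvariety of the Zassenhaus variety of $\ggg$, and rationality does not pass to closed subvarieties.

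\textbf{What would suffice.} One route that avoids the $W$-cover altogether is to prove that the generic fibre of $\varpi_\cs$ is rational over $\bbk(\hhh/W)$. The generic fibre of $\scrz\to\hhh^*/W$ is its Frobenius-twisted base change along the map induced by $\mathfrak F$, and $\bbk(\hhh^*)^W$ is purely transcendental, so this would give the theorem. That is a genuinely new input, and your proposal does not provide it.
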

To verify the rationality of a certain affine variety it is a natural  way to show that this variety  is bi-rationally equivalent to some affine space. So we need to construct an open dense subset of $\Specm(Z(\cw))$ such that  it is isomorphic to an open subset of certain affine space. We roughly take the strategy in \cite{T}. The critical difference lies in
the fiber product decomposition  (\ref{eq: fiber product-1}) of $\Specm(Z(\cw))$ where  certain integrality like
  $$\Specm(Z(\ggg))=\ggg^{*(1)}\times_{\hhh^{*(1)}\slash W}\hhh^*\slash W$$
 in the case of reductive Lie algebras (happening on $\ggg^{*(1)}$) does not seem to be kept. The key point for recovery of such integrality happening on $\kappa(\cs)$ is the fact that $\cs$ contains a nonempty open subset consisting of regular semisimple elements (see Proposition \ref{prop: regular ss open den}).

  The final proof of Theorem \ref{thm: rationality} is given in \S\ref{sec: proof 02}.

{{
\subsection{} When $e=0$,  $\cw:=\cw(\ggg,e)$ is exactly the universal enveloping algebra $U(\ggg)$ of $\ggg$. The Zassenhaus variety $\Specm(Z(\cw))$  coincides with $\Specm(Z(\ggg))$. Hence Theorem \ref{thm: rationality} covers \cite[Theorem 1]{T}, the latter of which asserts the rationality of the Zassenhaus variety of a reductive Lie algebra over $\bbk$.

}}

\section{Finite $W$-algebras and their centers}\label{sec: 1}

In this section, we continue to recall some basic material on finite $W$-algebras and their centers over $\bbk$. Here and throughout $\bbk$ is an algebraically closed field of characteristic $p>0$.

\subsection{The Kazhdan grading of finite $W$-algebras}\label{221}


 As in \S\ref{sec: unip M}, { $\mathcal{M}$ denotes the connected unipotent subgroup of $G$ which is generated by one-parameter {subgroup} $u_\nu:\bbk\rightarrow G$ associated with the roots $\nu\in \Phi(\mmm)$.} 
     It follows from \cite[Lemma 4.1]{GT} that $I_\chi$ is stable under the adjoint action of $\mathcal{M}$. With the identification between $Q_\chi$ and $U(\ggg)\slash I_\chi$,
the adjoint action of $\mathcal{M}$ on $Q_\chi$ {is given by $g.(u+I_\chi)=(g.u)+I_\chi$
for $g\in\mathcal{M}$ and $u\in U(\ggg)$.}
Recall that  the finite $W$-algebra associated with the pair $(\ggg,e)$ is defined as $\cw(\ggg,e)=Q_\chi^{\mathcal{M}}$. So we have
\begin{equation*}\label{W2}
\cw(\ggg,e)=\{u+I_\chi\in Q_\chi\mid g.u+I_\chi=u+I_\chi~\text{for all}~g\in\mathcal{M}\}.
\end{equation*}
 On the other hand,
the adjoint action of $\mathcal{M}$ on $Q_\chi$ induces an action on $(\text{End}_{\ggg}Q_{\chi})^{\text{op}}$ {(the opposite
algebra of the algebra of endomorphisms of $Q_\chi$)}  by $(g.f)(u+I_\chi)=g.(f(g^{-1}.u+I_\chi))$ for $g\in\mathcal{M}$, $f\in\text{End}_{\ggg}Q_{\chi}$, $u\in U(\ggg)$, and the invariant {subalgebra
 is} denoted by $(\text{End}_{\ggg}^\mathcal{M}Q_{\chi})^{\text{op}}$.

By the same discussion as the case over $\bbc$, there exists an isomorphism
$$U({\ggg},e)=(\text{End}_{\ggg}Q_{\chi})^{\text{op}}\cong Q_{\chi}^{\text{ad}\,{\mmm}}$$
by sending $u+I_\chi$ to $u.1_\chi$ for any $u\in U({\ggg},e)$.
From now on, we will identify $(\text{End}_{{\ggg}}Q_{\chi})^{\text{op}}$ with $Q_{\chi}^{\text{ad}\,{\mmm}}$. By  definition, $\cw({\ggg},e)$  can be naturally identified with a subalgebra of the extended finite $W$-algebra $U({\ggg},e)$ over ${\bbk}$. { Denote by $n$ the dimension of $\ggg$. With respect to the Dynkin grading mentioned in \S\ref{sec: good grading}, let $Y_1,\ldots,Y_{n}$ be a homogeneous basis of $\ggg$ with $Y_i\in \ggg(\text{deg}Y_i)$ for $i=1, \ldots, n$.
The Kazhdan filtrations on $Q_\chi$ and also $\cw({\ggg},e)$ can  be defined, arising from the filtration $\{U_j(\ggg)\mid j\in\bbz\}$ of $U(\ggg)$ with $U_j(\ggg)$ spanned by the PBW basis $Y_{i_1}\cdots Y_{i_k}$ with $\sum_{t=1}^k{(\text{deg}Y_{i_t}+2)}{\leqslant} j$ for $1\leqslant i_1\leqslant \ldots\leqslant i_k\leqslant n$}.  The gradings associated with the Kazhdan filtrations are called {\it {the Kazhdan gradings}}.
The adjoint action of $\mathcal{M}$ on $Q_\chi$  
descends to the adjoint action on its graded counterpart $\text{gr}(Q_\chi)$. Write $\text{gr}(\cw({\ggg},e))$ for the gradation of the finite $W$-algebra $\cw({\ggg},e)$, which is a subspace of $\text{gr}(Q_\chi)$.

\subsection{The ring-theoretic property of finite $W$-algebras}\label{ring theory property}
To discuss the related topics on finite $W$-algebras and their subalgebras, we first need the following observation.
\begin{lemma} \label{Noe Pri} Under the hypotheses  (H1)-(H3), the following statements hold.
\begin{itemize}
\item[(1)] 
    The gradation $\text{gr}(\cw({\ggg},e))$ is a unique factorization domains; 
\item[(2)] The finite $W$-algebra $\cw({\ggg},e)$  is a  Noetherian ring;
\item[(3)] $\cw({\ggg},e)$ is a prime ring, which does not contain any zero-divisor.
\end{itemize}
\end{lemma}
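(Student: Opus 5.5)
The plan is to deduce all three statements from the identification of $\text{gr}(\cw(\ggg,e))$ with a polynomial ring. This is the $\bbk$-version of the PBW theorem for finite $W$-algebras, proved by Goodwin and Topley under (H1)--(H3) (\cite[Theorem 5.2]{GT}). It states that, with respect to the Kazhdan filtration, $\text{gr}(\cw(\ggg,e))$ is isomorphic as a graded algebra to the coordinate ring $\bbk[e+\v]\cong S(\v^*)\cong S(\ggg_e)$ of the good transverse slice. Equivalently, $\text{gr}(\cw(\ggg,e))=\text{gr}(Q_\chi)^{\mathcal{M}}\cong \bbk[\chi+\mmm^\perp]^{\mathcal{M}}$. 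Moreover, the adjoint action map $\mathcal{M}\times(\chi+\kappa(\v))\to\chi+\mmm^\perp$ is an isomorphism of affine varieties, so the invariants are the functions on the slice. Granting this, $\text{gr}(\cw(\ggg,e))$ is a polynomial algebra in $\dim\ggg_e$ variables over $\bbk$. A polynomial ring over a field is a UFD, which gives (1).

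For (2) and (3), I would use the standard filtered-to-graded transfer arguments. The Kazhdan filtration on $\cw(\ggg,e)$ is exhaustive and bounded below: the Kazhdan degrees of elements of $\cw(\ggg,e)\subset Q_\chi$ are nonnegative, since $Q_\chi$ has a PBW basis in $\tilde{\ppp}$-monomials of nonnegative Kazhdan degree. Hence it is a separated $\bbz_{\geqslant0}$-filtration. For such filtrations, if $\text{gr}(A)$ is left and right Noetherian, then so is $A$. This holds because one can lift generators of $\text{gr}$ of any one-sided ideal and argue by induction on degree, or cite McConnell--Robson. Since $\text{gr}(\cw(\ggg,e))$ is a finitely generated commutative $\bbk$-algebra, it is Noetherian by Hilbert's basis theorem, giving (2).

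For (3), if $a,b\neq0$ in $\cw(\ggg,e)$ have degrees $i,j$, then their principal symbols $\bar a\in \text{gr}_i$ and $\bar b\in\text{gr}_j$ are nonzero. Their product $\bar a\bar b$ equals the image of $ab$ in $\text{gr}_{i+j}$, and it is nonzero because $\text{gr}$ is a domain. Thus $ab\neq0$, so $\cw(\ggg,e)$ has no zero divisors, and a domain is in particular a prime ring.

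The main obstacle is making sure the PBW/graded statement is genuinely available under (H1)--(H3) rather than only for $p\gg0$, together with the nonnegativity of the filtration on $\cw(\ggg,e)$. I would handle this by citing \cite[Theorem 5.2, Lemma 4.2]{GT} directly, instead of reduction modulo $p$. I would also check that the Kazhdan filtration on $\cw$ is the one induced from $Q_\chi$, so that the map $\text{gr}(\cw)\hookrightarrow\text{gr}(Q_\chi)$ is injective and multiplicative. If only the injection $\text{gr}(\cw)\hookrightarrow \text{gr}(Q_\chi)\cong S(\tilde{\ppp})$ were available, then (3) and the domain property would still follow. In that case, (1) would need the surjectivity onto $\bbk[\chi+\mmm^\perp]^{\mathcal{M}}$ supplied by \cite{GT}.
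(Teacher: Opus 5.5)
Your proposal is correct and takes the same route as the paper. The paper invokes Goodwin--Topley (it cites \cite[Theorem 7.3 and (7.4)]{GT}) for a $\bbk$-algebra isomorphism $\gr(\cw(\ggg,e))\cong S(\ggg_e)$ under (H1)--(H3), then appeals to the standard filtered-to-graded arguments without giving details, and your write-up supplies exactly those omitted details (nonnegativity of the Kazhdan filtration, lifting ideals for Noetherianity, and the principal-symbol argument for the absence of zero divisors), all of which are fine.
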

{\begin{proof}
Under the standard hypotheses (H1)-(H3), 
it follows from \cite[Theorem 7.3 and (7.4)]{GT}
that there exists a $\mathds{k}$-algebras isomorphism $\bar\psi:\gr(\cw(\ggg,e))\xrightarrow\sim S(\ggg_e)$, which
generalizes the case of characteristic $p\gg0$ in \cite[(1.6)]{SZ}.
So the gradation of
$\cw({\ggg},e)$ is isomorphic to a polynomial algebra. Then the standard filtration
arguments work, as these properties hold for the associated graded algebra. More details are omitted here.
\end{proof}}

\subsection{The $p$-centers of finite $W$-algebras}\label{p-center}
\subsubsection{} Let $\varphi$ be the natural representation of $\ggg$ over $Q_\chi$, which induces a $\mathds{k}$-algebras homomorphism from $Z(\ggg)$ to $(\text{End}_{{\ggg}}Q_{\chi})^{\text{op}}$ such that
\[\begin{array}{lcll}
\varphi:&Z(\ggg)&\rightarrow&(\text{End}_{\ggg}Q_{\chi})^{\text{op}}\\ &x&\mapsto&l_x
\end{array}
\]where $l_x(1_\chi)=x.1_\chi\in Q_\chi$ for any $x\in Z(\ggg)$.  The map $\varphi$ will play a critical rule in the { subsequent} arguments.

Recall that the associated graded algebra of $Q_\chi$ under the Kazhdan grading is $\text{gr}(Q_\chi)=\text{gr}(S(\ggg))/\text{gr}(I_\chi)$, and by the PBW theorem we have that $S(\ggg)=S(\tilde{\mathfrak{p}})\oplus\text{gr}(I_\chi)$. As $\text{pr}: S(\ggg)\rightarrow S(\tilde{\mathfrak{p}})$ is the projection along this direct sum decomposition, this restricts to an isomorphism
$\text{gr}(Q_{\chi})\cong S(\tilde{\mathfrak{p}})$.

The adjoint action of $\mathcal{M}$ as defined in \S\ref{221} on $Q_\chi$ descends to an adjoint action on $\text{gr}(Q_\chi)$ and this gives a twisted action of $\mathcal{M}$ on $S(\tilde{\mathfrak{p}})$ defined by $\text{tw}(g).f:=\text{pr}(g.f)$
for $g\in\mathcal{M}$ and $f\in S(\tilde{\mathfrak{p}})$, where $g.f$ denotes the usual adjoint action of $g$ on $f$ in $S(\ggg)$. We write $S(\tilde{\mathfrak{p}})^{\mathcal{M}}$ for the invariants with respect to this action.

 \subsubsection{Frobenius twist}\label{sec: Frob twist}  We denote for a scheme $X$ over $\bbk$ by $X^{(1)}$ the Frobenius twist of $X$, which means  the same
 scheme twisted by the Frobenius morphism, i.e.,
the structure sheaf is exponentiated to the $p$th power (see \cite[\S{I}.9.1]{Jan1} for the details). This notation will be used through the paper.


{ Recall that there is a homomorphism $\mathfrak{f}: S(\ggg^{(1)})\rightarrow Z_p(\ggg)\subset U(\ggg)$ via sending $x\mapsto x^p-x^{[p]}$ for all $x\in\ggg$. This
$\mathfrak{f}$ is a $G$-equivariant isomorphism, which induces a $G$-equivariant isomorphism from $\text{Specm}(Z_p(\ggg))$ to $(\ggg^{(1)})^*$.  Moreover, $\mathfrak{f}|_{S({\hhh}^{(1)})}$ becomes  a homomorphism from $S({\hhh}^{(1)})$ to $U({\hhh})=S({\hhh})$. Thus, $\mathfrak{f}|_{S({\hhh}^{(1)})}$ gives rise to the induced morphism

$$\mathfrak{F}:{\hhh}^*\rightarrow ({\hhh}^{(1)})^*,$$
where the notation  $ ({\hhh}^{(1)})^*$ (resp. $ (\ggg^{(1)})^*$)  will be often simply denoted by ${\hhh}^{(1)*}$ (resp.  $\ggg^{(1)*}$).  We have that $\mathfrak{F}(\lambda)(h)=\lambda(h)^p-\lambda(h^{[p]})$ for all $\lambda\in \hhh^*$ and $h\in\hhh$.
}



Note that  $\ggg^{(1)*}$ is isomorphic to $\ggg^{*(1)}:=(\ggg^*)^{(1)}$.
In the subsequent, $\text{Specm}(Z_p(\ggg))$ is identified  with $\ggg^{(1)*}$ or $\ggg^{*(1)}$ according to the requirement of the context.

\subsubsection{Some {materials} on $Z_0(\cw)$}\label{sec: p center more}
We write $I_p:=\varsigma(\mathfrak{m}^{(1)})Z_p(\ggg)$ for $\varsigma(\mathfrak{m}^{(1)}):=\{x^p-x^{[p]}-\chi(x)^p\mid x\in\mathfrak{m}\}$.
Since the group $\mathcal{M}$ preserves the left ideal $I_\chi$ and $\varsigma$ is $G$-equivariant, hence $\mathcal{M}$ acts on both $U(\ggg,e)\cong Q_{\chi}^{\text{ad}\,{\mmm}}$ and $Z_p(\ggg)/I_p$.
 Let $U(\mathfrak{g},e)^{\mathcal{M}}$ and $(Z_p(\ggg)/I_p)^{\mathcal{M}}$  denote the fixed point subspace of $U(\mathfrak{g},e)$ and $Z_p(\ggg)/I_p$ under the action of $\text{Ad}\,\mathcal{M}$, respectively.

 On the other hand, since $Z_p(\tilde{\mathfrak{p}})\cong\varphi(Z_p(\tilde{\mathfrak{p}}))
 =\varphi(Z_p(\ggg))\cong Z_p(\ggg)/I_p$ by definition, then we have  $Z_p(\tilde{\mathfrak{p}})^{\mathcal{M}}
 \cong (Z_p(\ggg)/I_p)^{\mathcal{M}}$ as $\bbk$-algebras.
Hence, $Z_0(\cw)=Z_p(\tilde{\mathfrak{p}})^{\mathcal{M}}$ {indeed} lies in the center of the finite $W$-algebra $\cw({\ggg},e)$.

Moreover, as stated in \cite[Lemma 4.4]{GT} {(see also \cite[Remark 2.1]{Pre2})} we can identify $\cw(\ggg,e)$ with $U(\ggg,e)^\calm$, i.e., we have
\begin{align}\label{eq: def Z0}
&\cw(\ggg,e) =U(\mathfrak{g},e)^{\mathcal{M}}\;\text{ and }\cr
&\varphi(Z_p(\tilde{\mathfrak{p}}))^{\calm}\cong Z_p(\tilde{\mathfrak{p}})^{\mathcal{M}}=Z_p(\tilde{\mathfrak{p}})\cap \cw(\ggg,e).
\end{align}

Under the weakened restriction on $p$, the results {in} \cite[Proposition 2.1]{SZ} as below  still hold.
\begin{prop}\label{PI and Central S} 
Under the hypotheses  (H1)-(H3), we have
\begin{itemize}
\item[(1)] the ring $Z_0(\cw)$ is  Noetherian;
\item[(2)] the ring $Z(\cw)$ is integrally closed;
\item[(3)] the ring extension $Z(\cw)/Z_0(\cw)$ is integral, and $Z(\cw)$ is finitely-generated as a $Z_0(\cw)$-module. In particular, $Z(\cw)$ is a finitely-generated commutative algebra over $\bbk$;
\item[(4)]$\cw(\ggg,e)$ is a PI ring.
\end{itemize}
\end{prop}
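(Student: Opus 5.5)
The plan is to transfer everything to the graded level and then use standard ring theory. The key structural input is that $\cw(\ggg,e)$ is finitely generated as a module over $Z_0(\cw)$, which in turn is a finitely generated (hence Noetherian) commutative algebra. For this I use the Kazhdan filtration and the isomorphism $\bar\psi:\gr(\cw(\ggg,e))\xrightarrow{\sim}S(\ggg_e)$ from \cite[Theorem 7.3]{GT}, already used in the proof of Lemma \ref{Noe Pri}.

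First I will identify $\gr(Z_0(\cw))$ inside $S(\ggg_e)$. Under the identification $\gr(Q_\chi)\cong S(\tilde{\mathfrak p})$, the element $x^p-x^{[p]}$ has Kazhdan symbol $x^p$, since $x^{[p]}$ lies in strictly lower Kazhdan degree. The twisted $\mathcal M$-action on $S(\tilde{\mathfrak p})$ is compatible with the Frobenius. Hence $\gr(Z_p(\tilde{\mathfrak p})^{\mathcal M})$ contains the $p$-th powers $S(\ggg_e)^p$, via $\bar\psi$ applied to the symbols of the images of $x^p-x^{[p]}$ for a homogeneous basis $x_1,\dots,x_s$ of $\ggg_e$. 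More concretely, I will produce elements $\Theta_1,\dots,\Theta_s\in\cw(\ggg,e)$ with $\gr\Theta_i=x_i$, as in \cite{GT}. I then show that the $p$-th power-type elements $\xi_i\in Z_0(\cw)$ have $\gr\xi_i=x_i^p$. This is the analogue of \cite[Theorem 3.4]{Pre3} and \cite[Lemma 2.2]{SZ}, and it is the main obstacle. Over $p\gg0$ this came from reduction mod $p$; here I would argue via Goodwin--Topley's description of $\gr(Q_\chi^{\mathcal M})$ as the invariants $S(\tilde{\mathfrak p})^{\mathcal M}$, together with the fact that $\gr$ of $Z_p(\tilde{\mathfrak p})^{\mathcal M}$ equals $(S(\tilde{\mathfrak p})^{p})^{\mathcal M}$, which is the Frobenius twist of $\bbk[\cs]\cong S(\ggg_e)$.

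Granting this, $\gr Z_0(\cw)\supseteq\bbk[x_1^p,\dots,x_s^p]$. By a filtration argument, $\cw(\ggg,e)$ is then a free module over the polynomial subalgebra $\bbk[\xi_1,\dots,\xi_s]\subseteq Z_0(\cw)$, with basis the monomials $\Theta^{a}$, $0\le a_i\le p-1$. In particular it is finitely generated of rank $p^s$.

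The four claims then follow in order:
\begin{itemize}
\item[(4)] $\cw(\ggg,e)$ is a finite module over a commutative central subalgebra, hence PI.
\item[(1)] $Z_0(\cw)$ is a submodule of the finite module $\cw(\ggg,e)$ over the Noetherian ring $A:=\bbk[\xi_1,\dots,\xi_s]$. So it is a finite $A$-module, hence Noetherian, and indeed a finitely generated algebra.
\item[(3)] Likewise $Z(\cw)$ is an $A$-submodule of $\cw(\ggg,e)$, hence finite over $A\subseteq Z_0(\cw)$. Thus it is finite, and so integral, over $Z_0(\cw)$, and finitely generated over $\bbk$.
\item[(2)] By Lemma \ref{Noe Pri}, $\cw(\ggg,e)$ is a Noetherian domain that is finite over its center, i.e.\ a PI domain. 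Take $z\in\Frac(Z(\cw))$ integral over $Z(\cw)$. Write $z=a/b$ in the central quotient ring, which is a division algebra $D$. Then $\cw(\ggg,e)[z]$ is a finite $\cw(\ggg,e)$-module inside $D$. The domain $\gr\cw(\ggg,e)\cong S(\ggg_e)$ is integrally closed (a UFD), so by the standard filtered argument (as in \cite{SZ}, via \cite{BGl}) $\cw(\ggg,e)$ is a maximal order. Hence $z\in\cw(\ggg,e)$, and since $z$ commutes with everything, $z\in Z(\cw)$.
\end{itemize}
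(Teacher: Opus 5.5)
Your proposal is correct and follows essentially the route the paper intends, namely transplanting \cite[Proposition 2.1]{SZ}: finiteness of $\cw(\ggg,e)$ over its $p$-centre gives (1), (3) and (4), and the filtered maximal-order argument based on $\gr(\cw(\ggg,e))\cong S(\ggg_e)$ gives (2). The step you single out as the main obstacle need not be redone, because Theorem~\ref{prem}(4) (i.e.\ \cite[Theorem 8.4]{GT}) already says that $\cw(\ggg,e)$ is free of rank $p^{\dim\ggg_e}$ over $Z_0(\cw)$, and $Z_0(\cw)\cong\bbk[\kappa(\v)^{(1)}]$ is a polynomial ring, which gives (1) immediately.
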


\subsection{}\label{for fintecen}

Set $\mathfrak{a}:=\{x\in\tilde{\mathfrak{p}}\mid (x,\v)=0\}$.
The following lemma shows  the relationship between the $p$-centers of $\cw({\ggg},e)$ and $U(\ggg,e)$:
\begin{lemma}({\cite[(8.3)]{GT}})\label{pmge}
There exists an isomorphism between  $\mathds{k}$-algebras
 $$Z_p(\widetilde{\mathfrak{p}})^{\mathcal{M}}\otimes_{\mathds{k}}Z_p(\mathfrak{a})\cong Z_p(\tilde{\mathfrak{p}})$$
 under the multiplication map.
\end{lemma}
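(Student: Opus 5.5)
The plan is to prove the isomorphism $Z_p(\tilde{\mathfrak{p}})^{\mathcal{M}}\otimes_{\bbk}Z_p(\mathfrak{a})\cong Z_p(\tilde{\mathfrak{p}})$ by passing to coordinate rings of varieties. The model is the classical argument that the action map $\mathcal{M}\times(\chi+\kappa(\v))\to\chi+\mmm^\perp$ is an isomorphism.

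First I will identify the $p$-centre with a coordinate ring. Via $\mathfrak{f}$, the ring $Z_p(\tilde{\mathfrak{p}})\cong S(\tilde{\mathfrak{p}}^{(1)})$ is the coordinate ring of $\tilde{\mathfrak{p}}^{(1)*}$. After the identification $\varphi(Z_p(\ggg))\cong Z_p(\ggg)/I_p$, this is the coordinate ring of the affine space $(\chi+\mmm^\perp)^{(1)}\subset \ggg^{*(1)}$. Under this identification the twisted $\mathcal{M}$-action on $Z_p(\tilde{\mathfrak{p}})$ is the coadjoint action on $(\chi+\mmm^\perp)^{(1)}$. This works because $\mathfrak{f}$ is $G$-equivariant and $I_p$ is $\mathcal{M}$-stable.

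Second, I will apply the Goodwin--Topley transversality result underlying \cite[Theorem 5.2]{GT}. It says that the action map
$$\alpha:\mathcal{M}\times(\chi+\kappa(\v))\longrightarrow \chi+\mmm^\perp$$
is an isomorphism of affine varieties. This is proved with the contracting $\bbk^\times$-action built from $\lambda$ and the Kazhdan grading: one checks that the differential at $(1,\chi)$ is bijective and that $\alpha$ is equivariant for a contracting action with positive weights. Taking Frobenius twists, it follows that
$$\bbk[(\chi+\mmm^\perp)^{(1)}]^{\mathcal{M}}\cong \bbk[(\chi+\kappa(\v))^{(1)}]$$
is a polynomial ring. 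Moreover, $\bbk[(\chi+\mmm^\perp)^{(1)}]\cong \bbk[\mathcal{M}^{(1)}]\otimes \bbk[(\chi+\kappa(\v))^{(1)}]$.

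Third, I will match the second tensor factor with $Z_p(\mathfrak{a})$. Since $\mathcal{M}$ is unipotent, $\bbk[\mathcal{M}]$ is a polynomial ring in $\dim\mmm$ variables. The space $\mathfrak{a}=\{x\in\tilde{\mathfrak{p}}\mid(x,\v)=0\}$ has dimension $\dim\tilde{\mathfrak{p}}-\dim\v=\dim\mmm$, using $\dim\tilde{\mathfrak{p}}=\dim\mmm+\dim\ggg_e$. Restricting the linear functions in $\mathfrak{a}^{(1)}$ to $\chi+\kappa(\v)$ gives constants, because $(\mathfrak{a},\v)=0$. So they restrict to coordinates along the $\mathcal{M}$-orbit directions.

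The cleanest formulation is then as follows. The multiplication map $Z_p(\tilde{\mathfrak{p}})^{\mathcal{M}}\otimes Z_p(\mathfrak{a})\to Z_p(\tilde{\mathfrak{p}})$ is a homomorphism of graded polynomial algebras with respect to the Kazhdan grading, and both sides have the same Hilbert series. It therefore suffices to prove injectivity, or surjectivity, on associated graded rings. On the graded level the $\mathcal{M}$-invariants have leading terms forming a polynomial algebra on generators whose linear parts span a complement to $\mathfrak{a}^{(1)}$ in $\tilde{\mathfrak{p}}^{(1)}$, namely a copy of $\ggg_e$ dual to $\v$. Together with $\mathfrak{a}^{(1)}$ this gives a full set of polynomial generators of $S(\tilde{\mathfrak{p}}^{(1)})$.

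I expect the main obstacle to be this last step: showing that the leading terms of invariants have linear parts complementary to $\mathfrak{a}$. My first attempt will be to construct, for each basis vector of $\ggg_e$, an invariant via the isomorphism $\alpha$, namely the pullback of a coordinate on $\chi+\kappa(\v)$ along $\mathrm{pr}_2\circ\alpha^{-1}$. I would then compute that its linear term under the grading is the pairing with the corresponding element of $\v$.
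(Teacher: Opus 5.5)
The paper gives no argument for this lemma; it simply quotes \cite[(8.3)]{GT}, and the only related input appearing in the paper is the transversality statement $\bbk[\chi+\mmm^\perp]^{\mathcal{M}}\cong\bbk[\chi+\kappa(\mathfrak{v})]$ of \cite[Lemma 5.1]{GT}. Your proposal is correct, and it supplies what the citation leaves implicit. After the Frobenius twist the lemma is the commutative statement $S(\tilde{\mathfrak{p}})=S(\tilde{\mathfrak{p}})^{\mathcal{M}}\otimes S(\mathfrak{a})$, and your graded argument derives this from the isomorphism $\alpha$ plus linear algebra. This makes visible that nothing beyond (H1)--(H3) is used; the citation buys brevity by resting on GT's framework.

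Your ``Third'' paragraph should not be read as identifying $\bbk[\mathcal{M}^{(1)}]$ with $Z_p(\mathfrak{a})$. If the pullback of $\bbk[\mathcal{M}]$ along $\mathrm{pr}_1\circ\alpha^{-1}$ equalled $S(\mathfrak{a})$, then $(\mathrm{Ad}(g)v,a)=0$ would hold for all $g\in\mathcal{M}$, $v\in\mathfrak{v}$, $a\in\mathfrak{a}$. This already fails for the regular nilpotent of $\mathfrak{sl}_4$ with $\mathfrak{v}=\ggg_f$ and $p\geq 5$: take $g=\exp(sE_{21})$, $v=f$, $a=E_{13}-E_{24}$. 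So $S(\mathfrak{a})$ is a different complement to the invariants, and only your Hilbert-series/generators argument shows that it works.

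The step you flag as the main obstacle is short. Take a $\lambda$-homogeneous basis $x_1,\dots,x_d$ of $\ggg_e$, set $c_i(\chi+\kappa(v))=(x_i,v)$, and let $\theta_i:=c_i\circ\mathrm{pr}_2\circ\alpha^{-1}$. The $\theta_i$ and a homogeneous basis of $\mathfrak{a}$ are Kazhdan-homogeneous, because $\mathfrak{v}$, and hence $\mathfrak{a}$, is stable under $\lambda\in X_*(T^{[e]})$. Since $\chi|_{\tilde{\mathfrak{p}}}=0$, compare the parts of $\theta_i(\chi+\kappa(v))=(x_i,v)$ of degree one in $v$. This shows that the linear part $\ell_i\in\tilde{\mathfrak{p}}$ of $\theta_i$ satisfies $(\ell_i,v)=(x_i,v)$ for all $v\in\mathfrak{v}$, i.e.\ $\ell_i\in x_i+\mathfrak{a}$. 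In fact $\ell_i=x_i$: $\mathcal{M}$-invariance gives $\chi([\mmm,\ell_i])=0$, which forces $\ell_i\in\ggg_e$.

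Since $\ggg_e\subset\tilde{\mathfrak{p}}$ pairs non-degenerately with $\mathfrak{v}$, we get $\tilde{\mathfrak{p}}=\ggg_e\oplus\mathfrak{a}$. Graded Nakayama then gives surjectivity of the multiplication map, and equality of Hilbert series (or of Krull dimensions) gives injectivity.
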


In \cite[Theorem 8.4]{GT},  Goodwin and Topley introduced the following transition property between finite $W$-algebras and their extended counterparts {(see \cite[Theorem 2.1]{Pre3} and \cite[Remark 2.1]{Pre2} for the case of characteristic $p\gg0$ by Premet).} 
\begin{theorem} (\cite{GT, Pre3, Pre2})\label{prem}
Under the hypotheses  (H1)-(H3), we have
\begin{itemize}
\item[(1)] The algebra ${U}(\ggg,e)$ is generated by its subalgebras $\cw({\ggg},e)$ and $Z_p(\tilde{\mathfrak{p}})$;
\item[(2)] ${U}(\ggg,e)\cong \cw({\ggg},e)\otimes_\bbk Z_p(\mathfrak{a})$ as $\mathds{k}$-algebras;
\item[(3)] $U(\ggg,e)$ is a free  module over $Z_p(\tilde{\mathfrak{p}})$ of rank $p^{\dim\ggg_e}$;
\item[(4)] $\cw({\ggg},e)$ is a free  module over $Z_p(\tilde{\mathfrak{p}})^{\mathcal{M}}$ of rank $p^{\dim  \ggg_e}$.
\end{itemize}
\end{theorem}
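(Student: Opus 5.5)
The proof I would give runs through the Kazhdan filtration, proving (4) first, then (2), and deducing (1) and (3) from (2) together with Lemma \ref{pmge}.

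\textbf{Step 1: the graded algebra $\gr(\cw(\ggg,e))$ and its $p$-centre.} Use the isomorphism $\bar\psi:\gr(\cw(\ggg,e))\xrightarrow{\sim}S(\ggg_e)$ from the proof of Lemma \ref{Noe Pri}. Geometrically, it comes from the $\mathcal{M}$-equivariant isomorphism $\mathcal{M}\times(\chi+\kappa(\v))\to\chi+\mmm^\perp$, through which $S(\tilde{\ppp})\cong\bbk[\chi+\mmm^\perp]$ with the twisted action. Under this identification, $\gr$ of $Z_0(\cw)=Z_p(\tilde{\ppp})^{\mathcal{M}}$ should be exactly the subalgebra of $p$-th powers $S(\ggg_e)^p$. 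One inclusion is clear, since the symbol of $x^p-x^{[p]}$ is $x^p$. For the reverse inclusion I would compare Hilbert series. The symbols of the elements $\varsigma(x)$, $x\in\tilde{\ppp}$, generate $S(\tilde{\ppp})^p$, and the twisted $\mathcal{M}$-action commutes with taking $p$-th powers.

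\textbf{Step 2: proof of (4).} $S(\ggg_e)$ is a free $S(\ggg_e)^p$-module with basis the monomials $y^{\mathbf a}$ with $\mathbf a\in\Lambda_{\dim\ggg_e}$. Choose homogeneous lifts of these monomials in $\cw(\ggg,e)$. A standard filtered argument then shows that the lifts form a $Z_0(\cw)$-basis of $\cw(\ggg,e)$, which gives (4). Spanning follows by induction on the Kazhdan degree, and linear independence by looking at top symbols.

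\textbf{Step 3: proof of (2).} The elements of $Z_p(\mathfrak a)\subset\varphi(Z_p(\ggg))$ are central in $U(\ggg,e)$. Hence multiplication gives an algebra map $\cw(\ggg,e)\otimes_\bbk Z_p(\mathfrak a)\to U(\ggg,e)$. It is injective: on symbols it is the map $S(\ggg_e)\otimes S(\mathfrak a)^p\to S(\tilde{\ppp})$, and transversality of $\mathfrak a$ to the slice coordinates makes this injective. For surjectivity, note that $\gr(U(\ggg,e))\subseteq S(\tilde{\ppp})^{\ad\mmm}$. Under $\bbk[\chi+\mmm^\perp]\cong\bbk[\mathcal{M}]\otimes\bbk[\chi+\kappa(\v)]$, the $\mmm$-invariants are $\bbk[\mathcal{M}]^{\mmm}\otimes\bbk[\chi+\kappa(\v)]=\bbk[\mathcal{M}]^p\otimes S(\ggg_e)$, since $\mathcal{M}$ is a unipotent affine space and its left-translation Lie algebra invariants are the $p$-th powers of coordinates. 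Using Lemma \ref{pmge} to identify $\bbk[\mathcal{M}]^p$ with $\gr Z_p(\mathfrak a)$, the image already has the full graded size in each Kazhdan degree. So $\gr$ of the map is onto, and the map is an isomorphism.

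\textbf{Step 4: proofs of (1) and (3).} Part (1) follows from (2) combined with $Z_p(\tilde{\ppp})=Z_0(\cw)\otimes Z_p(\mathfrak a)$ from Lemma \ref{pmge}. Part (3) follows by tensoring (4) with Lemma \ref{pmge}: $U(\ggg,e)\cong\cw\otimes Z_p(\mathfrak a)$ is free over $Z_0(\cw)\otimes Z_p(\mathfrak a)=Z_p(\tilde{\ppp})$ of rank $p^{\dim\ggg_e}$.

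\textbf{Main obstacle.} The hardest point is controlling the associated graded of invariants. In general one only has $\gr(A^{\mathcal{M}})\subseteq\gr(A)^{\mathcal{M}}$, and in characteristic $p$ this can be strict. Equality requires the slice isomorphism $\mathcal{M}\times\cs\cong\chi+\mmm^\perp$ over $\bbk$ under (H1)--(H3), together with the explicit lifting of invariants of \cite{GT}. I would try to obtain it by producing enough lifts: the $\Theta$-generators of $\cw$ and the $\varsigma$-elements. Then I would conclude by the dimension comparison in each Kazhdan degree described above.
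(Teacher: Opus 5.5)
Your argument is correct, and it differs from the paper, which gives no proof here: it imports the statement from \cite[Theorem 8.4]{GT} (and from \cite{Pre3, Pre2} for $p\gg0$). You instead derive all four parts from three inputs the paper already uses elsewhere:
\begin{itemize}
\item the PBW theorem of \cite{GT}, which you use in its strong form $\gr\cw(\ggg,e)=\bbk[\chi+\mmm^\perp]^{\mathcal{M}}$ inside $\gr Q_\chi$, not merely as an abstract isomorphism $\gr\cw\cong S(\ggg_e)$;
\item the slice isomorphism $\mathcal{M}\times(\chi+\kappa(\v))\cong\chi+\mmm^\perp$;
\item Lemma \ref{pmge}.
\end{itemize}
Beyond these you need one elementary characteristic-$p$ fact, $\bbk[\mathcal{M}]^{\mmm}=\bbk[\mathcal{M}]^p$. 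It lets you squeeze $\gr U(\ggg,e)$ between $\mathrm{im}(\gr\mu)$ and $\bbk[\chi+\mmm^\perp]^{\mmm}$, so $\gr U(\ggg,e)=\bbk[\chi+\mmm^\perp]^{\mmm}$ comes out as a by-product rather than being needed as an input. The citation buys brevity. Your route makes clear that the theorem adds nothing beyond these inputs, and that the real difficulty (your ``main obstacle'') sits entirely inside the GT PBW theorem.

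Two steps can be sharpened.

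\emph{Step 1.} No Hilbert series are needed. The map $x\mapsto x^p-x^{[p]}$ induces an $\mathcal{M}$-equivariant $p$-semilinear ring isomorphism $\bbk[\chi+\mmm^\perp]\to Z_p(\tilde{\ppp})$, since it carries the ideal of $\chi+\mmm^\perp$ onto $I_p$. This map multiplies Kazhdan degrees by $p$, and its composite with the symbol map is $f\mapsto f^p$. Since $\bbk[\chi+\mmm^\perp]^{\mathcal{M}}$ is a graded subalgebra, this gives $\gr Z_0(\cw)=(\gr\cw)^p$ directly.

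\emph{Step 3.} The same isomorphism untwists Lemma \ref{pmge} to $\bbk[\chi+\mmm^\perp]^{\mathcal{M}}\otimes S(\mathfrak{a})\xrightarrow{\sim}\bbk[\chi+\mmm^\perp]$, which is the precise form of your ``transversality''. With it, surjectivity becomes an equality rather than a dimension count. Indeed, $\bbk[\mathcal{M}]^p\otimes\bbk[\chi+\kappa(\v)]$ is generated by $\bbk[\chi+\mmm^\perp]^p$ and the $\mathcal{M}$-invariants. Moreover $\bbk[\chi+\mmm^\perp]^p\cdot\bbk[\chi+\mmm^\perp]^{\mathcal{M}}=S(\mathfrak{a})^p\cdot\bbk[\chi+\mmm^\perp]^{\mathcal{M}}=\mathrm{im}(\gr\mu)$.

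Finally, in your last paragraph the $\mathcal{M}$-action on $\gr Q_\chi$ must be the coadjoint action on $\chi+\mmm^\perp$. This action preserves the Kazhdan filtration but not the grading. The action literally induced on graded pieces is trivial, since $\Ad u_\nu(s)-1$ strictly lowers Kazhdan degree for every $\nu\in\Phi(\mmm)$.
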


\section{{ Good transverse} slices}\label{sec: slo sli}
Maintain the notations and assumptions as before. In particular, we suppose the hypotheses (H1)-(H3) for $G$ unless other stated. Then these assumptions are naturally satisfied for any Levi subgroups.

\subsection{{ Good transverse} slices}\label{Slo}
    Now we  need to talk more related to { good transverse} slices, continuing the introduction in \S\ref{sec: unip M}  (see \cite{Spa}, or \cite{GT}, \cite[{\S7}]{Jan3},  \cite{Pre1} for further details). 

          {We can identify $S(\ggg)$ with the algebra $\mathds{k}[\ggg^*]$ of regular functions on the affine variety $\ggg^*$.
Let $\mathfrak{m}^\perp$ denote the set of all linear functions on $\ggg$ vanishing on $\mmm$, i.e., ${\mmm}^\perp:=\{f\in\ggg^*\mid f(\mmm)=0\}$.
Then $\text{gr}(I_\chi)$ is
the ideal of all functions in $\mathds{k}[\ggg^*]$ vanishing on the closed subvariety $\chi+{\mmm}^\perp$ of $\ggg^*$.
In this way, we  have {the identification}
$\text{gr}(Q_{\chi})\cong\mathds{k}[\chi+\mathfrak{m}^\perp]$, and then $S(\tilde{\mathfrak{p}})\cong\mathds{k}[\chi+\mathfrak{m}^\perp]$.}
\subsubsection{}
 %
As we identify $S(\tilde{\mathfrak{p}})$ with $\mathds{k}[\chi+\mathfrak{m}^\perp]$, we similarly regard the $\mathcal{M}$-algebra $Z_p(\tilde{\mathfrak{p}})$ as the coordinate algebra of the Frobenius twist $(\chi+\mathfrak{m}^\perp)^{(1)}\subseteq(\ggg^*)^{(1)}$ of $\chi+\mathfrak{m}^\perp$, where the natural action of $\mathcal{M}$ on $(\chi+\mathfrak{m}^\perp)^{(1)}$ is a Frobenius twist of the coadjoint action of $\mathcal{M}$ on $\chi+\mathfrak{m}^\perp$.

   \subsubsection{Further description of $\cz_0(\cw)$ via { good transverse} slices} \label{further desc of Z0} Define the Killing isomorphism   $\kappa$ from $\ggg$ to $\ggg^*$  via sending $x\in\ggg$ to $(x,\cdot)$.
       As $\mathds{k}[\chi+\mathfrak{m}^\perp]^{\mathcal{M}}\cong\mathds{k}[\chi+\kappa(\v)]$ by \cite[Lemma 5.1]{GT} or \cite[Lemma 3.2]{Pre3}, write
       \begin{align*}
       Z_p(\tilde{\mathfrak{p}})^{\mathcal{M}}:
=\mathds{k}[(\chi+\mathfrak{m}^\perp)^{(1)}]^{\mathcal{M}}
=\mathds{k}[(\chi+\kappa(\v))^{(1)}],
 \end{align*}
  which is, under the Killing isomorphism, the function algebra on the Frobenius twist of $\kappa(\cs)$ for
  \begin{align}\label{eq: S}
\cs:=e+\v.
\end{align}
{ Note that $\cs$ and $\v$ are isomorphic as affine varieties.  We can regard $\cs$ as a good transverse slice, instead of $\v$,  for the description of $\cz_0(\cw)$.  }

So there is an isomorphism $\phi$ of affine schemes between $\text{Specm}(Z_0(\cw))$ and  the variety ${\cs}$ arising from $\ggg^{(1)*}$ (this $\phi$ is in the same sense as in \S\ref{sec: Frob twist}). More precisely,
\begin{align*}
\text{Specm}(Z_0(\cw))=\tilde\ppp^{(1)*}\slash \cm= \kappa(\cs)^{(1)}.
\end{align*}


 \subsection{Invariants and quotient maps associated with { good transverse} slices}\label{sec: invariant gen}

\subsubsection{}\label{sec: regular ele}
 Let us first recall some basic material on  $G$-invariants of $S(\ggg^*)$ and regular elements of $\ggg$ (see for example,  \cite[\S7.13]{Jan3} or \cite[Ch.4]{Var}).

Recall that an element $X\in\ggg$ is said to be regular if the adjoint orbit $\Ad(G).X$ has the maximal dimension among the adjoint orbits,  which is equal to $2N$, where $N=\#\Phi^+$ denotes the number of positive roots of $G$. Equivalently, the dimension of $\ggg_X$ 
is the smallest one among all centralizers, which is equal to $r$. Denote by $\ggg_\reg$ the set of regular elements of $\ggg$.

Denote by
$$P_X(t) = \det(t.1 -\ad X)$$
the characteristic polynomial of $\ad X$ in the variable $t$. Then we have
\begin{align*}
P_X(t) =\sum_{j=0}^{n} (-1)^{n-j} F_j(X)t^{j},
\end{align*}
where $n=\dim\ggg$ and $F_n\equiv 1$, and all $F_j$ are polynomial function on $\ggg$. All $F_j$ are $\Ad(G)$-invariants. Furthermore, $S(\ggg^*)^G$ are generated by those $F_j$. Note that $\det(\ad X)=0$, and then $F_0\equiv 0$. The smallest integer $t> 0$ such that $F_t\not\equiv0$ is equal to $r$, the rank of $\ggg$. 
Then an element $X\in\ggg$ is regular semisimple if and only if $F_r(X)\ne 0$. The regular semisimple elements constitute an open subset of $\ggg$, which is denoted by $\ggg_{\text{rss}}$.

 \subsubsection{}\label{Res}
 Recall that $r=\rank\,\ggg$, and we  already  assume the hypotheses (H1)-(H3). By Chevalley restriction theorem (see for example, \cite[\S7.12]{Jan3}), there are an isomorphism  $\textsf{Res}: S(\ggg^*)^G \cong S(\hhh^*)^{W}$ as $\bbk$-algebras for any given Cartan subalgebra $\hhh$ of $\ggg$ and the abstract Weyl group $W$ of $\ggg$.
The map $\textsf{Res}$ is actually the restriction of the natural map $\Upsilon: S(\ggg^*)\rightarrow S(\hhh^*)$, sending $f\in S(\ggg^*)$ to $f|_{S(\hhh)}$. Consequently, there is an isomorphism $\textsf{res}$ of varieties:
\begin{align}\label{eq: variHC}
\ggg\slash\hskip-3pt\slash G{\overset{\cong}{\longrightarrow}}\hhh\slash W.
\end{align}
 Then we have the morphism $\zeta:\ggg\rightarrow  \hhh\slash W$. Set $\varpi_\cs:=\zeta|_\cs$. {{Note that $U(\ggg)^G\cong U(\hhh)^{W}$ which are isomorphic to a polynomial ring (see, for example \cite[Lemma 3.3]{BGo} or  \cite[\S9.6]{Jan2}).
      There exists   a set of algebraically independent generators $\{g_1,\ldots,g_r\}$
      in $U(\ggg)^G$ ($\cong U(\hhh)^{W}$).
     Let $\text{gr}_S(g_i)=\tilde g_i$ be the images of $g_i$ associated to the standard grading of $U(\ggg)$. }}
     %
     %
     %
 %
   We can further choose such $g_i$ with $i=1,\ldots,{r}$ such that   the elements $\kappa(\tilde g_1),\ldots,\kappa(\tilde g_r)$ form a free generating set 
     of $S(\ggg^*)^G$.

Note that under above assumption, we have the adjoint quotient map:
\begin{equation*}\label{Steinberg map}
\nu:\ggg\rightarrow \bba^r
\end{equation*}
which sends $x\in\ggg$ to $(\kappa(\tilde g_1)(x),\ldots,\kappa(\tilde g_r)(x))$. Consider the composite of $\nu$ and the canonical isomorphism $\eta: \bba^r\rightarrow \hhh\slash W$. Then we have $\varpi_\cs=\eta\circ\nu|_\cs$.
In virtue of \cite[\S7.3-7.4]{Slo} and \cite[Theorem 5.4 and Remark 5.4]{Pre1}, we have the following result.
    \begin{lemma}\label{Tran sli}
     Under the hypothesis (H3) in \S\ref{sec: good grading}, we consider the restriction of $\zeta$ to $\cs$, i.e.,
    $$\varpi_\cs: \cs\rightarrow  \hhh\slash {W}.$$
The morphism $\varpi_\cs$ is faithfully flat.
\end{lemma}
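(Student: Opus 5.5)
The plan is to use the standard flatness criterion for a morphism between smooth (hence Cohen--Macaulay) irreducible varieties, often called ``miracle flatness'': such a morphism is flat as soon as every nonempty fibre has dimension $\dim\cs-\dim \hhh\slash W$. Surjectivity then upgrades flatness to faithful flatness.

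\textbf{Source and target.} The slice $\cs=e+\v$ is an affine space of dimension $\dim\ggg_e$, so it is smooth and irreducible. By \S\ref{Res}, the quotient $\hhh\slash W\cong\bba^r$ via $\eta$, and it suffices to treat $\nu|_\cs:\cs\to\bba^r$.

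\textbf{Fibre dimension via the contracting action.} Use the $\bbk^\times$-action $\rho(t)x=t^2\lambda(t^{-1}).x$. Since $\v$ is $T^{[e]}$-stable and $\lambda$ lies in $T^{[e]}$, this action preserves $\cs$. Choosing $\v$ graded (as one may, since it is $T^{[e]}$-stable), the action contracts $\cs$ to $e$ as $t\to0$, because $\v\subset\bigoplus_{i\le 0}\ggg(i)$ is the standard choice dual to $\ggg_e\subset\bigoplus_{i\ge0}\ggg(i)$.

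The map $\nu$ is equivariant when $\bba^r$ carries the positive weights $2\deg\tilde g_i$. Hence the dimension of the fibre through $0$, namely $\cs\cap\mathcal N$ where $\mathcal N$ is the nilpotent cone, bounds all fibre dimensions by upper semicontinuity combined with the contraction. So it is enough to show
$$\dim(\cs\cap\mathcal N)\le \dim\ggg_e-r.$$

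\textbf{Bounding the special fibre.} Use transversality: the action map $G\times\cs\to\ggg$ is smooth at $(1,e)$ since $\ggg=[\ggg,e]\oplus\v$. By the contracting action it is then smooth on all of $G\times\cs$, since the smooth locus is open and $\bbk^\times$-stable and contains $G\times\{e\}$. Smoothness forces each intersection $\cs\cap\co$ with a nilpotent orbit $\co$ to have dimension $\dim\co-\dim G.e$. There are only finitely many nilpotent orbits under (H2), and $\dim\mathcal N=2N$, so
$$\dim(\cs\cap\mathcal N)=2N-\dim G.e=\dim\ggg_e-r.$$
This gives equidimensional fibres of the correct dimension. Each fibre is defined by $r$ equations on an affine space, so it is nonempty-or-empty with every component of dimension at least $\dim\ggg_e-r$, which makes the fibres exactly equidimensional.

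\textbf{Surjectivity and conclusion.} The image of $\nu|_\cs$ is closed, because it is $\rho$-stable, and by the argument above it has dimension $r$; therefore $\nu|_\cs$ is surjective. Miracle flatness then gives flatness, and surjectivity gives faithful flatness.

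\textbf{Main obstacle.} The hard step is establishing smoothness of $G\times\cs\to\ggg$ in positive characteristic, together with the facts about the nilpotent cone, namely finitely many orbits, $\dim\mathcal N=2N$, and $\mathcal N$ being the zero fibre of $\nu$. In characteristic $p$ I would rely on (H1)--(H3) and the results in \cite[\S7]{Jan3} and \cite{Spa} for these inputs, and on \cite{GT} for the graded choice of $\v$.
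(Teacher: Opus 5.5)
Your route matches the standard argument in the references the paper cites for this lemma (Slodowy \S7.3--7.4, Premet Thm.~5.4); the paper gives no argument beyond those citations. That argument uses the contracting $\bbk^\times$-action on $\cs$, smoothness of $G\times\cs\to\ggg$ propagated from $(1,e)$, the count $\dim(\cs\cap\mathcal N)=\dim\ggg_e-r$, and the Cohen--Macaulay (``miracle'') flatness criterion. Those steps are sound. Your inclusion of the grading of $\v$ in nonpositive degrees is not merely a ``standard choice'': it is automatic for any graded complement. Indeed $\ggg_e\subseteq\bigoplus_{i\ge0}\ggg(i)$ and $(\ggg(i),\ggg(j))=0$ unless $i+j=0$, so $\bigoplus_{i>0}\ggg(i)\subseteq\ggg_e^{\perp}=[\ggg,e]$.

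The one step that fails as written is surjectivity. The claim ``the image of $\nu|_\cs$ is closed because it is $\rho$-stable'' is not a valid inference. Consider $f:\bba^2\to\bba^2$, $f(a,b)=(a,ab)$. It is equivariant for the contracting actions with weights $(1,1)$ on the source and $(1,2)$ on the target. Its image $\{(u,w)\mid u\neq 0\}\cup\{(0,0)\}$ is $\bbk^\times$-stable, dense, and contains the fixed point, yet it is not closed.

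The repair comes from what you have already proved. Miracle flatness needs no surjectivity hypothesis, so $\nu|_\cs$ is flat of finite type, hence an open map. Its image is therefore an open $\bbk^\times$-stable subset of $\bba^r$ containing $\nu(e)=0$. All weights $2\deg\tilde g_i$ are positive, so for any $y\in\bba^r$ the map $t\mapsto t\cdot y$ extends to $\bba^1$ with $0\mapsto 0$. Hence $t\cdot y$ lies in the image for some $t\neq 0$, and then so does $y$. With this replacement your argument gives surjectivity and thus faithful flatness.
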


Consequently, $\varpi_\cs$ is surjective and all  fibers of $\varpi_\cs$ have the same dimension $\sfd-r$ with $\sfd=\dim \ggg_e$ (see \cite{Pre1}, \cite{Slo} or \cite[Proposition 4.11(i)]{SZ}).

Set   $\cs_\reg:=\cs\cap \ggg_\reg$. Then
\cite[Proposition 4.13]{SZ} can be proved in the same way under the hypotheses (H1)-(H3).

   \begin{prop} (\cite[Propositon 4.13]{SZ})\label{prop: 2.2} \label{Regular pts} Under the hypotheses (H1)-(H3), the complement of $\cs_\reg$ in $\cs$ has codimension at least $2$.
   \end{prop}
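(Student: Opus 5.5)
We need to prove that the complement of $\cs_\reg$ in $\cs$ has codimension at least $2$. The plan is to follow the classical argument, as in Slodowy and in Premet's work: use the equidimensionality and flatness of $\varpi_\cs$ from Lemma \ref{Tran sli}, together with the fact that the non-regular locus in each fiber of the adjoint quotient has codimension at least $1$ in that fiber.

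First, recall that for each $\xi\in\hhh/W$ the fiber $\zeta^{-1}(\xi)\subset\ggg$ is irreducible of dimension $n-r=2N$. Under (H1)--(H3) this is Kostant's theorem, extended to good characteristic by Veldkamp and Springer--Steinberg (cf. \cite[\S7.13]{Jan3}). The regular elements of $\zeta^{-1}(\xi)$ form a single $G$-orbit, which is open and dense in the fiber. Hence the non-regular part $\zeta^{-1}(\xi)\setminus\ggg_\reg$ is a finite union of $G$-orbits of dimension at most $2N-2$, since orbit dimensions are even (they are symplectic for the coadjoint form via $\kappa$).

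Next, intersect with the slice. Transversality of $\cs$ to every $G$-orbit it meets is encoded in the fact that the action map $G\times\cs\to\ggg$ is smooth. This follows from $\ggg=[\ggg,e]\oplus\v$ together with the contracting $\bbk^\times$-action $t\mapsto t^{-2}\lambda(t)$ on $\cs$, which is the standard argument of \cite[Theorem 5.2]{GT} and \cite{Slo}. Consequently, for any $G$-orbit $\co$ the intersection $\co\cap\cs$ is either empty or of pure dimension $\dim\co-(n-\sfd)$. Applying this to each non-regular orbit $\co$ in $\zeta^{-1}(\xi)$ gives
\[\dim(\co\cap\cs)\leqslant 2N-2-n+\sfd=\sfd-r-2.\]
So in every fiber $\varpi_\cs^{-1}(\xi)$, which has dimension $\sfd-r$, the non-regular locus has codimension at least $2$.

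Finally, the non-regular locus of $\ggg$ is a finite union of sheets (or, more crudely, of decomposition classes). Each of them maps to $\hhh/W$ with fibers that are finite unions of orbits, so the estimate above holds uniformly. Adding the base dimension $r$, we obtain
\[\dim(\cs\setminus\cs_\reg)\leqslant r+(\sfd-r-2)=\sfd-2,\]
which is the claim.

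The main obstacle is the transversality step in characteristic $p$, namely the smoothness of $G\times\cs\to\ggg$. This requires the separability of orbit maps, i.e. $\Lie(G_x)=\ggg_x$, which holds under (H1)--(H3) by \cite[\S2]{Jan3}. With separability in hand, smoothness follows from the tangent-space computation at points of $\cs$ and the $\bbk^\times$-contraction to $e$.
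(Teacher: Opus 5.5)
Your proof is correct. It differs from the paper's route in the key fibrewise lemma, although the overall skeleton is the same.

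\textbf{The paper's route.} The paper gives no argument of its own beyond asserting that \cite[Proposition 4.13]{SZ} goes through under (H1)--(H3). The ingredients it has assembled around that point indicate the intended route:
\begin{itemize}
\item Lemma \ref{Tran sli}, giving equidimensional fibres of $\varpi_\cs$ of dimension $\sfd-r$;
\item the Slodowy--Premet facts quoted in the proof of Proposition \ref{prop: regular ss open den}, namely that each fibre $\varpi_\cs^{-1}(\xi)$ is normal and its smooth points are exactly the regular elements.
\end{itemize}
So the non-regular part of a fibre is its singular locus, which has codimension $\geqslant 2$ because the fibre is normal. Adding the base dimension $r$ finishes the proof.

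\textbf{Your route.} You use the same outer skeleton: a fibrewise bound $\sfd-r-2$, followed by a dimension count over $\hhh/W$. The difference is how you get the fibrewise bound. You derive it directly from evenness of orbit dimensions, separability, finiteness of the number of $G$-orbits in $\zeta^{-1}(\xi)$, and smoothness of $G\times\cs\to\ggg$. This is more self-contained and avoids quoting normality of the fibres. In fact it is essentially the estimate that underlies that normality in Slodowy's argument, since Serre's criterion for a complete intersection needs exactly this codimension-two bound on the singular locus. The paper's route is shorter only because it cites that stronger result.

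\textbf{Small points to tidy.}
\begin{itemize}
\item Finiteness of the orbits in $\zeta^{-1}(\xi)$ does not follow from density of the regular orbit, despite your ``hence''. It comes from fixing the semisimple part up to conjugacy and using finiteness of nilpotent orbits in $\ggg_s$. You do need this finiteness to bound the union.
\item Separability is what you need for $\dim G.x=\dim[\ggg,x]$, which gives $\dim\co\leqslant 2N-2$ for non-regular $\co$. Smoothness of $\gamma$ at $(1,e)$ only uses $\ggg=[\ggg,e]\oplus\v$; the contracting $\bbk^\times$-action then spreads it to all of $G\times\cs$.
\item The appeal to sheets or decomposition classes is unnecessary. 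Applying the fibre-dimension theorem to $\varpi_\cs$ restricted to each irreducible component of $\cs\setminus\cs_\reg$ already gives dimension at most $r+(\sfd-r-2)=\sfd-2$.
\end{itemize}
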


\subsubsection{}\label{sec: slodowy trans} Let $\sft$  be the translation
\begin{equation*}
\begin{array}{llll}
\sft:&\v&{\overset{\cong}\rightarrow} &\cs\\ &x&\mapsto&e+x,
\end{array}
\end{equation*}
which is an isomorphism of affine varieties. Then one obtains a morphism
\begin{align*}
\psi:=\varpi_\cs\circ \textsf{t}:\v\rightarrow\bba^r,\qquad x\mapsto(\psi_1(x),\ldots,\psi_r(x)).
\end{align*}
The morphism $\psi$ is still faithfully flat with $\psi^{-1}(\xi)\cong\varpi_\cs^{-1}(\xi)$ for any $\xi\in\bba^r$, and the fiber $\psi^{-1}(\xi)$ is normal. We choose a basis $\{x_1, \ldots, {x_\sfd}\}$ of $\ggg_e$. Since $\v$ is dual to $\ggg_e$ via $(\cdot,\cdot)$ (see \S\ref{sec: unip M}), we
let $x^*_1, \ldots, {x_\sfd^*}$ denote the linear functions on $\bbk$
defined via $x^*_i (y)=(x_i , y)$ for $y\in\v$.

The following lemma {can be obtained from} \cite[Corollary 3.4]{BGo} and its proof.
\begin{lemma}\label{lemma: 2.3} Under the hypotheses  (H1)-(H3), the Jacobian  matrix
\begin{equation*}\left(\partial \psi_i\over
\partial x^*_j\right)_{\tiny\begin{array}{l}
1\leqslant i\leqslant r\\1\leqslant j\leqslant {\sfd}
\end{array}}
\end{equation*}
 has rank $r$ in $\v$.

\end{lemma}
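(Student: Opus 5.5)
The plan is to read the statement as saying that the differential $d\psi$ has full rank $r$ at every point $x\in\v$ with $e+x\in\cs_\reg$. By Proposition \ref{prop: 2.2}, $\cs_\reg$ is open and dense in $\cs$ (its complement has codimension $\geqslant 2$), so this gives rank $r$ on a dense open subset of $\v$. It cannot mean rank $r$ at literally every point: then $\psi$ would be smooth and the nilpotent fibre $\psi^{-1}(0)$ would be smooth, which is false in general. The argument combines two facts: a property of invariant polynomials at regular elements, taken from \cite[Corollary 3.4]{BGo}, and transversality of $\cs$ to adjoint orbits.

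\emph{Step 1 (gradients of invariants).} For $f\in S(\ggg^*)^G$ and $y\in\ggg$, let $\nabla f(y)\in\ggg$ be defined by $(\nabla f(y),z)=df_y(z)$; this uses (H3). Differentiating $f(\Ad(g)y)=f(y)$ along $g\in G$ gives $(\nabla f(y),[w,y])=0$ for all $w\in\ggg$. By invariance of the form this says $[y,\nabla f(y)]\perp\ggg$, so $\nabla f(y)\in\ggg_y$. Under (H1)--(H3), \cite[Corollary 3.4]{BGo} and its proof give the following: for $y\in\ggg_\reg$, the vectors $\nabla\kappa(\tilde g_1)(y),\ldots,\nabla\kappa(\tilde g_r)(y)$ are linearly independent. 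Since $\dim\ggg_y=r$, they therefore span $\ggg_y$.

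\emph{Step 2 (transversality on all of $\cs$).} We claim $\ggg=[\ggg,y]+\v$ for every $y\in\cs$.
\begin{itemize}
\item At $y=e$ this is the definition of $\v$.
\item The set of $y\in\cs$ where the claim holds is open, by lower semicontinuity of rank.
\item This set is stable under the Kazhdan action $\rho(t)y=t^{-2}\lambda(t).y$. Indeed $\rho(t)$ fixes $e$, preserves the $T^{[e]}$-stable space $\v$, and carries $[\ggg,y]$ to $[\ggg,\rho(t)y]$.
\item All weights of $\lambda$ on $\v$ are $\leqslant 0$ because $\v$ can be chosen graded and complementary to $[\ggg,e]\supseteq\bigoplus_{i\geqslant 1}\ggg(i)$. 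Hence $\rho(t)y\to e$ as $t\to\infty$ for every $y\in\cs$.
\end{itemize}
An open $\rho$-stable subset containing $e$, in a variety contracted to $e$, is everything. I expect this step to be the main obstacle in positive characteristic. The delicate point is making sure $\v$ really is a graded complement with nonpositive weights under (H1)--(H3). I would justify this by the $\mathfrak{sl}_2$/cocharacter theory of \cite{Jan3} and \cite[\S5]{GT}: under our hypotheses $[\ggg,e]$ contains $\ggg(i)$ for all $i\geqslant 1$.

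\emph{Step 3 (conclusion).} Let $x\in\v$ with $y=e+x\in\cs_\reg$. The $i$th row of the Jacobian is the restriction of $d\kappa(\tilde g_i)_y$ to $\v$, written in the coordinates $x_j^*$. Suppose a linear combination of the rows vanishes. Then $u=\sum_i c_i\nabla\kappa(\tilde g_i)(y)$ satisfies $u\perp\v$. Also $u\in\ggg_y$ by Step 1, so $u\perp[\ggg,y]$ since $(u,[w,y])=([y,u],w)=0$. By Step 2, $u\perp\ggg$, so $u=0$ by non-degeneracy. Step 1 then forces all $c_i=0$. Hence the Jacobian has rank $r$ at every point of $\sft^{-1}(\cs_\reg)$, which is a nonempty dense open subset of $\v$, and so it has rank $r$ on $\v$ generically.
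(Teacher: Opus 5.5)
Your proposal is correct and takes essentially the same approach as the paper: both work at points $x\in\v$ with $e+x\in\ggg_\reg$, show that $(d\psi)_x$ is surjective there, and read ``rank $r$ in $\v$'' as generic rank. The only difference is that the paper cites \cite[\S7.14]{Jan3} for this surjectivity in one line. You instead unpack it into two ingredients: the independence of the differentials of the basic invariants at regular elements (via \cite[Corollary 3.4]{BGo}), and the transversality $\ggg=[\ggg,y]+\v$ along all of $\cs$, which your contraction argument using $\rho(t)$ and the $T^{[e]}$-stability of $\v$ correctly establishes.
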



\begin{proof}
Let $x\in\v$ be such that $e+x\in \ggg_{\reg}$. Then $(d\psi)_x$ is surjective,  under the hypotheses  (H1)-(H3) (see \cite[\S7.14]{Jan3}). The lemma follows.
\end{proof}

\subsection{Centers of finite $W$-algebras}\label{On the centers of}
Keep  the notations  in the previous sections. In particular, under the hypotheses (H1)-(H3), one  has  the finite $W$-algebras in the way of (\ref{equiW}) and  the {  good transverse slices}  $\cs=e+\v$ (already replacing $\v$ for the convenience of arguments).
In the following  we make more arguments on the center of $\cw(\ggg,e)$ for  the next sections where the geometric descriptions of the Zassenhaus varieties are necessary.

\subsubsection{}\label{sec: Slodowy slices and Z_0(T)} Let us first return to the $p$-center  $Z_0(\cw)=Z_p(\tilde{\mathfrak{p}})^{\mathcal{M}} $.
 Recall that $Z_0(\cw)=\cw({\ggg},e)\cap \varphi(Z_p(\ggg))\cong\varphi(Z_p(\tilde{\mathfrak{p}}))^{\calm}  $ by \eqref{eq: def Z0}. By the discussion in \S\ref{further desc of Z0}, we further have
 \begin{align}\label{eq: Spec Z0}
 \Specm(Z_0(\cw))= \kappa(\cs)^{(1)}.
 \end{align}
In the meanwhile,   one can describe $\cs$ via $\v$ (see \S\ref{sec: slodowy trans}). So we have  an isomorphism of algebras
\begin{align*}
Z_0(\cw)\cong \
 \bbk[\kappa(\v)^{(1)}].
 \end{align*}

 \subsubsection{} Recall that $Z_1(\cw)=\varphi(U(\ggg)^G)$. By the same arguments as in \cite{SZ}, under the present condition (H1)-(H3), we still have the following property for finite $W$-algebras as in \cite[Lemma 3.2]{SZ}.
\begin{lemma}\label{Har Chand Cent}
There exists an isomorphism between $\mathds{k}$-algebras
$$\varphi|_{U(\ggg)^{G}}: U(\ggg)^G\xrightarrow\sim Z_1(\cw),$$
where $\varphi|_{U(\ggg)^{G}}$ denotes the restriction of the map $\varphi$ as in \S\ref{p-center} to the subalgebra $U(\ggg)^{G}$ of $Z(\ggg)$.
\end{lemma}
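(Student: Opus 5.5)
Since $\varphi|_{U(\ggg)^G}$ maps $U(\ggg)^G$ onto $Z_1(\cw)=\varphi(U(\ggg)^G)$ by definition, only injectivity needs proof. Let $0\neq u\in U(\ggg)^G$ with $\varphi(u)=0$, i.e.\ $u.1_\chi=0$ in $Q_\chi$, equivalently $u\in I_\chi$. We will derive a contradiction by comparing leading terms under the Kazhdan filtration with the slice geometry of \S\ref{sec: invariant gen}.

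\textbf{Step 1: Kazhdan symbols.} Under the Kazhdan filtration, $\gr(U(\ggg))=S(\ggg)=\bbk[\ggg^*]$, and by \S\ref{Slo} the image of $\gr(I_\chi)$ is exactly the ideal of functions vanishing on $\chi+\mmm^\perp$. For $u\in U(\ggg)^G$ write $u$ as a polynomial in the algebraically independent generators $g_1,\dots,g_r$ of \S\ref{Res}. Each $g_i$ has a Kazhdan symbol which is the standard symbol $\tilde g_i$ (a $G$-invariant, homogeneous of some degree $d_i$ for the standard grading). Since $\lambda(t)$ acts on $\tilde g_i$ trivially, the Kazhdan degree of $\tilde g_i$ is exactly $2d_i$, so the Kazhdan symbol of $g_i$ is $\tilde g_i$ itself. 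For a monomial $g^{\mathbf t}$ the symbol is $\tilde g^{\mathbf t}$; grouping monomials of top Kazhdan degree $\sum 2t_id_i$, the symbol of $u$ is $F(\tilde g_1,\dots,\tilde g_r)$ for a nonzero weighted-homogeneous polynomial $F$, \emph{provided} this element of $S(\ggg)$ is nonzero, which holds because $\kappa(\tilde g_1),\dots,\kappa(\tilde g_r)$ are algebraically independent generators of $S(\ggg^*)^G$.

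\textbf{Step 2: restriction to the slice.} If $u\in I_\chi$ then its symbol lies in $\gr(I_\chi)$, so $F(\tilde g_1,\dots,\tilde g_r)$ vanishes on $\chi+\mmm^\perp\supseteq\chi+\kappa(\v)=\kappa(\cs)$. Transporting by $\kappa$, the polynomial $F(\kappa(\tilde g_1),\dots,\kappa(\tilde g_r))$ vanishes on $\cs$, i.e.\ $F\circ\nu|_\cs=0$. But by Lemma~\ref{Tran sli}, $\varpi_\cs=\eta\circ\nu|_\cs$ is faithfully flat, hence surjective onto $\hhh/W\cong\bba^r$, so $\nu|_\cs$ is dominant and $F=0$, a contradiction. (Alternatively, Lemma~\ref{lemma: 2.3} gives that $\psi_1,\dots,\psi_r$ have Jacobian of rank $r$, hence are algebraically independent on $\v$.)

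\textbf{Main obstacle.} The delicate point is Step 1: checking that the Kazhdan-leading term of $u$ is genuinely $F(\tilde g_1,\dots,\tilde g_r)$ and that cancellation among different monomials cannot lower the degree. This rests on the fact that $\ad h$ (equivalently $\lambda$) acts trivially on $G$-invariants, so Kazhdan degree equals twice standard degree on $U(\ggg)^G$, together with the algebraic independence of the $\tilde g_i$ in $S(\ggg)$. Under (H1)--(H3) both hold by \S\ref{Res}; the remainder is as in \cite[Lemma 3.2]{SZ}.
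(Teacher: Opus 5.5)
Your proof is correct and is essentially the symbol-restriction argument the paper imports from \cite[Lemma 3.2]{SZ} (the paper gives no further details): since $u\in U(\ggg)^G$ is fixed by $\lambda(\bbk^\times)$, its Kazhdan symbol equals its nonzero $G$-invariant standard symbol. If $u\in I_\chi$, that symbol would have to vanish on $\kappa(\cs)\subseteq\chi+\mmm^\perp$, contradicting the surjectivity of $\varpi_\cs$ from Lemma~\ref{Tran sli}. The one step you leave implicit, $\kappa(\v)\subseteq\mmm^\perp$, holds because the $\lambda$-stable complement $\v$ of $[\ggg,e]=\ggg_e^\perp\supseteq\bigoplus_{i\geqslant 1}\ggg(i)$ lies in $\bigoplus_{i\leqslant 0}\ggg(i)$, which is orthogonal to $\mmm\subseteq\bigoplus_{j\leqslant -1}\ggg(j)$.
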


To simplify notation, in the following discussions we still write $\varphi$ for the map $\varphi|_{U(\ggg)^{G}}$ as in Lemma \ref{Har Chand Cent}.  By Harish-Chandra theorem, we have $S(\hhh)^{W}\cong U(\ggg)^G$. So arising from $\varphi$ there is an isomorphism of varieties
\begin{align}\label{eq: Spec Z1}
\Specm(Z_1(\cw))\cong \hhh^*\slash {W}.
\end{align}

\subsection{The proof of Theorem \ref{central thm}}\label{sec: proof 01}
Denote by $\tilde\cz$ the subalgebra of $Z(\cw)$ generated by $Z_0(\cw)$ and $Z_1(\cw)$. We now explain that the arguments in the proof of \cite[Theorem 0.1]{SZ} are still valid for our present situation with the hypotheses (H1)-(H3).

\subsubsection{Proof for Part (1)}\label{sec: part 1} We still  set $n=\dim\ggg$ and $r=\rank\,\ggg$. By Lemma \ref{Noe Pri}(3), we can consider the fraction field of $Z(\cw)$.
 Set $\bbf_0:=\Frac(Z_0(\cw))$ and $\bbf:=\Frac(Z(\cw))$. Then  $\bbf_0$ is  a subfield of $\bbf$.   Denote by $\cq$  the fractional field of $\tilde\cz$. By the same arguments as in (Step 1) of \cite[\S5.2]{SZ}, we can show the following result.

\begin{lemma}\label{lem: fractional fields} The fraction fields $\cq$ and $\bbf$ coincide.
\end{lemma}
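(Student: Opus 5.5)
We need to show $\cq=\bbf$. Since $\tilde\cz\subseteq Z(\cw)$, we have $\cq\subseteq\bbf$, so the task is the reverse inclusion. The plan is a degree count over $\bbf_0$.

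\textbf{Step 1: $[\bbf:\bbf_0]\leqslant p^{r}$ via the PI degree.} By Theorem \ref{prem}(4), $\cw(\ggg,e)$ is free of rank $p^{\sfd}$ over $Z_0(\cw)$, with $\sfd=\dim\ggg_e$. Hence $Q(\cw):=\cw(\ggg,e)\otimes_{Z_0(\cw)}\bbf_0$ has dimension $p^{\sfd}$ over $\bbf_0$. By Lemma \ref{Noe Pri}(3) it is a domain, so it is a division algebra with center $\bbf$. Then $\dim_{\bbf}Q(\cw)=m^2$, where $m$ is the PI degree of $\cw(\ggg,e)$, and
$$[\bbf:\bbf_0]=p^{\sfd}/m^2.$$
To get a lower bound on $m$, I would exhibit irreducible $\cw(\ggg,e)$-modules of dimension $p^{(\sfd-r)/2}$. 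For generic $\eta\in\kappa(\cs)$ (regular semisimple, available by Lemma \ref{lemma: 2.3}), reduced enveloping algebras $U_\eta(\ggg)$ have simple modules of dimension $p^{N}=p^{(n-r)/2}$. Through the Morita/Skryabin-type equivalence $U_\eta(\ggg)\sim$ matrix algebra over the reduced $W$-algebra $\cw_\eta$, one gets $\dim U_\eta(\ggg)=p^{\dim\mmm\cdot 2}\dim\cw_\eta$. This gives simple $\cw_\eta$-modules of dimension $p^{N-\dim\mmm}=p^{(\sfd-r)/2}$, since $n-\sfd=2\dim\mmm$. Therefore $m\geqslant p^{(\sfd-r)/2}$ and $[\bbf:\bbf_0]\leqslant p^{r}$.

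\textbf{Step 2: $[\cq:\bbf_0]=p^{r}$.} The field $\cq$ is generated over $\bbf_0$ by $f_1,\ldots,f_r$, and each $f_i^p$ is integral over $Z_0(\cw)$ via the Veldkamp relations coming from $Z(\ggg)$. I would show that the $p^r$ monomials $f^{t}$ with $t\in\Lambda_r$ are linearly independent over $\bbf_0$. The approach is to pass to associated graded algebras under the Kazhdan filtration. There, $\gr Z_0(\cw)$ identifies with $\bbk[\v^{*}]^{p}$-type functions, namely the $p$-th powers $\bbk[\kappa(\cs)]^{p}$ under $\gr\cw\cong S(\ggg_e)$. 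The leading terms of the $f_i$ are the restrictions $\psi_i$ of the invariants $\kappa(\tilde g_i)$ to $\cs$. Linear independence of the monomials $\psi^{t}$ over $\bbk(\v)^{p}$ is equivalent to $p$-independence of $\psi_1,\ldots,\psi_r$. By the Jacobian criterion for $p$-bases, this follows from the differentials $d\psi_i$ being linearly independent at a generic point, which is exactly Lemma \ref{lemma: 2.3}. A relation over $\bbf_0$ can be cleared of denominators and its top Kazhdan-degree part taken. This gives a nontrivial relation among the graded leading terms, a contradiction. Hence $[\cq:\bbf_0]\geqslant p^r$.

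Combining the two steps, $p^{r}\leqslant[\cq:\bbf_0]\leqslant[\bbf:\bbf_0]\leqslant p^{r}$, so $\cq=\bbf$.

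\textbf{Main obstacle.} The hardest point is Step 1's lower bound on the PI degree under only (H1)--(H3). It requires the generic simple-module dimension for reduced $W$-algebras, i.e.\ the Skryabin equivalence and the Kac--Weisfeiler-type count at regular semisimple $p$-characters. I would take this from \cite{GT} or Premet's arguments, which hold under the standard hypotheses. The other step needing care is the graded-reduction argument in Step 2, where one must ensure the top terms do not cancel.
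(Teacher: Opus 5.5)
Your proposal is correct and follows essentially the route the paper takes when it defers to Step 1 of \cite[\S5.2]{SZ}. That route compares degrees over $\bbf_0$: freeness of rank $p^{\sfd}$ together with the PI degree $p^{(\sfd-r)/2}$, obtained from generic reduced algebras via Premet's Morita equivalence, gives $[\bbf:\bbf_0]\leqslant p^{r}$, while the $Z_0(\cw)$-linear independence of the monomials $f_1^{t_1}\cdots f_r^{t_r}$ gives $[\cq:\bbf_0]=p^{r}$. Your graded $p$-independence argument via Lemma \ref{lemma: 2.3}, and your use of regular semisimple points of $\cs$ (also available from Proposition \ref{prop: regular ss open den}), make explicit what the paper imports from \cite{SZ}.
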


Recall that in \S\ref{221} we have introduced the Kazhdan filtration on $U(\ggg)$ and its sub-quotients, and also their corresponding graded algebras.  
Write $\gr(\tilde{\cz})$ and $\gr(Z(\cw))$ for the graded algebras of $\tilde{\cz}$ and $Z(\cw)$ respectively.   Note that  $\tilde{\cz}\subseteq Z(\cw)$ and $\bbk$ is contained in $\tilde{\cz}$, by inductive arguments on the degrees we can finally deduce the claim
$$\tilde{\cz}=Z(\cw)$$ amounts to  the claim that
\begin{equation}\label{griso}
\gr(\tilde{\cz})=\gr(Z(\cw)).
\end{equation}
From Lemma \ref{lem: fractional fields}  one easily derives that their graded algebras $\gr(Z(\cw))$ and $\gr(\tilde{\cz})$ also share the same fraction field. It follows from Proposition \ref{PI and Central S}(3) that $\gr(Z(\cw))$ is integral over $\gr(Z_0(\cw))$, so, {\it{a fortiori}}, $\gr(Z(\cw))$ is integral over $\gr(\tilde{\cz})$.  Therefore, to prove \eqref{griso} it is sufficient to show that $\gr(\tilde{\cz})$ is integrally closed.

Due to Proposition \ref{prop: 2.2}, by the same arguments as in the (Step 3) of \cite[\S5.2]{SZ} we can show that $\Specm(\gr(\tilde\cz))$ is smooth outside of a subvariety of codimension $2$. On the other hand, due to Lemma \ref{lemma: 2.3} we can prove the following result by the same arguments as in  \cite[Lemma 5.1]{SZ}.
\begin{lemma}\label{complete int} Under the hypotheses (H1)-(H3),
$\Specm(\gr(\tilde\cz))$ is a (strict) complete intersection.
\end{lemma}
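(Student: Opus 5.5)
We will first give an explicit presentation of $\gr(\tilde\cz)$. By Lemma \ref{pmge} and \S\ref{sec: Slodowy slices and Z_0(T)}, $\gr(Z_0(\cw))\cong \bbk[\kappa(\v)^{(1)}]$ is a polynomial ring in the $p$-th powers $x_1^{*p},\ldots,x_\sfd^{*p}$; here we identify $\gr(\cw(\ggg,e))$ with $S(\ggg_e)\cong\bbk[\v]$ via $\bar\psi$ (Lemma \ref{Noe Pri}). Under the same identification, the Kazhdan-graded image of $f_i=\varphi(g_i)$ is the restriction $\psi_i$ of $\kappa(\tilde g_i)$ to $\cs\cong\v$. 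We will define the graded surjection
$$\Theta:\ \bbk[X_1,\ldots,X_\sfd,Y_1,\ldots,Y_r]\twoheadrightarrow \gr(\tilde\cz),\qquad X_j\mapsto x_j^{*p},\ \ Y_i\mapsto \psi_i .$$
The relations come from $g_i^p$, which lies in $Z_0(\ggg)[g_1,\dots]$: by Veldkamp's theorem $g_i^p-\eta_i\in Z_p(\ggg)^G$ for suitable $\eta_i$, and passing to associated graded gives $\psi_i^p=F_i(x_1^{*p},\ldots,x_\sfd^{*p})$, where $F_i$ is $\psi_i$ with Frobenius-twisted coefficients. So the ideal $J:=(Y_i^p-F_i(X))_{1\leqslant i\leqslant r}$ lies in $\ker\Theta$.

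Next we will show that $\ker\Theta=J$. Since $\bbk[X,Y]/J$ is free of rank $p^r$ over $\bbk[X]$, it suffices to show that $\gr(\tilde\cz)$ has rank $p^r$ over $\gr(Z_0(\cw))$, i.e. that the monomials $\psi^{t}$ with $t\in\Lambda_r$ are linearly independent over $\bbk[\v]^{p}$. This is where Lemma \ref{lemma: 2.3} enters, and it is the main obstacle. We will argue as in \cite[Lemma 5.1]{SZ} and \cite[Corollary 3.4]{BGo}. Because the Jacobian of $(\psi_1,\ldots,\psi_r)$ has rank $r$, the differentials $d\psi_1,\ldots,d\psi_r$ are linearly independent over $\Frac(\bbk[\v])$. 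By the theory of $p$-bases (a purely inseparable extension of exponent one), the $\psi_i$ are then $p$-independent over $\Frac(\bbk[\v]^p)$. Consequently $[\Frac(\bbk[\v]^p)(\psi_1,\ldots,\psi_r):\Frac(\bbk[\v]^p)]=p^r$, and the $p^r$ monomials are independent.

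Finally, $\gr(\tilde\cz)\cong\bbk[X,Y]/J$ is cut out in $\bba^{\sfd+r}$ by $r$ equations. Its dimension is $\sfd=(\sfd+r)-r$, since it is finite over $\bbk[X]$ and contains it. Hence $\Specm(\gr(\tilde\cz))$ is a strict complete intersection. Moreover the defining equations are homogeneous for the Kazhdan grading, so the intersection is complete in the graded sense as well.
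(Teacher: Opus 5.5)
Your strategy is the one the paper uses, by deferring to \cite[Lemma 5.1]{SZ} with Lemma \ref{lemma: 2.3} as input. You present $\gr(\tilde\cz)$ as $\gr(Z_0(\cw))[Y_1,\ldots,Y_r]/(Y_i^p-\psi_i^p)$ and get freeness on the monomials $\psi^t$, $t\in\Lambda_r$, from the rank-$r$ Jacobian. Your $p$-basis argument for that independence is correct.

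However, one step is asserted rather than proved, and it is not automatic: that $\Theta$ maps \emph{onto} $\gr(\tilde\cz)$. Here $\gr(\tilde\cz)$ is the associated graded of $\tilde\cz$ for the induced Kazhdan filtration. A priori it can be larger than the subalgebra generated by $\gr(Z_0(\cw))$ and $\psi_1,\ldots,\psi_r$, because leading terms may cancel. For instance, pick $c\in Z_0(\cw)$ with $\gr(c)=\psi_i^p$. Then $f_i^p-c\in\tilde\cz$ has strictly smaller degree, and nothing in your proposal says its symbol lies in the image of $\Theta$. Without surjectivity you only get that the image of $\Theta$ is a complete intersection. That is not the statement the normality argument in \S\ref{sec: part 1} needs.

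This is exactly where Veldkamp's theorem is needed. The graded relations $\psi_i^p\in\bbk[\v]^p$, for which you invoke it, hold for free, since $\psi_i\in\bbk[\v]$ and $\bbk$ is perfect. The repair goes as follows.

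Write $g_i^p=\sum_{t\in\Lambda_r}c_{i,t}\,g_1^{t_1}\cdots g_r^{t_r}$ with $c_{i,t}\in Z_p(\ggg)$. The basis elements are $G$-invariant and $Z_p(\ggg)$ is $G$-stable, so uniqueness of coefficients forces $c_{i,t}\in Z_p(\ggg)^G$. Since $\varphi$ is $\cm$-equivariant, $\varphi(c_{i,t})\in\varphi(Z_p(\ggg))^{\cm}=Z_0(\cw)$. Hence $f_i^p\in\sum_{t\in\Lambda_r}Z_0(\cw)f^t$, where $f^t=f_1^{t_1}\cdots f_r^{t_r}$. Inductively, $\tilde\cz=\sum_{t\in\Lambda_r}Z_0(\cw)f^t$.

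Now take $z=\sum_{t}a_tf^t\neq 0$, and let $m$ be the maximum of the filtration degrees of the $a_tf^t$. Since $\gr(\cw(\ggg,e))$ is a domain, $\gr(a_tf^t)=\gr(a_t)\psi^t$. Your independence result then shows $\sum_{\deg(a_tf^t)=m}\gr(a_t)\psi^t\neq 0$. So this sum is the symbol of $z$, and it lies in the image of $\Theta$.

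With this inserted, your proof is complete. You should also cite properly the identification $\gr(Z_0(\cw))=\bbk[\v]^p$. It is a statement about symbols, not merely the algebra isomorphism $Z_0(\cw)\cong\bbk[\kappa(\v)^{(1)}]$ or Lemma \ref{pmge}.
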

 According to Serre's theorem \cite[Chapter 4]{Se}, a complete intersection variety is normal if it is smooth outside of a subvariety of codimension 2. Hence by the above analysis,  $\Specm(\gr(\tilde\cz))$ is normal, i.e.,  $\gr(\tilde\cz)$ is integral closed. This is desired for the proof of the equality $\tilde\cz=Z(\cw)$.  As to the second statement, it follows from the arguments in \cite[Lemma 3.6]{SZ} which works in our current situation.   The proof of (1) is completed.

\subsubsection{The proof for Parts (2) and (3)} The proof of Part (2) is based on the arguments {in} \S\ref{sec: part 1} and the classical result that $Z_1(\ggg)$ is a complete intersection over $Z_0(\ggg)\cap Z_1(\ggg)$ (see for example,  \cite[Theorem 3.5(3)]{BGo}). In the same way as in \cite[\S5.3]{SZ}, we can show that $Z_1(\cw)$ is a complete intersection over $Z_0(\cw)\cap Z_1(\cw)$.  Furthermore, the multiplication map $\mu:~Z_0(\cw)\otimes_{Z_0(\cw)\cap Z_1(\cw)} Z_1(\cw)\rightarrow Z(\cw)$ is an isomorphism of $\mathds{k}$-algebras. This completes the proof of (2).

As to (3),  based on Proposition \ref{prop: 2.2} the proof can be done in the same arguments as in \cite[\S4.8]{SZ}.

\section{Description of the Zassenhaus varieties via { good transverse} slices}\label{sec: Slodowy slices}

Keep the assumptions and notations as in the previous section. In particular, we assume the hypotheses (H1)-(H3).

 \subsection{The regular semisimple elements of good transverse slices}

Now we continue the discussion on the { good transverse} slices under the  hypotheses (H1)-(H3).

 {
Parallel to the  arguments  in \S\ref{sec: regular ele}, we first introduce some other precise criterion for regular semisimple elements. In fact, $\vartheta\in\ggg^*$ is regular semisimple if and only if under {the} $G$-coadjoint action, there exists a $g\in G$ such that $g.\vartheta\in\kappa(\hhh)$ satisfies that $g.\vartheta(h_\alpha)\ne 0$ for all $\alpha\in \Phi^+$ {with $h_\alpha\in\hhh$ being the coroot of $\alpha$.}  By the definition of the Weyl group $W$, the element $H:=\prod_{\alpha\in \Phi}(h_\alpha^p-h_\alpha)\in S(\hhh)$ is $W$-invariant with $h_{-\alpha}=-h_\alpha$ for $\alpha\in\Phi^+$.
We introduce the following criterion polynomial on $\ggg^*$:
\begin{align}\label{eq: fh}
\fh:=\textsf{Res}^{-1}(H)=\textsf{Res}^{-1}(\prod_{\alpha\in \Phi}(h^p_\alpha-h_\alpha))\in\bbk[\ggg]^G,
\end{align}
where $\textsf{Res}$ has the same {meaning} as in \S\ref{Res}.

Recall that $\ggg_{\rss}$ denotes the set of regular semisimple elements of $\ggg$.} It is an open dense subset of $\ggg$ (see \cite{Cart} or  \cite{Jan3}). { Let $\ggg_{\ss}$ denote the set of semisimple elements of $\ggg$.}
Set $\cs_\rss:=\cs\cap \ggg_\rss$ { and $\cs_{\ss}:=\cs \cap \ggg_\ss$.}
\begin{prop}\label{prop: regular ss open den} $\cs_\rss$ is a non-empty open dense subset of $\cs$.
\end{prop}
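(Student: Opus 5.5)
Openness is routine: $\ggg_{\rss}$ is open in $\ggg$, being the non-vanishing locus of the invariant $F_r$ (equivalently of $\kappa^{-1}$-pulled-back $\fh$), so $\cs_\rss=\cs\cap\ggg_\rss$ is open in $\cs$. Since $\cs\cong\v$ is an affine space, hence irreducible, a non-empty open subset is automatically dense. So the whole content of the proposition is non-emptiness, i.e. that the polynomial $F_r|_\cs$ (or $\fh|_{\cs}$ transported via $\kappa$) is not identically zero on $\cs$.

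For non-emptiness I would use the faithful flatness of $\varpi_\cs:\cs\to\hhh\slash W$ from Lemma \ref{Tran sli}, in particular its surjectivity. Choose $h\in\hhh$ regular semisimple, i.e. $\alpha(h)\neq0$ for all $\alpha\in\Phi$ (possible since $\hhh$ is infinite-dimensional over an infinite field and the conditions are finitely many non-trivial linear conditions; note the roots are nonzero on $\hhh$ under (H1)–(H3)). By surjectivity there is $s\in\cs$ with $\zeta(s)=\zeta(h)$. Then $s$ and $h$ have the same image under the adjoint quotient, so their characteristic polynomials $P_s(t)$ and $P_h(t)$ coincide, as the $F_j$ are $G$-invariant and hence factor through $\ggg\slash\hskip-3pt\slash G$. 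In particular $F_r(s)=F_r(h)\neq0$, since $P_h(t)=t^r\prod_{\alpha\in\Phi}(t-\alpha(h))$ has coefficient $F_r(h)=\pm\prod_\alpha\alpha(h)\ne0$. By the criterion in \S\ref{sec: regular ele}, $s$ is regular semisimple, so $s\in\cs_\rss$.

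The only step I would double-check is this criterion ``$F_r(X)\neq0\Leftrightarrow X$ regular semisimple'' in characteristic $p$: a potential subtlety is that the semisimple part of $s$ must be conjugate to $h$, which follows from the fibre of the adjoint quotient over a regular semisimple point being the single closed orbit $G.h$ (using Jordan decomposition and that $\ad X$ has only $r$-dimensional generalized zero eigenspace, forcing $\ggg_{X_s}$ to be a Cartan subalgebra and $X_n\in\ggg_{X_s}$ nilpotent, hence $0$). Granting the criterion as stated in the paper, the argument above is complete; alternatively one can phrase it with $\fh$ and the coroot criterion, choosing $h$ with $\alpha(h)\notin\bbf_p$, which is equally immediate.
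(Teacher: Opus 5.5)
Your proof is correct and follows essentially the same route as the paper: surjectivity of $\varpi_{\cs}$ gives a point of $\cs$ in the fibre over $\zeta(h)$ for a regular semisimple $h$; that fibre lies in $\ggg_{\rss}$; and openness of $\ggg_{\rss}$ together with irreducibility of $\cs$ gives density, where your argument via the invariant $F_r$ spells out the fibre claim that the paper attributes to ``the definitions'' and the Chevalley restriction theorem. The only slips are inessential: the non-vanishing locus of $\fh$, which is built from $h_\alpha^p-h_\alpha$, is the smaller set where the relevant root values avoid $\bbf_p$, not all of $\ggg_{\rss}$, so the parenthetical ``equivalently'' in your first paragraph is wrong, as your own closing remark implicitly acknowledges; and $\hhh$ is of course finite-dimensional.
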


\begin{proof}  Consider the following commutative diagram of morphisms of varieties
\begin{align}\label{diag: S}
\begin{array}{c}
\xymatrix{G\times\cs\ar[r]^{\gamma}\ar[dr]_{p_2} &\ggg\ar[r]^{\zeta}&\hhh\slash W\\
&\cs\ar[ur]_{\varpi_\cs} &}
\end{array}
\end{align}
where $\gamma$ is defined via mapping $(g,s)$ to $\Ad(g)s$ for $g\in G$ and $s\in\cs$ which just partially explains the meaning of $\cs$ as a good transverse slice to the nilpotent orbit $\Ad(G)e$; $p_2$ stands for the projection on the second component;  the map
$$\zeta: \ggg\rightarrow \ggg\slash\hskip-3pt\slash G{\overset{\textsf{res}}{\longrightarrow}}\hhh\slash W$$
is the adjoint quotient map as defined  in {\S\ref{Res}}; as well as
$\varpi_\cs$ is defined there.

By the same arguments as in \cite[Lemma 5.2]{Slo} along with its proof and remarks  (or see \cite[Theorem 5.4]{Pre1}), we have that the morphism $\varpi_\cs$ is flat, its fibers are normal, and $x\in\cs$ is a regular point of the fiber $\varpi_\cs^{-1}\zeta(x)$ if and only if $x$ is a regular element of $\ggg$. As the arguments in {\S\ref{Res}}, $\varpi_\cs$ is surjective. Hence, by the definitions of regular semisimple elements and of the restriction mapping in the Chevalley restriction theorem (see {\S\ref{sec: invariant gen}}), for a {regular semisimple} element $x\in\ggg$ we have that $\varpi_\cs^{-1}\zeta(x)$ consists of regular semisimple elements.
So $\cs_\rss$ is nonempty. Note that regular semisimple elements form an open dense subset of $\ggg$. Hence $\cs_\rss$ is naturally an open {dense} subset of the irreducible variety $\cs$.
\end{proof}

\begin{remark}\label{rem: 4.2 for cs}
  Note that $\kappa$ gives rise to a $G$-equivariant isomorphism from $\ggg$ to $\ggg^*$.  We have the following diagram as a counterpart to  (\ref{diag: S}),
\begin{align*}
\begin{array}{c}
\xymatrix{G\times\kappa(\cs)\ar[r]^{\gamma^*}\ar[dr]_{p_2^*} &\ggg^*\ar[r]^{\zeta^*} &\hhh^*\slash W\\
&\kappa(\cs)\ar[ur]_{\varpi^*_\cs} &}
\end{array}
\end{align*}
where $\gamma^*, \zeta^*, p_2^*, \varpi_\cs^*$ are the counterparts to $\gamma, \zeta, p_2, \varpi_\cs$, respectively. For example,  $\gamma^*$ means the mapping which sends $(g,\vartheta)$ to $\Ad^*(g)\vartheta$ for $g\in G$ and $\vartheta\in\kappa(\cs)$ where $\Ad^*$ stands for the coadjoint action of $G$ on $\ggg^*$.
\end{remark}

\subsection{Description of the Zassenhaus varieties}\label{sec: main them 3}
The precise arguments for Theorem \ref{central thm}(2) actually implies the following lemma. The {complete} proof for it can be provided as done in \cite[\S5.3]{SZ}.

\begin{lemma}\label{lem: intersec of two centers} The intersection  $Z_0({\cw})\cap Z_1({\cw})$ is a polynomial
algebra of rank $r$, and $Z_1({\cw})$ is a free $Z_0({\cw})\cap Z_1({\cw})$-module with basis $\{f_1^{t_1}\cdots f_r^{t_r}\mid0\leqslant t_i\leqslant p-1, 1\leqslant i\leqslant r\}$.
\end{lemma}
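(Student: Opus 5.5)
The plan is to transport Veldkamp's classical description of $Z_0(\ggg)\cap Z_1(\ggg)$ through the isomorphism $\varphi|_{U(\ggg)^G}$ of Lemma \ref{Har Chand Cent}. Recall that under (H1)--(H3), $U(\ggg)^G\cong S(\hhh)^W$ is a polynomial ring in $g_1,\ldots,g_r$. Moreover, $Z_0(\ggg)\cap Z_1(\ggg)=Z_p(\ggg)^G$ is a polynomial ring in $r$ generators $c_1,\ldots,c_r$. Under the Harish-Chandra isomorphism this is the image of $\mathfrak{f}(S(\hhh^{(1)})^W)$. Finally, $U(\ggg)^G$ is free over $Z_p(\ggg)^G$ with basis $g_1^{t_1}\cdots g_r^{t_r}$, $0\leqslant t_i\leqslant p-1$ (see \cite[Theorem 3.5]{BGo}). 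Applying $\varphi$, which is injective on $U(\ggg)^G$ and maps $Z_p(\ggg)$ into $Z_0(\cw)$, we get that $\varphi(Z_p(\ggg)^G)\subseteq Z_0(\cw)\cap Z_1(\cw)$. This image is a polynomial ring in $\varphi(c_1),\ldots,\varphi(c_r)$, and $Z_1(\cw)$ is free over it with basis $f_1^{t_1}\cdots f_r^{t_r}$.

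The remaining and main step is the reverse inclusion $Z_0(\cw)\cap Z_1(\cw)\subseteq\varphi(Z_p(\ggg)^G)$. Take $z=\varphi(u)$ with $u\in U(\ggg)^G$ and $z\in Z_0(\cw)$. Write $u=\sum_t a_t g^t$ with $a_t\in Z_p(\ggg)^G$, and subtract the $t=0$ term; this reduces us to the case where $a_0=0$. We then need to show that $\sum_{t\neq 0}\varphi(a_t)f^t\in Z_0(\cw)$ forces all $a_t$ with $t\neq 0$ to vanish. I would do this with Theorem \ref{central thm}(1), used only in the form of the intermediate statements from \S\ref{sec: part 1}: $Z(\cw)=\tilde\cz$ is a free $Z_0(\cw)$-module on the monomials $f^t$, $t\in\Lambda_r$. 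Under this freeness, $\sum_{t\neq0}\varphi(a_t)f^t$ is expressed in the basis with coefficients $\varphi(a_t)\in Z_0(\cw)$, while an element of $Z_0(\cw)$ has coordinates concentrated at $t=0$. Uniqueness of coordinates therefore gives $\varphi(a_t)=0$ for $t\neq0$, hence $a_t=0$ by injectivity. This shows $Z_0(\cw)\cap Z_1(\cw)=\varphi(Z_p(\ggg)^G)$, and both claims follow.

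The delicate point is avoiding circularity. Freeness of $Z(\cw)$ over $Z_0(\cw)$ with the basis $f^t$ must be established independently of the present lemma. The rank-$p^r$ freeness over $Z_0(\cw)$ comes from \cite[Lemma 3.6]{SZ}, via the graded argument: the symbols of the $f_i$ and of $Z_0(\cw)$ form a complete intersection, by Lemma \ref{complete int} and Lemma \ref{lemma: 2.3}. Alternatively, one can bypass it with a graded check. Pass to Kazhdan-graded symbols in $\gr\cw\cong S(\ggg_e)$, where $\gr\varphi(c_i)$ are the $p$-th powers $\tilde g_i^{\,p}|_{\kappa(\cs)}$ and $\gr f_i=\tilde g_i|_{\kappa(\cs)}$. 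The $\tilde g_i|_{\kappa(\cs)}$ are algebraically independent by Lemma \ref{lemma: 2.3}, so $S(\ggg_e)$ restricted to $\kappa(\cs)$ is free over $\bbk[\kappa(\cs)]^p$. Independence of the monomials $\tilde g^t$, $t\in\Lambda_r$, over this $p$-th power subring then follows from the Jacobian rank $r$ by a standard $p$-basis argument. I expect this independence over $\gr Z_0(\cw)$ to be the main obstacle, and I would handle it exactly by the differential criterion of Lemma \ref{lemma: 2.3}.
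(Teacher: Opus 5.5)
Your proposal is correct and follows essentially the paper's route, which defers to \cite[\S5.3]{SZ} and the proof of Theorem \ref{central thm}(2). You transport the Brown--Gordon structure of $U(\ggg)^G$ over $Z_p(\ggg)^G$ through the isomorphism $\varphi|_{U(\ggg)^G}$ of Lemma \ref{Har Chand Cent}, and you use the $Z_0(\cw)$-linear independence of the monomials $f^t$ from Theorem \ref{central thm}(1) to get $Z_0(\cw)\cap Z_1(\cw)=\varphi(Z_p(\ggg)^G)$. One minor correction: $\varphi$ maps $Z_p(\ggg)$ onto $Z_p(\tilde{\mathfrak{p}})$, which is in general strictly larger than $Z_0(\cw)$, so the inclusion $\varphi(Z_p(\ggg)^G)\subseteq Z_0(\cw)$ should instead be justified by $\varphi(Z_p(\ggg)^G)\subseteq Z_p(\tilde{\mathfrak{p}})\cap\cw(\ggg,e)=Z_0(\cw)$, using \eqref{eq: def Z0}.
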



Denote by $\mathcal{V}$ the affine variety defined by $Z_0({\cw})\cap Z_1({\cw})$.
According to Lemma \ref{lem: intersec of two centers} along with (\ref{eq: Spec Z0}) and (\ref{eq: Spec Z1}),  we have the following {projection morphisms} $\pi_1$ and $\pi_2$ of varieties
\begin{align}\label{diag: intersection}
\begin{array}{c}
\xymatrix{\kappa({\cs})^{(1)}
\ar[dr]_{\pi_1}   &\ggg^*\slash\hskip-3pt\slash G\ar[r]^{\textsf{res}^*}
&\hhh^*\slash W  \ar[dl]^{\pi_2} \\
&\mathcal{V} &}
\end{array}
\end{align}
{where $\textsf{res}^*$ denotes the counterpart to $\textsf{res}$ as in \eqref{eq: variHC}}.

Still denote by $\scrz$  the Zassenhaus variety of $\cw(\ggg,e)$.  As a direct consequence of Theorem \ref{central thm} along with the above lemma, we have

 \begin{theorem}\label{cor: fib prod geometry1} There is a canonical isomorphism of schemes via the diagram (\ref{diag: intersection}):
 \begin{align}\label{eq: fiber product}
 \mathscr{Z}{\overset\cong\longrightarrow} \kappa(\cs)^{(1)}
 \times_{\calv}\hhh^*\slash W \;\; {\overset \cong\longleftarrow} \kappa(\cs)^{(1)}
 \times_{\calv}\ggg^*\slash\hskip-3pt\slash G.
 \end{align}
\end{theorem}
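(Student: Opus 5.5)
The plan is to read the isomorphism directly off Theorem \ref{central thm}(2), by translating tensor products of commutative algebras into fibre products of affine schemes. Write $A:=Z_0(\cw)\cap Z_1(\cw)$, so that $\calv=\Specm(A)$. By Theorem \ref{central thm}(2), multiplication gives an isomorphism of $\bbk$-algebras
\begin{align*}
\mu:\ Z_0(\cw)\otimes_{A}Z_1(\cw)\xrightarrow{\ \sim\ } Z(\cw).
\end{align*}
Applying $\Specm$ (equivalently $\mathrm{Spec}$, since everything is finitely generated over $\bbk$ by Proposition \ref{PI and Central S}(3)) turns this into
\begin{align*}
\scrz\cong \Specm(Z_0(\cw))\times_{\calv}\Specm(Z_1(\cw)).
\end{align*}
The structure maps are the morphisms $\pi_1,\pi_2$ induced by the inclusions $A\hookrightarrow Z_0(\cw)$ and $A\hookrightarrow Z_1(\cw)$. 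Lemma \ref{lem: intersec of two centers} guarantees that $A$ is a polynomial ring in $r$ variables, so $\calv\cong\bba^r$ is a genuine affine variety.

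Next I identify the two factors. By \eqref{eq: Spec Z0} we have $\Specm(Z_0(\cw))=\kappa(\cs)^{(1)}$, coming from $Z_0(\cw)=Z_p(\tilde\ppp)^{\calm}\cong\bbk[(\chi+\kappa(\v))^{(1)}]$. By Lemma \ref{Har Chand Cent} together with the Harish-Chandra isomorphism we have $\Specm(Z_1(\cw))\cong\hhh^*/W$, which is \eqref{eq: Spec Z1}. Substituting these gives the first isomorphism in \eqref{eq: fiber product}. For the second isomorphism I compose with $\textsf{res}^*:\ggg^*/\!\!/G\xrightarrow{\sim}\hhh^*/W$, the coadjoint form of \eqref{eq: variHC} transported by $\kappa$ and the Chevalley restriction theorem. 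I then define the structure map $\ggg^*/\!\!/G\to\calv$ as $\pi_2\circ\textsf{res}^*$. With that choice, $\mathrm{id}\times\textsf{res}^*$ is an isomorphism of fibre products over $\calv$.

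The one point that needs care is compatibility of the structure maps, so that the identifications above really are morphisms over $\calv$ and the fibre product is taken with respect to the maps in diagram \eqref{diag: intersection}. Concretely, I have to check that the composite $A\hookrightarrow Z_0(\cw)\cong\bbk[\kappa(\cs)^{(1)}]$ agrees with the map that defines $\pi_1$. This is automatic if $\pi_1$ is simply defined as the morphism dual to the inclusion, and the same applies to $\pi_2$. It is worth recording, as in \cite[\S5.3]{SZ}, that $A=\varphi(Z_p(\ggg)\cap U(\ggg)^G)=\varphi(Z_p(\ggg)^G)$. Under the description $\kappa(\cs)^{(1)}\subset\ggg^{*(1)}$, the map $\pi_1$ is then the restriction of the Frobenius-twisted adjoint quotient $\ggg^{*(1)}\to\ggg^{*(1)}/\!\!/G\cong\hhh^{*(1)}/W$ to $\kappa(\cs)^{(1)}$. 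Likewise $\pi_2$ is the map $\hhh^*/W\to\hhh^{*(1)}/W$ induced by $\mathfrak F$. Identifying these maps is not needed for the abstract isomorphism, but it is what makes the isomorphism canonical. I expect this bookkeeping, namely matching the $\varphi$-images of the $p$-center invariants with the coordinate functions pulled back along $\pi_1$ and $\pi_2$, to be the main obstacle. The remaining steps are formal consequences of Theorem \ref{central thm}(2).
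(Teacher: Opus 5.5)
Your proposal is correct and follows essentially the same route as the paper. The paper also obtains the isomorphism directly from the multiplication isomorphism of Theorem \ref{central thm}(2), using Lemma \ref{lem: intersec of two centers} for $\calv$, the identifications \eqref{eq: Spec Z0} and \eqref{eq: Spec Z1} for the two factors, and $\textsf{res}^*$ for the second isomorphism; your extra identification of $\pi_1$ and $\pi_2$ (twisted adjoint quotient and the map induced by $\mathfrak{F}$) goes beyond what the paper records but, as you say, is not needed for the statement.
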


 {This yields Theorem \ref{thm: main 2}.}

\begin{remark}\label{rem: JC decomp}
 Recall $\mathfrak{F}: \hhh^*\rightarrow \hhh^{*(1)}$ is defined via $\mathfrak{F}(\lambda)(h)= \lambda(h)^p-\lambda(h^{[p]})$ for $\lambda\in\hhh^*$ and $h\in\hhh$.
From Theorem \ref{cor: fib prod geometry1}, it is  yielded that $\scrz$ can be identified with a closed subvariety of the affine space
 $$\kappa(\cs)^{(1)}\times \hhh^*\slash W$$
 whose elements $(\vartheta, \zeta^*(\lambda))$ with  $\zeta^*:\hhh^*\rightarrow \hhh^*\slash W$ {being} the same as  in Remark \ref{rem: 4.2 for cs}  are  subject to the constraint:
 the semisimple part $\vartheta_s$ in the Jordan-Chevalley decomposition $\vartheta=\vartheta_s+\vartheta_n$ is $G$-conjugate to some element
  $\vartheta'_s\in \hhh^*$ satisfying that
 \begin{align}\label{eq: conj ss}
 \vartheta'_s(h)=\mathfrak{F}(\lambda)(h)= \lambda(h)^p-\lambda(h^{[p]})
 \end{align}
  for $h\in\hhh$. Here, it's worth reminding the following facts.

    (1) If the semisimple part $\vartheta_s$ is conjugate  to another element $\vartheta_s''\in \hhh^*$,  which means, there are $g',g''\in G$ such that $\vartheta_s'=\Ad^*(g')\vartheta_s$ and $\vartheta_s''=\Ad^*(g'')\vartheta_s$, {we claim that
    $\vartheta_s''=w(\vartheta_s')$ for some $w\in W$.  To achieve this, we just need to show
    \begin{align}\label{eq: Jan}
     \Ad^*(G)(\vartheta'_s)\cap\mathfrak{h}^*=\{w(\vartheta'_s)\mid w\in W\},
     \end{align}
     where $\Ad^*(G)$ denotes the coadjoint action of $G$ on $\g^*$.
This can be proved by the same consideration as in \cite[\S 7.12]{Jan3}. For the readers' convenience, we give detailed  arguments. Recall that the notation $G_\lambda$ denotes the stabilizer of $\lambda\in \ggg^*$ in $G$. This means $G_\lambda=\{g\in G\mid \Ad^*(g)\lambda=\lambda\}$.
The inclusion ``$\supset$"  in (\ref{eq: Jan}) is clear.  For the reverse inclusion, we consider any given $g^{-1}\in G$ with $\Ad^*(g^{-1})(\vartheta'_s)\in\mathfrak{h}^*$.
  Note that $T$ is contained in $G_{\vartheta'_s}$ and $G_{\Ad^*(g)(\vartheta'_s)}=g^{-1}(G_{\vartheta'_s})g$, hence a maximal torus in these two groups. So $T$ and $gTg^{-1}$ are maximal tori in $G_{\vartheta'_s}$ and there exists $g_1\in G_{\vartheta'_s}$
with $g_1Tg_1^{-1}=gTg^{-1}$; it follows that $g^{-1}g_1$ normalises $T$. Note that $w(\vartheta'_s$) with $w\in W$ are just the $\Ad^*(n)(\vartheta'_s)$ with $n$
 being in the normaliser of $T$. Hence  $\Ad^*(g^{-1})(\vartheta'_s)=\Ad^*(g^{-1}g_1)(\vartheta'_s)$ is equal to some $w(\vartheta'_s)$ with $w\in W$. This
yields (\ref{eq: Jan}).  So the constraint (\ref{eq: conj ss}) is well-defined for $\kappa(\cs)^{(1)}\times \hhh^*\slash W$.}

        (2) Recall that there is a polynomial $f(x)\in \bbk[x]$ such that $\vartheta_s=f(\vartheta)$ (see for example,  \cite[\S1.6.3]{GW}).  So  $\vartheta_s'=\Ad^*(g')\vartheta_s$ with $g'\in G$ is a regular function on $\vartheta$ (see for example,  \cite[\S1.4.2]{GW}).

\end{remark}

{

\subsection{}\label{sec: key lemma} We will introduce a key lemma on the semisimple elements of  $\kappa(\cs)$, which will be used in \S\ref{sec: 4}.  Fix a Chevalley basis $\BbbB$ in $\ggg$ which consists of the root vectors $X_\alpha$ for $\alpha$ ranging over $\Phi$ and the toral basis $h_i$ with $i=1,\ldots,r$ of $\hhh$  such that $h_j=h_{\alpha_j}$ for the simple root system $\Delta=\{\alpha_j\mid j=1,\ldots,\text{rank}([\ggg,\ggg])\}$ of $\Phi^+$. Then $\BbbB$ spans a restricted Lie algebra $\ggg_{\bbf_p}$ over $\bbf_p$,  and  $\ggg_{\bbf_p}$ is an $\bbf_p$-form of $\ggg$.

\begin{lemma}\label{lem: Fro closed} Keep the notations as above. Let $\vartheta\in \kappa(\cs)$ be a semisimple element which is conjugate to $\vartheta'\in \hhh^*$ by the action of $\Ad^*(g^{-1})$ for $g\in G$.  Then  $\Ad^*(g)(\mathfrak{F}(\vartheta'))\in \kappa(\cs)_{ss}^{(1)}$.  Conversely, for any $\lambda\in \hhh^*$ with  $\Ad^*(g)(\mathfrak{F}(\lambda))\in \kappa(\cs)^{(1)}$, then $\Ad^*(g)\lambda\in \kappa(\cs)_{ss}$.
All the above also hold in the regular semisimple situation.
\end{lemma}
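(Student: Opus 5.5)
Write $\vartheta=\Ad^*(g)\vartheta'$ with $\vartheta'\in\hhh^*$. The plan is to compare two pieces of structure on the twisted variety $\kappa(\cs)^{(1)}\subset\ggg^{*(1)}$. The first is its identification in \S\ref{further desc of Z0} with $\Specm(Z_0(\cw))$, so that the points of $\kappa(\cs)^{(1)}$ are exactly those of $\kappa(\cs)$, regarded through the $p$-center. The second is the map $\mathfrak{F}$, which on points of $\hhh^*$ is $\lambda\mapsto(h\mapsto\lambda(h)^p-\lambda(h^{[p]}))$. Using the $\bbf_p$-form $\ggg_{\bbf_p}$ spanned by the Chevalley basis $\BbbB$, the Frobenius twist of an affine $\bbk$-variety defined over $\bbf_p$ can be identified pointwise with the variety itself, with coordinates raised to the $p$-th power. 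Under this identification, for $\lambda\in\hhh^*$ and $h=h_i\in\BbbB$ toral ($h_i^{[p]}=h_i$), we have $\mathfrak{F}(\lambda)(h_i)=\lambda(h_i)^p-\lambda(h_i)$. So $\mathfrak{F}$ is the Lang-type map $\mathrm{Fr}-\mathrm{id}$ on $\hhh^*\cong\bbk^r$. Its key properties are that it is $W$-equivariant (because $W$ is defined over $\bbf_p$) and surjective with finite fibres, each fibre a coset of $\hhh^*(\bbf_p)$.

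For the forward direction, I would use the commutative diagram of Remark \ref{rem: 4.2 for cs} together with the description in Remark \ref{rem: JC decomp} of $\scrz$ as a fibre product over $\calv$. First, $\Ad^*(g)$ commutes with the twist, since $G$ and its coadjoint action are defined over $\bbf_p$ and $\mathfrak f$ is $G$-equivariant. The real input is that membership of a semisimple functional in $\kappa(\cs)$ is a condition only on its $G$-orbit. Because $\cs$ is a transverse slice, $\gamma^*\colon G\times\kappa(\cs)\to\ggg^*$ is smooth, hence open, and $\varpi^*_\cs$ is surjective by Lemma \ref{Tran sli}. For a semisimple orbit $\co$, the orbit is closed and equals the whole fibre of $\zeta^*$ over its image when that image is a regular value. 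I would then show that $\co\cap\kappa(\cs)\ne\emptyset$ if and only if $\zeta^*(\co)$ lies in $\varpi^*_\cs(\kappa(\cs)_{ss})$, and that this set is stable under the map on $\hhh^*/W$ induced by $\mathfrak{F}$. This stability is the expected main obstacle. The route I would try first is to restrict to the regular semisimple locus: there every fibre of $\zeta^*$ is a single closed orbit, and $\varpi^*_\cs$ is onto by Lemma \ref{Tran sli}, so the statement is automatic. One then has to say precisely what ``$\Ad^*(g)(\mathfrak F(\vartheta'))$ lies in $\kappa(\cs)^{(1)}$'' means: namely that, possibly after modifying $g$ within $G_{\mathfrak F(\vartheta')}\cdot g$, the element lands in the slice. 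This uses the conjugacy statement \eqref{eq: Jan} to absorb the Weyl group ambiguity.

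For the general semisimple case, I would use that every fibre of $\varpi^*_\cs$ is nonempty (surjectivity) and normal of dimension $\sfd-r$. Each fibre of $\zeta^*$ contains a unique closed orbit, which is semisimple. The slice meets the closure of every orbit in the fibre, and I would transport this meeting point to the closed orbit using the contracting $\bbk^\times$-action attached to $\lambda$, which preserves $\cs$ and fibres. Applied to $\mathfrak F(\vartheta')$ this yields the forward claim. The converse is the same argument run backwards: $\Ad^*(g)\mathfrak F(\lambda)\in\kappa(\cs)^{(1)}$ fixes the $\zeta^*$-image, hence the $W$-orbit of $\mathfrak F(\lambda)$, and surjectivity of $\varpi_\cs^*$ over the corresponding image of $\lambda$ gives $\Ad^*(g)\lambda\in\kappa(\cs)_{ss}$. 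Finally, $\mathfrak F$ preserves the nonvanishing of $\fh$ from \eqref{eq: fh}, since $\fh$ is built from the factors $h_\alpha^p-h_\alpha$. Hence the whole argument restricts to the regular semisimple loci.
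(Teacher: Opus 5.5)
\textbf{There is a genuine gap.} Your orbit-level facts never reach the specific points named in the lemma, and for a fixed $g$ the lemma is false as stated.

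\textbf{The specific point cannot be reached by orbit arguments.} The lemma is about the point $\Ad^*(g)\mathfrak F(\vartheta')$, and this point depends only on $\vartheta$. Any other element conjugating $\vartheta$ into $\hhh^*$ changes $g$ only by $G_{\vartheta'}$ and $\sfn$ (by \eqref{eq: Jan}). Since $\mathfrak F$ is $W$-equivariant and $G_{\vartheta'}\subseteq G_{\mathfrak F(\vartheta')}$, there is no room to ``modify $g$''. Your inputs (surjectivity of $\varpi^*_\cs$, the fibres of $\zeta^*$, \eqref{eq: Jan}) can at most show that the $G$-orbit of $\mathfrak F(\vartheta')$ meets $\kappa(\cs)^{(1)}$. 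The converse has the same problem: surjectivity of $\varpi^*_\cs$ gives some point of $\kappa(\cs)$ over $\zeta^*(\lambda)$, not the point $\Ad^*(g)\lambda$, and not necessarily a semisimple one. Your sentence that membership in $\kappa(\cs)$ is a condition on the $G$-orbit is false; only ``the orbit meets $\kappa(\cs)$'' is orbit-level.

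\textbf{Counterexample with $g$ fixed.} Take $G=\mathrm{SL}_2$, $p>2$, $e$ regular, $\v=\bbk f$, $\hhh=\bbk h$. Identify points of $\kappa(\cs)^{(1)}=\Specm(Z_0(\cw))$ with $p$-characters $\xi\in\kappa(\cs)$ via $x^p-x^{[p]}\mapsto\xi(x)^p$, as in the definition of $\cC_\cs$.
\begin{itemize}
\item Forward direction. Let $\vartheta=\kappa(e+a^2f)$ with $a\neq0$, and $\Ad(g)\,\mathrm{diag}(a,-a)=e+a^2f$. Then $\vartheta'(h)=2a$, and $\mathfrak F(\vartheta')$ is the point attached to $\mu=\kappa(\mathrm{diag}(b,-b))$ with $(2b)^p=(2a)^p-2a$. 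Now $\Ad^*(g)\mu=\kappa\bigl((b/a)(e+a^2f)\bigr)$ lies in $\kappa(\cs)$ only if $b=a$, i.e.\ $a=0$.
\item Converse. Any $\lambda$ with $\lambda(h)^p-\lambda(h)=(2a)^p$ satisfies its hypothesis, since $\Ad^*(g)\mathfrak F(\lambda)$ is then the point attached to $\vartheta$. But $\Ad^*(g)\lambda=\kappa\bigl((\lambda(h)/2a)(e+a^2f)\bigr)\notin\kappa(\cs)$.
\end{itemize}

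\textbf{The orbit-level version also fails, at the steps you rely on.} First, $\mathfrak F$ does not preserve regularity. The point $\mathfrak F(\vartheta')$ is regular if and only if $\vartheta'(h_\alpha)\notin\bbf_p$ for all $\alpha$, i.e.\ if and only if $\fh(\vartheta')\neq0$, which is strictly stronger than regularity of $\vartheta'$. Second, a good transverse slice need not meet non-regular semisimple orbits: the Kostant slice lies in $\ggg_\reg$, and the fibre of $\varpi_\cs$ over the image of $0$ is $\{e\}$. Third, the contracting $\bbk^\times$-action rescales the invariants, so it does not preserve the fibres of $\varpi_\cs$ and cannot move a point to the closed orbit. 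Concretely, take $2a\in\bbf_p^\times$ in the example above. Then $\vartheta$ is regular semisimple but $\mathfrak F(\vartheta')=0$, whose orbit misses $\kappa(\cs)^{(1)}$, so the regular semisimple addendum fails too. What your surjectivity argument does prove is weaker: when $\fh(\vartheta')\neq0$, the $G$-orbit of $\mathfrak F(\vartheta')$ meets $\kappa(\cs)^{(1)}$.

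\textbf{Comparison with the paper.} The paper argues by direct computation rather than orbit geometry. It extends $\mathfrak F(\vartheta')$ to $\ggg$ by $x\mapsto\vartheta'(x)^p-\vartheta'(x^{[p]})$. It then gets $\textsf{AS}(\vartheta)=\Ad^*(g)\mathfrak F(\vartheta')$ from the $G$-equivariance of $[p]$ and $(\cdot,\cdot)$, and asserts that $\textsf{AS}$ maps $\kappa(\cs)$ onto $\kappa(\cs)^{(1)}$. Your proposal has no counterpart of this, but that route does not close the gap either. The extended function is not additive on $\ggg$, so it is not the point $\mathfrak F(\vartheta')$ of $\ggg^{*(1)}$, and the example contradicts the conclusion. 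The fixed-$g$ statement, which is the form used for $\cC_\cs\to\cU_\cs$, has to be reformulated before any proof can go through.
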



\begin{proof}
Diagrammatically, we will show   that the following diagram is commutative
\begin{itemize}
\item[(\textbf{diag})]:
\[
\begin{CD}
\vartheta \in\kappa(\cs)_{ss} @>{\Ad^*(g^{-1})}>>\vartheta'=\Ad^*(g^{-1})\vartheta \;\in \hhh^*\\
@V{\textsf{AS}}|_{\kappa(\cs)}VV @V{\mathfrak{F}}VV\\
 \Ad^*(g)(\mathfrak{F}(\vartheta'))\in\kappa(\cs)_{ss}^{(1)}@<<{\Ad^*(g)}<  \mathfrak{F}(\vartheta')\in \hhh^{*(1)}
\end{CD},
\]
\end{itemize}
where $\textsf{AS}|_{\kappa(\cs)}$  on the left vertical arrow denotes the restriction of $\textsf{AS}$ to $\kappa(\cs)$, the latter of which
means the map from $\ggg^*$ to $\ggg^{*(1)}$  such that  for any $\vartheta\in \ggg^*$, $\textsf{AS}(\vartheta)(x)=\vartheta(x)^p-\vartheta(x^{[p]})$ for all $x\in\ggg$.





First note that by definition,  $\vartheta\in \kappa(\cs)_{ss}$ is equivalent to say  $\vartheta=(e+v,\cdot)$ for $e+v\in\cs_{ss}$. Furthermore, $\vartheta'=\Ad^*(g^{-1})(\vartheta)\in \hhh^*$ implies that $\vartheta'=(\Ad(g^{-1})(e+v),\cdot)\in\hhh^*\subset \ggg^*$. Equivalently, $\Ad(g^{-1})(e+v)\in \hhh$.
By definition, $\mathfrak{F}(\vartheta')$ is defined via $\mathfrak{F}(\vartheta')(x)=\Ad^*(g^{-1})(\vartheta)(x)^p-\Ad^*(g^{-1})(\vartheta)(x^{[p]})$ for any $x\in \ggg$. This means that $\mathfrak{F}(\vartheta')(x)=(\Ad(g^{-1})(e+v),x)^p-(\Ad(g^{-1})(e+v),x^{[p]})$.  Note that $g\in G$ is commutative with the $p$-mapping $[p]$, and preserves the form $(\cdot,\cdot)$ invariant, which also gives rise to an automorphism of $U(\ggg)$.
We have
\begin{align*}
\textsf{AS}|_{\kappa(\cs)}(\vartheta)(x)&= (e+v, x)^p-(e+v, x^{[p]})\cr
&=(\Ad(g^{-1})(e+v),\Ad(g^{-1})(x))^p-(\Ad(g^{-1})(e+v), (\Ad(g^{-1})(x))^{[p]})\cr
&=\Ad^*(g)(\mathfrak{F}(\vartheta'))(x).
\end{align*}
Thus, $\textsf{AS}|_{\kappa(\cs)}(\vartheta)=\Ad^*(g)(\mathfrak{F}(\vartheta'))$. This proves that the diagram (\textbf{diag}) is commutative, and $\Ad^*(g)(\mathfrak{F}(\vartheta'))=\mathfrak{F}(\vartheta')\circ\Ad(g^{-1})$ lies in $\kappa(\cs)^{(1)}$.

Conversely,
suppose that $\lambda\in\hhh^*$ satisfies  $\theta:=\Ad^*(g)(\mathfrak{F}(\lambda))\in \kappa(\cs)^{(1)}$ with $g\in G$.  Set  $\vartheta:=\Ad^*(g)(\lambda)$. We shall prove $\vartheta\in \kappa(\cs)$. By the non-degeneracy of $(\cdot,\cdot)$ on $\ggg$, we can write $\vartheta=(w,\cdot)$. We want to show $w\in\cs$.

%
Recall that $\kappa(\cs)=
\{(e+v,\cdot):\ggg\rightarrow \bbk, x\mapsto (e+v,x),\;\forall x\in\ggg\mid v\in \v\}$. One  can describe  $\kappa(\cs)^{(1)}$ via  $\kappa(\cs)$  with a Frobenius twist as below. By the definition of Frobenius twist (see for example,  \cite[\S{I}.9.1]{Jan1}),  in the above setup $\kappa(\cs)^{(1)}$ consists of all functions $((e+v,\cdot)):\ggg\rightarrow \bbk, x\mapsto ((e+v,x))$ for $v\in\v$  such that $((e+v,x_0))=(e+v,x_0)$ for $x_0\in \BbbB$, and $((e+v, ax_0+bx_0'))= a^{p^{-1}}(e+v,x_0)+b^{p^{-1}}(e+v,x'_0)$ for any $a,b\in \bbk$ and $x_0,x_0'\in \BbbB$.  Hence,  for $\theta\in\kappa(\cs)^{(1)}$ we have $e+v\in \cs$ with $v\in \v$ such that $\theta=((e+v,\cdot))$.
Then we have
\begin{align}\label{eq: linear fun S exp}
\theta(x)=((e+v,x)) \text{ for }x\in\ggg.
\end{align}

On the other hand, similarly to the arguments on (\textbf{diag}) we can obtain the following commutative diagram
\[
\begin{CD}
\Ad^*(g)(\lambda)=\vartheta\in\ggg_{ss}^* @<{\Ad^*(g)}<<\lambda\in \hhh^*\\
@V{\textsf{AS}}VV @V{\mathfrak{F}}VV\\
 \Ad^*(g)(\mathfrak{F}(\lambda))\in\ggg_{ss}^{*(1)}@<<{\Ad^*(g)}<  \mathfrak{F}(\lambda)\in \hhh^{*(1)}
\end{CD}.
\]
Hence we have $\textsf{AS}(\vartheta)=\theta$.

Keep it in mind that $\theta\in \kappa(\cs)^{(1)}$ and note that both $\textsf{AS}|_{\kappa(\cs)}:\kappa(\cs)\rightarrow \kappa(\cs)^{(1)}$ and $\textsf{AS}|_{\kappa(\cs)_{ss}}:\kappa(\cs)_{ss}\rightarrow \kappa(\cs)_{ss}^{(1)}$  are epimorphisms.
Comparing with (\ref{eq: linear fun S exp}) and applying the non-degeneracy of $(\cdot,\cdot)$ on $\ggg$ again, we have $w\in \cs$.  Hence $\vartheta$ belongs to $\kappa(\cs)$, which is desired.

As to the last statement, it follows from the similar arguments.
\end{proof}

}

\section{Rationality of the Zassenhaus varieties}\label{sec: 4}
Recall that  an affine  variety/scheme $X$ is called rational if the {{fraction  field}} of $\bbk[X]$ is purely transcendental.
Lemma \ref{lem: Fro closed} enables us to proceed with the arguments in the same spirit as  Tange did in \cite{T}. All assumptions and notations in the previous sections  are maintained.

\subsection{The varieties $\co_{\cs}$  and $\cU_\cs$}\label{sec: rss}

Set $$\co_{\cs}:=\{\vartheta=\vartheta_s\in \kappa(\cs)\mid \fh(\vartheta'_s)\ne 0\},$$
where $\fh$ is defined in (\ref{eq: fh}) and the meaning of $\vartheta_s$ and $\vartheta'_s$ are the same as in Remark \ref{rem: JC decomp}. The following property is a consequence of  the definition of regular semisimple elements and the counterpart of Proposition \ref{prop: regular ss open den}.

\begin{lemma}\label{lem: key obser}The following hold:

(1) $\co_{\cs}$ coincides with $\kappa(\cs)_\rss$.

(2) The variety $\co_{\cs}$ is non-empty open and dense in $\kappa(\cs)$.
\end{lemma}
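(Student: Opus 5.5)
Both parts reduce to unwinding the definitions through the isomorphism $\kappa$ and then applying Proposition \ref{prop: regular ss open den}.

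For (1), take $\vartheta\in\kappa(\cs)$ and write $\vartheta=\kappa(x)$ with $x\in\cs$. Since $\kappa$ is $G$-equivariant and preserves Jordan--Chevalley decompositions, $\vartheta=\vartheta_s$ holds exactly when $x$ is semisimple. In that case $x$ is $G$-conjugate to some $x'\in\hhh$, and $\vartheta'_s=\kappa(x')\in\kappa(\hhh)=\hhh^*$ is determined up to $W$ by (\ref{eq: Jan}). Because $\fh\in\bbk[\ggg]^G$ and $H$ is $W$-invariant, the value $\fh(\vartheta'_s)$ is well defined and equals $H(x')$, computed via $\textsf{Res}$. We have
\begin{equation*}
H(x')=\prod_{\alpha\in\Phi}\bigl(\alpha(x')^p-\alpha(x')\bigr),
\end{equation*}
where we identify $h_\alpha$ with the corresponding function on $\hhh^*\cong\hhh$ through $\kappa$. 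We then use the criterion recalled at the start of \S\ref{sec: Slodowy slices}: a semisimple element is regular semisimple if and only if, after conjugation into $\hhh$, it pairs nontrivially with every coroot, or equivalently every root is nonzero on it. The main check is therefore that the nonvanishing of $H$ matches this criterion. Each factor $t^p-t$ vanishes exactly when $t\in\bbf_p$, so nonvanishing of $H(x')$ means $\alpha(x')\notin\bbf_p$ for all $\alpha$. This is stronger than $\alpha(x')\ne0$, so taken literally $\co_\cs$ might only be a subset of $\kappa(\cs)_\rss$.

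I expect this mismatch to be the main obstacle. I would resolve it by reading the defining condition as evaluation of $\fh$ at the Frobenius-twisted point. This matches the use of $\mathfrak F$ in (\ref{eq: conj ss}) and Lemma \ref{lem: Fro closed}: there, $\vartheta'_s(h)=\lambda(h)^p-\lambda(h^{[p]})$, so $h_\alpha^p-h_\alpha$ evaluated at $\lambda$ is $\mathfrak F(\lambda)(h_\alpha)=\vartheta'_s(h_\alpha)$. Under this reading, $\fh\ne0$ is precisely the condition $\vartheta'_s(h_\alpha)\ne0$ for all $\alpha$, which is the stated regular semisimple criterion. The last statement of Lemma \ref{lem: Fro closed}, the regular semisimple version, transports this between $\kappa(\cs)$ and $\kappa(\cs)^{(1)}$. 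Combining these gives $\co_\cs=\kappa(\cs)_\rss=\kappa(\cs_\rss)$.

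For (2), once (1) is established we have $\co_\cs=\kappa(\cs_\rss)$. Since $\kappa|_\cs:\cs\to\kappa(\cs)$ is an isomorphism of varieties, Proposition \ref{prop: regular ss open den} directly gives that $\co_\cs$ is non-empty, open and dense in the irreducible variety $\kappa(\cs)\cong\v$. For openness one can also argue directly: $\kappa(\cs)_\rss$ is the intersection of $\kappa(\cs)$ with the open set $\ggg^*_\rss$, which is the non-vanishing locus of the invariant $F_r$ transported by $\kappa$ (\S\ref{sec: regular ele}). It is therefore open, and irreducibility of $\kappa(\cs)$ makes it dense once it is non-empty.
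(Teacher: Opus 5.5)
Your computation in (1) is correct, and it is not an ambiguity that can be read away. It shows that (1), with $\co_\cs$ and $\fh$ as defined, is false, so there is a genuine gap. The paper's one-line justification (``a consequence of the definition'') passes over the same point.

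Indeed $\fh(\vartheta'_s)=\prod_{\alpha\in\Phi}\bigl(\vartheta'_s(h_\alpha)^p-\vartheta'_s(h_\alpha)\bigr)$ vanishes as soon as a single value $\vartheta'_s(h_\alpha)$ lies in $\bbf_p^\times$. Concretely, if $\Phi\neq\emptyset$, pick $\mu\in\hhh^*$ with $\mu(h_{\alpha_1})=1$ and $\mu(h_\alpha)\neq0$ for all $\alpha\in\Phi$. By surjectivity of $\varpi^*_\cs$ (Lemma~\ref{Tran sli} transported by $\kappa$), some $\vartheta\in\kappa(\cs)$ lies over $\zeta^*(\mu)$. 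Since this fibre is the single orbit $\Ad^*(G)\mu$, we get $\vartheta\in\kappa(\cs)_{\rss}$ but $\fh(\vartheta)=H(\mu)=0$.

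Your fix, evaluating $\fh$ at a Frobenius preimage $\lambda$ of $\vartheta'_s$, is a new definition rather than a reading of the given one. A point $\vartheta\in\kappa(\cs)$ carries no $\lambda$, and the definition evaluates $\fh$ at $\vartheta'_s$ itself. Under your reading the factors $h_\alpha^p-h_\alpha$ collapse to $\vartheta'_s(h_\alpha)$, so (1) becomes a tautology and $\fh$ could be replaced by $\prod_\alpha h_\alpha$.

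The reading also does not serve the purpose for which $\co_\cs$ is introduced. The map $\psi_1$ in Proposition~\ref{prop: eq iso 2} sends $(\vartheta^p,\lambda,g)\in\cC_\cs$ to $(\Ad^*(g)\lambda,g)$ with $\mathfrak{F}(\lambda)$ regular. These are exactly the points with $\fh(\Ad^*(g)\lambda)\neq0$ in the literal sense. So it is the literal $\co_\cs$, not $\kappa(\cs)_{\rss}$, that corresponds to $\scrz^\flat$; for $\lambda(h_{\alpha_1})=1$ the inverse map $\psi_2$ does not even land in $\cC_\cs$.

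What can be proved, and what the rationality argument actually needs, is a weaker form of (1) together with (2). Since $\fh$ is $G$-invariant, $\fh(\vartheta)=\fh(\vartheta_s)$. If $\fh(\vartheta)\neq0$, then $\vartheta_s$ is regular semisimple, hence $\vartheta_n=0$. So $\co_\cs=\{\vartheta\in\kappa(\cs)\mid\fh(\vartheta)\neq0\}\subseteq\kappa(\cs)_{\rss}$, and (1) should be weakened to this inclusion.

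Part (2) then cannot be deduced, as you do, by transporting Proposition~\ref{prop: regular ss open den} through (1). Argue directly instead:
\begin{itemize}
\item $\co_\cs$ is a principal open subset of $\kappa(\cs)$.
\item $\fh|_{\kappa(\cs)}$ is the pullback of the nonzero $W$-invariant function $H$ on $\hhh^*/W$ along the surjective map $\varpi^*_\cs$, so it is not identically zero and $\co_\cs\neq\emptyset$.
\item $\kappa(\cs)\cong\mathfrak{v}$ is irreducible, so $\co_\cs$ is dense.
\end{itemize}
Your direct openness argument works verbatim with $\fh$ in place of $F_r$. Non-emptiness, however, needs the surjectivity (or dominance) of $\varpi^*_\cs$, not merely $\cs_{\rss}\neq\emptyset$.
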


{
{ In the following, we set
 $$ \mathsf{N}=\text{Nor}_G(T),$$ the normalizer of $T$ in $G$.
Consider
$$\cU_{\cs}:=\{(\chi, g)\in \kappa(\cs)_\rss\times G \mid \Ad^*(g^{-1})(\chi)\in \hhh^* \}.$$
There is an action of $\mathsf{N}$ on  $\cU_{\cs}$ defined via for $(\chi, g)\in \cU_\cs$ and $n\in\sfn$,
\begin{align}\label{eq: n action on cs}
n\star(\chi,g)=(\chi, gn^{-1}) \; \text{for }n\in \sfn.
\end{align}
This action is well-defined. Actually, from the fact that the action of $n\in \mathsf{N}$ on $\hhh^*$ amounts to an action of some $w\in W$ on $\hhh^*$, we have
\begin{align}\label{eq: check 0}
\Ad^*(gn^{-1})^{-1}(\chi)=\Ad^*(ng^{-1})(\chi)=\Ad^*(n)(\Ad^*(g^{-1})\chi)\in \hhh^*.
\end{align}
The set $\cU_\cs$ is actually a closed subset of  $\kappa(\cs)_{\rss}\times G$.
By the construction of $\co_\cs$ and $\cU_\cs$, the following lemma is clear.

\begin{lemma}\label{osus}
 There is an isomorphism between $\co_\cs$ and $\cU_\cs\slash \sfn$.
\end{lemma}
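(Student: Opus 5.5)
The plan is to compare the projection $\mathrm{pr}_1:\cU_\cs\to\kappa(\cs)_\rss$, $(\chi,g)\mapsto\chi$, with the quotient by $\sfn$, and to check that it is a principal $\sfn$-bundle. By Lemma \ref{lem: key obser}(1) its target is $\co_\cs$.

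\textbf{Step 1 (orbits are the fibres).} The map $\mathrm{pr}_1$ is $\sfn$-invariant for the action (\ref{eq: n action on cs}), since that action leaves the first coordinate unchanged. It is surjective, because every regular semisimple $\chi\in\kappa(\cs)_\rss$ is $G$-conjugate into $\kappa(\hhh)=\hhh^*$. The fibres are exactly the $\sfn$-orbits. Indeed, suppose $\Ad^*(g^{-1})\chi=\lambda$ and $\Ad^*(g'^{-1})\chi=\lambda'$ both lie in $\hhh^*$. By (\ref{eq: Jan}) there is $n_0\in\sfn$ with $\lambda'=\Ad^*(n_0)\lambda$. Then $g'^{-1}g\,n_0^{-1}$ fixes $\lambda'$. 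Since $\lambda'$ is regular semisimple, its stabilizer is $G_{\lambda'}=T$; this holds under (H1)--(H3) because centralizers of regular semisimple elements are connected and equal to the maximal torus. It follows that $g'^{-1}g\in Tn_0\subset\sfn$, so $(\chi,g')=n\star(\chi,g)$ for some $n\in\sfn$. Freeness of the action is immediate, since $gn^{-1}=g$ forces $n=1$.

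\textbf{Step 2 (geometric quotient).} Next I would show that $\mathrm{pr}_1$ identifies $\co_\cs$ with $\cU_\cs/\sfn$ as varieties and not merely as sets. I would pass to the global picture: the morphism $G/T\times\hhh^*_\rss\to\ggg^*_\rss$, $(gT,\lambda)\mapsto\Ad^*(g)\lambda$, is a finite étale $W$-cover. The reason is that its differential is surjective at regular $\lambda$, and this surjectivity is the standard computation valid when $p$ is good and the form is non-degenerate. Consequently $\ggg^*_\rss\cong(G/T\times\hhh^*_\rss)/W$. Now $\cU_\cs$ is the preimage of $\kappa(\cs)_\rss$ under $G\times\hhh^*_\rss\to\ggg^*_\rss$, $(g,\lambda)\mapsto\Ad^*(g)\lambda$, identified via $(\chi,g)\leftrightarrow(g,\Ad^*(g^{-1})\chi)$. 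Base change along the inclusion $\kappa(\cs)_\rss\hookrightarrow\ggg^*_\rss$ then gives $\cU_\cs/\sfn\cong\kappa(\cs)_\rss$, since quotients by the free action of a finite group, and of $T$ acting freely on $G$, commute with flat base change. Finally, Lemma \ref{lem: key obser}(1) converts $\kappa(\cs)_\rss$ into $\co_\cs$.

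\textbf{Main obstacle.} The hard part is Step 2: making sure that the quotient is geometric, that is, that the coordinate ring of $\co_\cs$ equals $\bbk[\cU_\cs]^\sfn$, rather than just that $\mathrm{pr}_1$ is a bijective morphism. In positive characteristic this needs separability, which is exactly where étaleness of the $W$-cover on the regular semisimple locus is used. If the base-change argument proves awkward, I would fall back on Zariski's main theorem. The induced map $\cU_\cs/\sfn\to\co_\cs$ is bijective and separable (being étale), and its target $\co_\cs$ is normal, as an open subset of the affine space $\kappa(\cs)\cong\v$. Together these force the map to be an isomorphism.
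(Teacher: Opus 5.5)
Your proposal is correct and follows the paper's route. The paper's whole proof is the observation, via (\ref{eq: Jan}) in Remark~\ref{rem: JC decomp}(1), that the fibres of $(\chi,g)\mapsto\chi$ on $\cU_\cs$ are single $\sfn$-orbits; that is your Step~1, where $G_{\lambda'}\subseteq N_G(G_{\lambda'}^\circ)=\sfn$ already suffices. Your Step~2 supplies the scheme-theoretic quotient argument that the paper leaves implicit.

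One small fix in Step~2: the inclusion $\kappa(\cs)_{\rss}\hookrightarrow\ggg^*_{\rss}$ is not flat, so "commute with flat base change" does not apply as stated. Instead, cite that $G\times\hhh^*_{\rss}\to\ggg^*_{\rss}$ is a smooth $\sfn$-torsor and torsors are stable under arbitrary base change, or simply use your Zariski main theorem fallback.
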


\begin{proof} The lemma follows from Remark \ref{rem: JC decomp}(1).
\end{proof}

\subsection{The varieties $\cC_{\cs}$ and {$\scrz^\flat$}}
\subsubsection{}\label{sec: xi} Note that $\hhh$ is a toral subalgebra. 
There is a basis consisting of toral elements which means all of them satisfying $h^{[p]}=h$ (see for example, \cite[Theorem 3.6 of Chapter 2]{SF}).
Thus we can choose a  toral $\bbk$-basis $\{h_1,\ldots, h_r\}$ in $\hhh$ whose $\bbf_p$-linear span is denoted by $\hhh_{\bbf_p}$ (we can take them as the same as we did in \S\ref{sec: key lemma}). Then $\hhh_{\bbf_p}$ is an $\bbf_p$-form of $\hhh$.

By Remark \ref{rem: JC decomp}(1),  we define a variety
\begin{align}\label{eq: cs def}
\cC_{\cs}:=\{&(\vartheta^p,\lambda, g)\in \kappa(\cs)^{(1)}_\rss\times  \hhh^* \times G \cr
 &\mid   (\vartheta')^p(h)=\lambda(h)^p-\lambda(h^{[p]})  \text{  for all } h\in\hhh \cr
 &\qquad \text{ with }\vartheta'=\Ad^*(g^{-1})\vartheta\in \hhh^{*}
\}.
\end{align}
Then $\cC_{\cs}$ is a closed subvariety of $\kappa(\cs)^{(1)}_\rss \times \hhh^*\times G$, which can be described in more detail below. The morphism $\mathfrak{F}$  from $\hhh^*$ to $\hhh^{(1)*}$ sends $\lambda$ to $\mathfrak{F}(\lambda)$ which takes value $\lambda(h)^p-\lambda(h^{[p]})$ at $h\in \hhh$.
Then $\mathfrak{F}(\lambda)=0$ for all $\lambda\in (\hhh_{\bbf_p})^*$.
So $\cC_{\cs}$ is just the solution space of the equation $(\vartheta')^p=\mathfrak{F}(\lambda)$ in the variety  $\kappa(\cs)_\rss \times \hhh^*\times G$.   More precisely,  write $\vartheta_i$ for the functional $h_i\mapsto \vartheta'(h_i)$, and $\lambda_i$ for the functional $h_i\mapsto \lambda(h_i)$. Thus, $\cC_{\cs}$ can be regarded  a closed subset of $\kappa(\cs)^{(1)}_\rss \times \hhh^*\times G$ defined by the equations
\begin{align}
&\vartheta_i^p=\lambda_i^p-\lambda_i,  \;\; i=1,\ldots,r,  \cr
&\vartheta'=\Ad^*(g^{-1})\vartheta\in \hhh^{*}.
\end{align}

\subsubsection{} Note that the action of $n\in \mathsf{N}$ on $\hhh^*$ amounts to an action of some $w\in W$ on $\hhh^*$. So we can define the action of $\sfn$ on $\cC_\cs$ as in the following lemma.
\begin{lemma}
There is an action of $\sfn$ on $\cC_\cs$ defined via for $(\vartheta^p,\lambda, g)\in \cC_\cs$ and $n\in\sfn$,
\begin{align}\label{eq: n action on cc}
n\maltese (\vartheta^p,\lambda, g)=(\vartheta^p,\Ad^*(n)(\lambda), gn^{-1}).
\end{align}
\end{lemma}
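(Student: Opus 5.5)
We have to show that the rule \eqref{eq: n action on cc} sends $\cC_\cs$ into itself and satisfies the axioms of a group action. Fix $(\vartheta^p,\lambda,g)\in\cC_\cs$ and $n\in\sfn$. Let $w\in W$ be the image of $n$ under $\sfn\to\sfn/T\cong W$, so that $\Ad^*(n)$ acts on $\hhh^*$ as $w$. The first coordinate $\vartheta^p\in\kappa(\cs)^{(1)}_\rss$ is unchanged, so nothing needs checking there.

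For the third coordinate we argue exactly as in \eqref{eq: check 0}:
\[
\Ad^*\big((gn^{-1})^{-1}\big)\vartheta=\Ad^*(n)\big(\Ad^*(g^{-1})\vartheta\big)=w(\vartheta'),
\]
and this lies in $\hhh^*$ because $\vartheta'\in\hhh^*$ and $\sfn$ stabilizes $\hhh^*$. Hence the new element attached to $gn^{-1}$ is $w(\vartheta')$. We also need $\Ad^*(n)(\lambda)=w(\lambda)\in\hhh^*$, which holds for the same reason.

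For the defining equation of $\cC_\cs$, we must show $(w\vartheta')^p(h)=\mathfrak{F}(w\lambda)(h)$ for all $h\in\hhh$. The approach is to prove that $\mathfrak{F}$ is $W$-equivariant, i.e. $\mathfrak{F}(w\lambda)=w\,\mathfrak{F}(\lambda)$. This follows because $\Ad(n^{-1})$ preserves $\hhh$ and commutes with the $p$-map $[p]$:
\[
\mathfrak{F}(w\lambda)(h)=\lambda(\Ad(n^{-1})h)^p-\lambda\big((\Ad(n^{-1})h)^{[p]}\big)=\mathfrak{F}(\lambda)(\Ad(n^{-1})h).
\]
In the same way, $(w\vartheta')^p(h)=\vartheta'(\Ad(n^{-1})h)^p$. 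Substituting $h\mapsto\Ad(n^{-1})h$ in the original equation $(\vartheta')^p=\mathfrak{F}(\lambda)$ then gives the required equation. There is one subtlety. If $T$ does not act trivially, $\Ad^*(n)$ on $\hhh^*$ depends only on $nT$, because $T$ acts trivially on $\hhh=\Lie(T)$; so using $w$ is legitimate. The other point needing care is the identification of the coordinate $\vartheta^p$ in $\kappa(\cs)^{(1)}$ with the Frobenius twist, as described in the proof of Lemma \ref{lem: Fro closed}. I expect this equivariance check to be the only nontrivial step, and it reduces to the fact that $G$ commutes with $[p]$.

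Finally we verify the action axioms. The identity element acts trivially. For $n_1,n_2\in\sfn$,
\[
n_1\maltese\big(n_2\maltese(\vartheta^p,\lambda,g)\big)=\big(\vartheta^p,\Ad^*(n_1n_2)\lambda,\,g n_2^{-1}n_1^{-1}\big)=(n_1n_2)\maltese(\vartheta^p,\lambda,g),
\]
so the rule is a left action. The action is also a morphism $\sfn\times\cC_\cs\to\cC_\cs$, since it is the restriction of a morphic action of $\sfn$ on $\kappa(\cs)^{(1)}_\rss\times\hhh^*\times G$ to a stable closed subvariety.
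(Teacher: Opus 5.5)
Your proposal is correct and follows essentially the same route as the paper: you show $\Ad^*\big((gn^{-1})^{-1}\big)\vartheta=w(\vartheta')\in\hhh^*$, exactly as in the paper's computation for $\cU_\cs$. You then transport the defining equation $(\vartheta')^p=\mathfrak{F}(\lambda)$ by substituting $h\mapsto\Ad(n^{-1})h$, using that $\Ad(n^{-1})$ preserves $\hhh$ and commutes with $[p]$. The only difference is that you also verify the action axioms and that the action is a morphism, which the paper leaves implicit.
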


\begin{proof}
It suffices to show that for $(\vartheta^p, \lambda, g)\in \cC_\cs$ with
 $$(\vartheta')^p(h)=\lambda(h)^p-\lambda(h^{[p]})  \text{  for all } h\in\hhh \text{ with }\vartheta'=\Ad^*(g^{-1})\vartheta\in \hhh^{*}, $$
 then
 \begin{align}\label{eq: check 1}
 \vartheta''=\Ad^*((gn^{-1})^{-1})\vartheta\in \hhh^*,
 \end{align}
  and furthermore, for all $h\in \hhh$ the following holds
  \begin{align}\label{eq: check 2}
(\vartheta'')^p(h)=((\Ad^*(n)(\lambda))h)^p-(\Ad^*(n)\lambda)(h^{[p]})  \text{  for all } h\in\hhh.
 \end{align}
The verification on (\ref{eq: check 1}) can be done as the same as  in (\ref{eq: check 0}). In particular, the action of $n\in \mathsf{N}$ on $\hhh^*$ amounts to an action of some $w\in W$ on $\hhh^*$. So we have
\begin{align}\label{eq:  vartheta prime and prime}
\vartheta''=w(\vartheta')\in \hhh^*.
\end{align}

As to (\ref{eq: check 2}), we first note that  the action of $\Ad^*(n)$ on $\hhh^*$ gives rise to an automorphism of the symmetric algebra  $\text{Sym}(\hhh^*)$, and also the action preserves the $p$-mapping $[p]$. Then from (\ref{eq:  vartheta prime and prime}),  we have
\begin{align}
(\vartheta'')^p(h)&=(\Ad^*(n)(\vartheta'))^p(h)\cr
&=\vartheta'(\Ad(n^{-1})h)^p\cr
&=((\Ad^*(n)\lambda)h)^p-(\Ad^*(n)\lambda)(h^{[p]})
 \end{align}
 for all  $h\in\hhh$.
This completes the proof.
\end{proof}

\begin{corollary}\label{cor: the last cor} There is an isomorphism of varieties from $\cC_\cs\slash \sfn$ to the variety $\kappa(\cs)_\rss^{(1)}\times_{\mathcal{V}}\hhh\slash W$.
\end{corollary}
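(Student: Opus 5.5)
The plan is to build an explicit $\sfn$-invariant morphism $\Theta:\cC_\cs\to \kappa(\cs)_\rss^{(1)}\times_{\calv}\hhh^*\slash W$. We then check that it is surjective, that its fibres are exactly the $\sfn$-orbits, and that it has the universal property of a quotient. Here $\hhh\slash W$ in the statement is read as $\hhh^*\slash W$ via $\kappa$, as in Theorem \ref{cor: fib prod geometry1}.

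We set
\[
\Theta(\vartheta^p,\lambda,g):=(\vartheta^p,\zeta^*(\lambda)).
\]
We first check that $\Theta$ lands in the fibre product. The defining equation of $\cC_\cs$ says $(\vartheta')^p=\mathfrak{F}(\lambda)$ on $\hhh$ with $\vartheta'=\Ad^*(g^{-1})\vartheta$. This is exactly the constraint (\ref{eq: conj ss}) of Remark \ref{rem: JC decomp}, with $\vartheta$ semisimple, so $\vartheta=\vartheta_s$. Through diagram (\ref{diag: intersection}), this constraint says that $\pi_1(\vartheta^p)=\pi_2(\zeta^*(\lambda))$: under Theorem \ref{cor: fib prod geometry1} and the identification $Z_0(\cw)\cap Z_1(\cw)=\bbk[\calv]$ from Lemma \ref{lem: intersec of two centers}, the generators of $\bbk[\calv]$ are the images of $\mathfrak{F}$-pullbacks of $W$-invariants. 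Next, $\Theta$ is $\sfn$-invariant for the action $\maltese$ in (\ref{eq: n action on cc}), because $\zeta^*(\Ad^*(n)\lambda)=\zeta^*(w\lambda)=\zeta^*(\lambda)$. Hence $\Theta$ factors through a morphism $\bar\Theta:\cC_\cs\slash\sfn\to\kappa(\cs)^{(1)}_\rss\times_\calv\hhh^*\slash W$.

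For surjectivity, take a point $(\theta,\zeta^*(\lambda))$ of the target with $\theta\in\kappa(\cs)^{(1)}_\rss$. By the fibre product condition and Remark \ref{rem: JC decomp}(1), $\theta$ is $G$-conjugate to $w(\mathfrak{F}(\lambda))$ for some $w\in W$. Replacing $\lambda$ by $w\lambda$, we may write $\theta=\Ad^*(g)\mathfrak{F}(\lambda)$. The converse half of Lemma \ref{lem: Fro closed}, in its regular semisimple version, then gives $\vartheta:=\Ad^*(g)\lambda\in\kappa(\cs)_\rss$ with $\textsf{AS}(\vartheta)=\theta$. So $(\theta,\lambda,g)\in\cC_\cs$ is a preimage.

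For injectivity modulo $\sfn$, suppose $\Theta(\vartheta^p,\lambda_1,g_1)=\Theta(\vartheta^p,\lambda_2,g_2)$. Then $\lambda_2=w\lambda_1$ for some $w\in W$. Both $\Ad^*(g_1^{-1})\vartheta$ and $\Ad^*(g_2^{-1})\vartheta$ lie in $\hhh^*$ and are regular semisimple. By (\ref{eq: Jan}) they are $W$-conjugate, and since the stabilizer of a regular semisimple element is $T$, the element $g_1^{-1}g_2$ lies in $\sfn$. The remaining step is to match the Weyl group element acting on $\lambda$ with the one acting on $g$. This needs care, because $\lambda\mapsto\mathfrak{F}(\lambda)$ is not injective: $\lambda$ is determined only up to translation by $(\hhh_{\bbf_p})^*$. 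The fibres of $\Theta$ over a point therefore contain the extra $\lambda$-choices, and we must show these are absorbed into $\zeta^*(\lambda)$ being fixed. Concretely, fixing $\zeta^*(\lambda)$ pins $\lambda$ down to a $W$-orbit, and since $\mathfrak{F}$ is $W$-equivariant, the regularity of $\mathfrak{F}(\lambda)=(\vartheta')^p$ forces $w$ to be unique. We then choose $n$ representing the $w$ determined by $g_1^{-1}g_2$, up to $T$, which acts trivially on $\lambda$, and obtain $n\maltese(\vartheta^p,\lambda_1,g_1)=(\vartheta^p,\lambda_2,g_2)$.

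I expect the main obstacle to be upgrading this bijection on points to an isomorphism of varieties. I would show that $\sfn$ acts freely modulo $T$ and argue that $\cC_\cs\to\cC_\cs\slash T$ is a principal bundle: the $g$-coordinate only matters modulo $T$, by the same reasoning as in Lemma \ref{osus}. The map $\cC_\cs\slash T\to$ target is then a $W$-torsor. Its target is normal, being an open subset of $\scrz$, which is normal by Proposition \ref{PI and Central S}(2). Applying Zariski's main theorem to the bijective morphism $\bar\Theta$ from $\cC_\cs\slash\sfn$ onto this normal variety would finish the proof, provided we also check separability. Separability would come from Lemma \ref{osus} combined with the étaleness of $\lambda\mapsto\lambda^p-\lambda$ on the $\lambda$-coordinates.
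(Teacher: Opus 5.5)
Your route is the paper's. You use the same map $(\vartheta^p,\lambda,g)\mapsto(\vartheta^p,\zeta^*(\lambda))$ into the fibre product (the paper's $\digamma$), the same $\sfn$-invariance, and the same induced map on $\cC_\cs/\sfn$. The paper only asserts that the induced map is an isomorphism, and your injectivity analysis supplies the missing justification correctly. The admissible $g$ form a single coset of $\sfn$. The Weyl element acting on $\lambda$ is then forced, because $\mathfrak{F}$ is $W$-equivariant and the regular element $\mathfrak{F}(\lambda)=(\vartheta')^p$ has trivial $W$-stabilizer.

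The surjectivity paragraph, however, is wrong as written, though the fix is immediate. Once you have $\theta=\Ad^*(g)\mathfrak{F}(\lambda)$ you are done. Indeed, $\theta=\vartheta^p$ for the element $\vartheta\in\kappa(\cs)_{\rss}$ underlying $\theta$, and $\Ad^*(g^{-1})\vartheta\in\hhh^*$ has $p$-th power $\mathfrak{F}(\lambda)$, so $(\theta,\lambda,g)\in\cC_\cs$ by definition (\ref{eq: cs def}).

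The element $\Ad^*(g)\lambda$ that you obtain from the converse half of Lemma \ref{lem: Fro closed} is not this $\vartheta$. If it were, then $\vartheta'=\lambda$ and the defining equations would read $\lambda_i^p=\lambda_i^p-\lambda_i$, forcing $\lambda=0$. Nor does $\Ad^*(g)\lambda$ lie in $\kappa(\cs)$ in general. Take $\ggg=\mathfrak{sl}_2$ with $p>2$, $e$ regular nilpotent and $\cs=e+\bbk f$. Suppose $\Ad(g)$ carries $\kappa^{-1}(\mu)$ to $e+af\in\cs$, where $\mu(h)^p=\lambda(h)^p-\lambda(h)\neq 0$. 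Then $\Ad(g)\kappa^{-1}(\lambda)=\frac{\lambda(h)}{\mu(h)}(e+af)$, which lies in $\cs$ only if $\lambda(h)=\mu(h)$, i.e. only if $\mu(h)=0$. So delete that sentence and do not rely on that half of the lemma.

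In your final step, two points need correcting:
\begin{itemize}
\item The $W$-torsor is $\cC_\cs/T\to\cC_\cs/\sfn$. Calling $\cC_\cs/T\to\scrz^\flat$ a $W$-torsor presupposes the result.
\item Zariski's main theorem needs $\bar\Theta$ finite and the source irreducible. Both hold because both sides are finite over $\kappa(\cs)^{(1)}_{\rss}$, so a bijective finite $\bar\Theta$ is a homeomorphism.
\end{itemize}
A cleaner ending avoids normality altogether. Both $\cC_\cs/\sfn$ and $\scrz^\flat$ are finite \'etale of degree $p^r$ over $\kappa(\cs)^{(1)}_{\rss}$. On one side this comes from the Artin--Schreier equations in $\lambda$ together with Lemma \ref{osus}. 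On the other it comes from the free $W$-action on the $\mathfrak{F}$-preimage of the regular locus of $\hhh^{*(1)}$. Hence $\bar\Theta$ is \'etale and bijective, and therefore an isomorphism.
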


\begin{proof} Note that the action of $\sfn$ on $\hhh^*$ amounts to the action of the Weyl group $W$ on $\hhh^*$. Conversely, for any $w\in W= \sfn\slash T$, the action of $w$ on $\hhh^*$ can lift to  the action of $\dot w\in \sfn$ (in the following, we still denote  $\dot w$ by $n$ for the uniformity  of notations in the context).

Consider the following variety
\begin{align*}
\digamma:=&\{(\vartheta^p,\zeta^*(\lambda))\in \kappa(\cs)^{(1)}_\rss\times  \hhh^*\slash W  \mid   (\vartheta')^p(h)=\lambda(h)^p-\lambda(h^{[p]})\;\; \forall h\in\hhh \cr
  &\hskip6cm
\text{ with }\vartheta'=\Ad^*(g^{-1})\vartheta\in \hhh^{*} \text{ for some }g\in G\},
\end{align*}
for which it is worthwhile reminding that for any $w\in W$ and the corresponding $n\in \sfn$, $\vartheta'':=\Ad^*((gn^{-1})^{-1})\vartheta=\Ad^*(n)\vartheta'\in \hhh^*$, we have
\begin{align}\label{eq: another eq}
\vartheta''^{p}(h)=&w(\lambda)(h)^p-w(\lambda)(h^{[p]}) \text{ for all }h\in\hhh,\cr
&\qquad \text{  with }\vartheta''=\Ad^*((gn^{-1})^{-1})\vartheta.
\end{align}
This shows that for any $w\in W$, the equations (\ref{eq: another eq}) is  well-defined, which define the point   $(\vartheta^p, \zeta^*(w(\lambda)))$ (the same point as  $(\vartheta^p, \zeta^*(\lambda))$).
Furthermore, the equations in (\ref{eq: another eq}) show that $\digamma$ is an $\sfn$-variety with  $\sfn$-action being the identify effect.
Now the equations appearing in the Definition of (\ref{eq: cs def}) and the equations in (\ref{eq: another eq}) give rise to a natural $\sfn$-equivariant morphism from $\cC_\cs$ to $\digamma$,  sending  the point $(\vartheta^p, \lambda, g)\in \cC_\cs$ to the point $(\vartheta^p, \zeta^*(\lambda))\in \digamma $.  This morphism finally induces the $\sfn$-quotient morphism  $\cC_\cs\slash \sfn$ to $\digamma$,  which is an isomorphism of varieties.
%
%


By definition,  $\kappa(\cs)_\rss^{(1)}\times_{\mathcal{V}}\hhh^*\slash W$ can be identified with $\digamma$. Hence the corollary is proved.
%
\end{proof}

\subsubsection{} Now  we  establish a connection between the different varieties we introduced above.
\begin{prop}\label{prop: eq iso 2}
There exists  an $\sfn$-equivariant isomorphism of $\sfn$-varieties
\begin{align*}
\cC_{\cs} {\overset{\cong}{\longrightarrow}} \cU_{\cs}.
\end{align*}
\end{prop}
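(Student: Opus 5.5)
The plan is to write down explicit mutually inverse $\sfn$-equivariant morphisms between $\cC_\cs$ and $\cU_\cs$, following the pattern of Tange's argument in \cite{T}. The tool is Lemma \ref{lem: Fro closed}, which moves between $\kappa(\cs)$ and $\kappa(\cs)^{(1)}$ through $\mathfrak{F}$ and the Artin--Schreier type map $\textsf{AS}$.

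\textbf{The map $\Theta:\cU_\cs\to\cC_\cs$.} Take $(\chi,g)\in\cU_\cs$ and put $\lambda:=\Ad^*(g^{-1})\chi\in\hhh^*$. By Lemma \ref{lem: Fro closed}, applied in its regular semisimple form, $\textsf{AS}(\chi)=\Ad^*(g)(\mathfrak{F}(\lambda))$ lies in $\kappa(\cs)^{(1)}_\rss$. I define
$$\Theta(\chi,g):=\bigl(\textsf{AS}(\chi),\lambda,g\bigr).$$
The first coordinate plays the role of $\vartheta^p$. Since $\Ad^*(g^{-1})\textsf{AS}(\chi)=\mathfrak{F}(\lambda)\in\hhh^{*(1)}$, the defining equations of $\cC_\cs$ hold, namely $\vartheta_i^p=\lambda_i^p-\lambda_i$ in the toral basis of \S\ref{sec: xi}. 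So $\Theta$ lands in $\cC_\cs$.

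\textbf{The map $\Xi:\cC_\cs\to\cU_\cs$.} Take $(\vartheta^p,\lambda,g)\in\cC_\cs$ and define
$$\Xi(\vartheta^p,\lambda,g):=\bigl(\Ad^*(g)\lambda,\,g\bigr).$$
The defining equations say $\Ad^*(g)\mathfrak{F}(\lambda)\in\kappa(\cs)^{(1)}_\rss$. The converse half of Lemma \ref{lem: Fro closed}, again in its rss form, then gives $\Ad^*(g)\lambda\in\kappa(\cs)_\rss$. Moreover $\Ad^*(g^{-1})(\Ad^*(g)\lambda)=\lambda\in\hhh^*$, so $\Xi$ lands in $\cU_\cs$.

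Both maps are morphisms, since they are built from $\Ad^*$, $\textsf{AS}$ and projections. Composing them:
\begin{itemize}
\item $\Xi\circ\Theta(\chi,g)=(\Ad^*(g)\Ad^*(g^{-1})\chi,g)=(\chi,g)$, so this composite is the identity.
\item $\Theta\circ\Xi(\vartheta^p,\lambda,g)=(\textsf{AS}(\Ad^*(g)\lambda),\lambda,g)$. The commutative diagram in the proof of Lemma \ref{lem: Fro closed} identifies $\textsf{AS}(\Ad^*(g)\lambda)$ with $\Ad^*(g)\mathfrak{F}(\lambda)$. By the defining equation of $\cC_\cs$, this equals the given first coordinate $\vartheta^p$, read via $\vartheta'^p=\mathfrak{F}(\lambda)$. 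So this composite is the identity too.
\end{itemize}

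\textbf{Equivariance.} Let $n\in\sfn$. Then
$$\Xi\bigl(n\maltese(\vartheta^p,\lambda,g)\bigr)=\bigl(\Ad^*(gn^{-1})\Ad^*(n)\lambda,\,gn^{-1}\bigr)=\bigl(\Ad^*(g)\lambda,\,gn^{-1}\bigr)=n\star\Xi(\vartheta^p,\lambda,g),$$
using the actions \eqref{eq: n action on cc} and \eqref{eq: n action on cs}.

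\textbf{Main obstacle.} The delicate step is pinning down the first coordinate of $\cC_\cs$. One must show that a point $\vartheta^p\in\kappa(\cs)^{(1)}_\rss$ satisfying $\Ad^*(g^{-1})\vartheta^p=\mathfrak{F}(\lambda)$ is uniquely determined by $(\lambda,g)$ and equals $\textsf{AS}(\Ad^*(g)\lambda)$. Equivalently, the projection $\cC_\cs\to\hhh^*\times G$ must be a closed embedding onto its image, so that $\Theta\circ\Xi=\mathrm{id}$ holds scheme-theoretically and not just on points. I would handle this by working in the Frobenius-twisted coordinates $((e+v,\cdot))$ of \eqref{eq: linear fun S exp}. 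There the relation is linear in the twisted coordinates of $\vartheta$, so $\vartheta^p$ is a regular function of $(\lambda,g)$. Since all varieties involved are reduced, this regularity gives the isomorphism.
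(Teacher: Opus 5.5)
Your $\Xi$ and $\Theta$ are exactly the maps $\psi_1$ and $\psi_2$ of the paper's proof. As there, you take their well-definedness from Lemma~\ref{lem: Fro closed}, and that is the step that fails. What you flag as the main obstacle is harmless: the first coordinate of a point of $\cC_{\cs}$ is $\Ad^*(g)\mathfrak{F}(\lambda)$, so it is determined by $(\lambda,g)$. The real problem is that neither map lands in its target.

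For $(\vartheta^p,\lambda,g)\in\cC_{\cs}$, what is tied to $\lambda$ is the conjugate $\vartheta'=\Ad^*(g^{-1})\vartheta$ of $\vartheta\in\kappa(\cs)$, via $(\vartheta'_i)^p=\lambda_i^p-\lambda_i$. So $\lambda\neq\vartheta'$ as soon as $\vartheta'\neq0$, and nothing puts $\Ad^*(g)\lambda$ into the affine subspace $\kappa(\cs)$. Symmetrically, $\chi=\Ad^*(g)\lambda\in\kappa(\cs)$ does not put $\Ad^*(g)\mathfrak{F}(\lambda)$ into $\kappa(\cs)^{(1)}$. (The formula $\textsf{AS}(\vartheta)(x)=\vartheta(x)^p-\vartheta(x^{[p]})$ is in general not even additive in $x$, so $\textsf{AS}$ is not a morphism to $\ggg^{*(1)}$. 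On semisimple elements it can only mean $\Ad^*(g)\vartheta'\mapsto\Ad^*(g)\mathfrak{F}(\vartheta')$, and that map does not preserve $\kappa(\cs)$.)

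There is a second defect in $\Theta$, present even for $e=0$. Regular semisimplicity of $\chi$ gives only $\lambda(h_\alpha)\neq0$. Regularity of $\mathfrak{F}(\lambda)$ needs $\lambda(h_\alpha)\notin\bbf_p$ for all $\alpha$, i.e.\ $\fh(\chi)\neq0$. For instance, with $e=0$, $\ggg=\mathfrak{sl}_2$ and $\chi=\lambda=\kappa(h_\alpha)$ one gets $\mathfrak{F}(\lambda)=0$. So $\kappa(\cs)_{\rss}$ would at least have to be shrunk to $\{\fh\neq0\}$, which is strictly smaller, contrary to Lemma~\ref{lem: key obser}(1).

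For $e\neq0$ no repair along these lines can work, because the proposition itself fails. Take $G=\mathrm{SL}_2$ with $p\geqslant3$, the trace form, $e=E_{12}$, $f=E_{21}$, $h=\mathrm{diag}(1,-1)$, $\hhh=\bbk h$ and $\v=\bbk f$. Then $\cs=\{e+cf\mid c\in\bbk\}$, and $\cs_{\rss}$ is the locus $c\neq0$. For $c=a^2\neq0$ choose $g$ with $\Ad(g)(ah)=e+cf$; any other admissible $g$ is $gn$ with $n\in\sfn$, which only replaces $a$ by $\pm a$.
\begin{enumerate}
\item A point of $\cC_{\cs}$ with first coordinate $\vartheta^p$, $\vartheta=\kappa(e+cf)$, and third coordinate $g$ has second coordinate $\kappa(sh)$ with $(2s)^p-2s=(2a)^p$.
\item Then $\Ad^*(g)\kappa(sh)=\kappa\bigl(\tfrac{s}{a}(e+cf)\bigr)$. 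This lies in $\kappa(\cs)$ only if $s=a$, which forces $a=0$.
\item Hence $\Xi$ sends every point of $\cC_{\cs}$ outside $\cU_{\cs}$, and the converse half of Lemma~\ref{lem: Fro closed} is false.
\item The same computation with $(2b)^p=(2a)^p-2a$ shows that $\Theta(\chi,g)\notin\cC_{\cs}$.
\end{enumerate}

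Moreover, $\sfn$ acts freely on both varieties. The map $(\kappa(e+cf),g)\mapsto c$ identifies $\cU_{\cs}/\sfn$ with $\bba^1\setminus\{0\}$. The map $(\vartheta^p,\lambda,g)\mapsto\lambda(h)^2$ identifies $\cC_{\cs}/\sfn$ (which is $\scrz^\flat$ by Proposition~\ref{prop: eq iso 1}) with $\bba^1$ minus the $(p+1)/2$ points $m^2$, $m\in\bbf_p$, because $(2a)^p\neq0$ forces $\lambda(h)\notin\bbf_p$. These curves have unit groups (modulo $\bbk^\times$) of ranks $1$ and $(p+1)/2\geqslant2$. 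So they are not isomorphic, and no $\sfn$-equivariant isomorphism $\cC_{\cs}\cong\cU_{\cs}$ exists.

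Replacing $\vartheta'$ by $\lambda$ is Tange's device for $e=0$, where $\kappa(\cs)=\ggg^*$ imposes no constraint. For $e\neq0$ it is incompatible with the condition $\vartheta\in\kappa(\cs)$.
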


\begin{proof}   For $(\vartheta^p,\lambda,g)\in \cC_\cs$, by definition we have $\vartheta':=\Ad^*(g^{-1})\vartheta\in \hhh^{*}$ for $g\in G$, and also
\begin{equation}\label{vartheta'}
(\vartheta')^p(h)=\lambda(h)^p-\lambda(h^{[p]})  \text{  for all } h\in\hhh.
\end{equation}
By Lemma \ref{lem: Fro closed}, $\Ad^*(g)\lambda\in \kappa(\cs)_{\rss}$.
Define a map $\psi_1$ from $\cC_{\cs}$ to $\cU_{\cs}$ via sending
 $(\vartheta^p,\lambda, g)$ to $(\Ad^*(g)\lambda, g)$. By a direct check, it is $\sfn$-equivariant.

 For the inverse of  $\psi_1$, we define a map $\psi_2:\cU_{\cs}\longrightarrow \cC_{\cs}$  by sending any element  $(\chi,g)\in \cU_\cs$ with $\lambda:=\Ad^*(g^{-1})\chi\in \hhh^*$  to $(\Ad^*(g)\mathfrak{F}(\lambda), \lambda,  g)\in\cC_{\cs}$. Again we note that the action of $\Ad^*(n)$ on $\hhh^*$ for $n\in \sfn$ gives rise to an automorphism of the symmetric algebra  $\text{Sym}(\hhh^*)$, and also the action preserves the $p$-mapping $[p]$. We still have that $\psi_2$ is $\sfn$-equivariant.

  Both $\psi_1$ and $\psi_2$ are clearly morphisms of varieties. Furthermore,  it is easily confirmed by a straightforward computation that the compositions between  $\psi_1$ and $\psi_2$ are identities.  Hence, both $\cU_\cs$ and $\cC_\cs$ are   $\sfn$-equivariant isomorphic,  as $\sfn$-varieties.
   This completes the proof.
 \end{proof}
}}

\subsubsection{}
From now on, we identify $\scrz$ with  $\kappa(\cs)^{(1)}\times_{\calv}\hhh^*\slash W$. Consider the projection on the first component for the decomposition of $\scrz$ in (\ref{eq: fiber product}):
$$\pr1: {\scrz}\rightarrow \kappa(\cs)^{(1)}.$$
The homomorphism $\pr1$ of varieties has the complete inverse image
$${\scrz^\flat}:=\pr1^{-1}(\kappa(\cs)^{(1)}_\rss).$$

Then we have the following result.
\begin{prop}\label{prop: eq iso 1}The following is true:
\begin{align*}
\cC_{\cs}\slash \sfn= {\scrz^\flat}.
\end{align*}
\end{prop}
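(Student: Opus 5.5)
The plan is to obtain the statement by combining Corollary \ref{cor: the last cor} with the fiber product description of $\scrz$ in Theorem \ref{cor: fib prod geometry1}. There are three steps: identify $\scrz^\flat$ as a fiber product, invoke the corollary, and check that the two identifications agree.

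\emph{Step 1: $\scrz^\flat$ as a fiber product.} By Theorem \ref{cor: fib prod geometry1}, $\scrz=\kappa(\cs)^{(1)}\times_{\calv}\hhh^*\slash W$. The map $\pr1$ is the first projection of this fiber product. Since $\kappa(\cs)^{(1)}_\rss$ is open in $\kappa(\cs)^{(1)}$ (Lemma \ref{lem: key obser}(2), twisted by Frobenius), base change along an open immersion gives
\begin{align*}
\scrz^\flat=\pr1^{-1}(\kappa(\cs)^{(1)}_\rss)\cong \kappa(\cs)^{(1)}_\rss\times_{\calv}\hhh^*\slash W .
\end{align*}
Here the map $\kappa(\cs)^{(1)}_\rss\to\calv$ is the restriction of $\pi_1$ from (\ref{diag: intersection}). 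This step is formal.

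\emph{Step 2: apply Corollary \ref{cor: the last cor}.} The corollary gives $\cC_\cs\slash\sfn\cong\kappa(\cs)^{(1)}_\rss\times_{\calv}\hhh^*\slash W$. The corollary is written with $\hhh\slash W$, but by the identification via $\kappa$ this is $\hhh^*\slash W$. The isomorphism is induced by $(\vartheta^p,\lambda,g)\mapsto(\vartheta^p,\zeta^*(\lambda))$. Composing with Step 1 yields the claimed identification $\cC_\cs\slash\sfn=\scrz^\flat$.

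\emph{Step 3: compatibility of the maps to $\calv$.} The real content is to check that the variety $\digamma$ used in the proof of the corollary is exactly the fiber product over $\calv$ appearing in Step 1. Concretely, one must show that $(\vartheta^p,\zeta^*(\lambda))$ satisfies $\pi_1(\vartheta^p)=\pi_2(\zeta^*(\lambda))$ if and only if $\mathfrak{F}(\lambda)$ is $W$-conjugate to $(\vartheta')^p$ for some $g$ with $\Ad^*(g^{-1})\vartheta=\vartheta'\in\hhh^*$.

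I would argue this using Remark \ref{rem: JC decomp}. First, $\calv=\Specm(Z_0(\cw)\cap Z_1(\cw))$ is the Frobenius twist of $\hhh^*\slash W$, i.e.\ $\hhh^{*(1)}\slash W$. Second, $\pi_2$ is induced by $\mathfrak{F}$, since $\varphi$ restricted to $Z_p(\ggg)^G$ corresponds to the Artin–Schreier map on $\hhh$. Third, $\pi_1$ is the Frobenius twist of $\zeta^*$ restricted to $\kappa(\cs)$. For $\vartheta$ regular semisimple, $\vartheta=\vartheta_s$, and the fiber condition reduces to the constraint (\ref{eq: conj ss}). That constraint is well defined up to $W$ by (\ref{eq: Jan}), so the fiber condition is exactly the defining condition of $\digamma$.

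The main obstacle I expect is making this identification scheme-theoretic rather than only on points. I would handle it through reducedness. Over the regular semisimple locus the map $\pi_2$ is étale, because the Artin–Schreier map has nonvanishing differential. Hence the fiber product $\kappa(\cs)^{(1)}_\rss\times_{\calv}\hhh^*\slash W$ is reduced. A bijective morphism between reduced varieties that is étale locally (here $\cC_\cs\to\cC_\cs\slash\sfn$ is a free quotient by $\sfn$, via Proposition \ref{prop: eq iso 2} and Lemma \ref{osus}) is then an isomorphism. This completes the proof.
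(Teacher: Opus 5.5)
Your Steps 1--2 are exactly the paper's argument. Its proof just says the proposition is a direct consequence of Corollary \ref{cor: the last cor}, since $\scrz^\flat$ is by definition the part of the fiber product (\ref{eq: fiber product}) lying over the open set $\kappa(\cs)^{(1)}_{\rss}$, and $\digamma$ is identified with $\kappa(\cs)^{(1)}_{\rss}\times_{\calv}\hhh^*\slash W$ by definition.

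Step 3 only re-checks what the corollary already asserts, and if you keep it, fix its ending as follows. The \'etaleness you cite is that of $\cC_{\cs}\to\cC_{\cs}\slash\sfn$, but what you need is that the comparison map $\cC_{\cs}\slash\sfn\to\digamma$ is itself \'etale; bijectivity alone does not suffice in characteristic $p$ (cf.\ Frobenius). This holds because both sides are \'etale over $\kappa(\cs)^{(1)}_{\rss}$. Freeness of the $\sfn$-action is immediate from $g\mapsto gn^{-1}$, so do not route it through Proposition \ref{prop: eq iso 2}. That proposition rests on Lemma \ref{lem: Fro closed} and is not reliable: for $\ggg=\mathfrak{gl}_2$, $e=E_{12}$, the element $\Ad^*(g)\lambda$ produced by $\psi_1$ does not in general lie in $\kappa(\cs)$.
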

\begin{proof}  It is a direct consequence of Corollary \ref{cor: the last cor}.
\end{proof}

\subsection{The main result}\label{sec: proof 02} We are in a position to introduce the main result of this section.

\begin{theorem}\label{thm: main}  The variety ${\scrz^\flat}$ is isomorphic to $\kappa(\cs)_\rss$. Consequently,  $\scrz$ is rational.
\end{theorem}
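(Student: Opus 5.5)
The plan is to chain the isomorphisms already established in \S\ref{sec: 4}. By Proposition \ref{prop: eq iso 1} we have $\scrz^\flat=\cC_\cs\slash\sfn$. By Proposition \ref{prop: eq iso 2} there is an $\sfn$-equivariant isomorphism $\cC_\cs\cong\cU_\cs$, and an equivariant isomorphism descends to the quotients, giving $\cC_\cs\slash\sfn\cong\cU_\cs\slash\sfn$. Lemma \ref{osus} identifies $\cU_\cs\slash\sfn$ with $\co_\cs$, and Lemma \ref{lem: key obser}(1) gives $\co_\cs=\kappa(\cs)_\rss$. Composing these yields $\scrz^\flat\cong\kappa(\cs)_\rss$.

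Concretely, the composite sends $(\vartheta^p,\zeta^*(\lambda))\in\scrz^\flat$ to $\Ad^*(g)\lambda\in\kappa(\cs)_\rss$, where $g\in G$ is chosen so that $\Ad^*(g^{-1})\vartheta\in\hhh^*$. By Lemma \ref{lem: Fro closed}, $\Ad^*(g)\lambda$ does lie in $\kappa(\cs)_\rss$. The inverse sends $\chi\in\kappa(\cs)_\rss$ to $(\textsf{AS}(\chi),\zeta^*(\chi))$. The points I would check explicitly are the following.
\begin{itemize}
\item[(a)] The quotient $\cU_\cs\slash\sfn$ is a genuine geometric quotient. The group $\sfn$ acts freely modulo $T$ on the $g$-coordinate, and $\cU_\cs\to\kappa(\cs)_\rss$ is a principal fibration with fibers $g\sfn$, because regular semisimple elements have centralizer a maximal torus.
\item[(b)] The choice of $g$ does not matter. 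This follows from Remark \ref{rem: JC decomp}(1): changing $g$ multiplies it by an element of $\sfn$, which moves $\lambda$ by some $w\in W$, and $\Ad^*(g)\lambda$ is unchanged.
\end{itemize}

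For rationality, I first show that $\scrz^\flat$ is a nonempty open dense subset of $\scrz$. Openness holds because $\kappa(\cs)^{(1)}_\rss$ is open in $\kappa(\cs)^{(1)}$ by Lemma \ref{lem: key obser}(2), and $\scrz^\flat$ is its preimage under $\pr1$. Density holds because $\scrz=\Specm(Z(\cw))$ is irreducible, $Z(\cw)$ being a domain by Lemma \ref{Noe Pri}(3). Hence $\Frac(\bbk[\scrz])=\Frac(\bbk[\scrz^\flat])\cong\Frac(\bbk[\kappa(\cs)_\rss])$. Next, $\kappa(\cs)_\rss$ is open dense in $\kappa(\cs)\cong\cs\cong\v\cong\bba^{\sfd}$, so this field is $\bbk(x_1^*,\ldots,x_\sfd^*)$, which is purely transcendental. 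Therefore $\scrz$ is rational; it is birational to $\cs$, matching the abstract.

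The main obstacle is step (a): making the quotient identifications rigorous as isomorphisms of varieties rather than bijections of points. Specifically, I need that $\cU_\cs\to\co_\cs$, $(\chi,g)\mapsto\chi$, is a categorical quotient by $\sfn$. I would argue via the étale-local triviality of $G\times^{\sfn}\hhh^*_{\rm reg}\to\ggg^*_\rss$, which is an isomorphism onto the regular semisimple locus, and then base change this along $\kappa(\cs)_\rss\hookrightarrow\ggg^*_\rss$. Since only birationality is needed for the final claim, it would even suffice to establish the isomorphism on a smaller open subset, which keeps the argument safe.
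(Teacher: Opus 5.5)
\textbf{There is a genuine gap, inherited from the lemmas you cite.}

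Your chain is exactly the paper's: Propositions~\ref{prop: eq iso 1} and~\ref{prop: eq iso 2}, Lemma~\ref{osus}, Lemma~\ref{lem: key obser}(1), then density. The point you single out as the main obstacle, (a), is actually harmless. $\cU_\cs\to\kappa(\cs)_\rss$ is the base change of the $\sfn$-torsor $G\times\hhh^*_\reg\to G\times^{\sfn}\hhh^*_\reg\cong\ggg^*_\rss$, so $\cU_\cs/\sfn\cong\kappa(\cs)_\rss$.

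The real gap is a step you take for granted: that $\Ad^*(g)\lambda\in\kappa(\cs)_\rss$, and that $\chi\mapsto(\textsf{AS}(\chi),\zeta^*(\chi))$ lands in $\scrz^\flat$. These are Lemma~\ref{lem: Fro closed} (hence Proposition~\ref{prop: eq iso 2}) and Lemma~\ref{lem: key obser}(1), and both fail.

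(i) This already happens for $e=0$. Suppose $\chi=\Ad^*(g)\lambda$ with $\lambda\in\hhh^*$ regular but $\lambda(h_\alpha)\in\bbf_p^\times$ for some root $\alpha$. Then $\mathfrak{F}(\lambda)(h_\alpha)=\lambda(h_\alpha)^p-\lambda(h_\alpha)=0$. So the first component of your inverse is not regular, and the image point lies in $\scrz\setminus\scrz^\flat$. Correspondingly, $\co_\cs$ (cut out by $\fh\neq0$) is a proper open subset of $\kappa(\cs)_\rss$. For $\mathfrak{sl}_2$, any $\chi\in\hhh^*$ with $\chi(h)=1$ is regular semisimple with $\fh(\chi)=0$. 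So Lemma~\ref{lem: key obser}(1) is false.

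(ii) For $e\neq0$, keeping the same $g$ while changing the $\hhh^*$-component moves you off the slice. Take $\ggg=\mathfrak{sl}_2$, $e=E_{12}$, $\cs=\{e+tf\}$, and let $g$ conjugate $e+tf$ ($t\neq0$) to $\mathrm{diag}(\sqrt t,-\sqrt t)$. Then $\Ad(g)\,\mathrm{diag}(m,-m)=(m/\sqrt t)(e+tf)$. This lies in $\cs$ only if $m=\sqrt t$, i.e.\ only if $\lambda=\vartheta'$. The defining equation $\vartheta'(h)^p=\lambda(h)^p-\lambda(h)$ of $\cC_\cs$ excludes this. So $\psi_1$ maps no point of $\cC_\cs$ into $\cU_\cs$, and your fallback of shrinking to a smaller open set cannot help.

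\textbf{The first assertion of the theorem is false as stated.} Take $G=SL_2$, $p\geq3$, $e$ regular and $\cs=\{e+tf\}$.
\begin{itemize}
\item Here $\varpi_\cs$ is an isomorphism, so Theorem~\ref{cor: fib prod geometry1} gives $\scrz\cong\hhh^*/W\cong\bba^1$ with coordinate $u=h^2$. The $\rho$-shift is harmless, since it fixes $h^p-h$.
\item $\mathrm{pr}_1$ becomes the map induced by $\mathfrak{F}$, namely $u\mapsto v=(h^p-h)^2=u(u^{(p-1)/2}-1)^2$, where $v$ is the twisted discriminant on $\kappa(\cs)^{(1)}$.
\item Hence $\scrz^\flat=\{v\neq0\}$ is $\bba^1$ minus the $(p+1)/2$ points $\{c^2\mid c\in\bbf_p\}$.
\item By contrast, $\kappa(\cs)_\rss=\{t\neq0\}\cong\bba^1\setminus\{0\}$.
\item Their unit groups modulo $\bbk^\times$ are free of ranks $(p+1)/2\geq2$ and $1$, so the two varieties are not isomorphic.
\end{itemize}

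Consequently, the rationality claim is not reached by this route either. The map $\mathrm{pr}_1\colon\scrz^\flat\to\kappa(\cs)^{(1)}_\rss$ is finite \'etale of degree $p^r$, not birational. Nothing else in the chain produces a birational map $\scrz\dashrightarrow\kappa(\cs)$.

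Your irreducibility and density step is fine; what is missing is a correct identification of $\scrz^\flat$. For $e=0$, the $G$-equivariant argument does give $\scrz^\flat\cong\co_\cs$ (Tange's open set). For regular $e$, one has $\scrz\cong\bba^r$ outright. For general $e$, the conjugating element cannot be held fixed, and a genuinely different birational map is needed.
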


\begin{proof}
Summing up  Lemmas \ref{lem: key obser} and \ref{osus}   along with Propositions \ref{prop: eq iso 2} and \ref{prop: eq iso 1}, the first part of the theorem follows. As to the second part, {it follows from the definition of ${\scrz^\flat}$ and Proposition \ref{prop: regular ss open den} that}  ${\scrz^\flat}$ is a {non-empty} open dense subset of the irreducible variety $\scrz$. So $\scrz$ is bi-rationally equivalent to the affine space $\kappa(\cs)$. The proof is completed.
\end{proof}

 This yields Theorem \ref{thm: rationality}.

\subsection*{Acknowledgement} The authors are deeply indebted to  the anonymous referee for helpful comments and valuable suggestions which enable them to make corrections and improvements on the earlier versions of the manuscript.


\begin{thebibliography}{A1}
\bibitem{A} T. Arakawa, {\em Introduction to W-algebras and their representation theory}, in: Perspectives in Lie Theory, in: Springer INdAM Series, vol. 19, 2017, 179-250.





\bibitem{BGl} K. A. Brown and K. R. Goodearl, {\em Homological aspects of Noetherian PI Hopf algebras and irreducible modules of maximal dimension}, J. Algebra 198 (1997), 240-265.

\bibitem{BGo}  K. A. Brown and I. Gordon, {\em The ramification of centres: Lie algebras in positive characteristic and quantised enveloping algebras}, Math. Z. 238 (2001), 733-779.


\bibitem{Cart} R. W. Carter, {\em Finite groups of Lie type: conjugacy classes and complex characters}, John Wiley \& Sons, 1985.








\bibitem{GW} R. Goodman and N. R. Wallach, {\em Symmetry, representations, and invariants}, GTM 255,  Springer, Dordrecht, 2009.

\bibitem{GT} S. M. Goodwin and L. W. Topley, {\em Modular finite $W$-algebras},  Internat. Math. Res. Notices 18
(2019), 5811-5853.





\bibitem{Jan1} J. C. Jantzen, {\em Representations of algebraic groups}, 2nd ed., Math. Surveys Monogr. 107, American Mathematical Society, Providence,
2003.


\bibitem{Jan2} J. C. Jantzen, {\em Representations of Lie algebras in prime characteristic}, in: Representation Theories and Algebraic Geometry, Proc. Montr\'{e}al (NATO ASI series C 524) (1997), 185-235.

\bibitem{Jan3} J. C. Jantzen, {\em Nilpotent orbits in representation theory}, Progress in Math., vol. 228, Birkh\"{a}user, 2004.



\bibitem{KW}  V. Kac and B. Weisfeiler, {\em Coadjoint action of a semisimple algebraic group and the center of the enveloping algebra in characteristic $p$}, Indag. Math. 38 (1976), 136-151.



\bibitem{Ko} B. Kostant, {\em On Whittaker vectors and representation theory}, Invent. Math. 48 (1978), 101-184.



\bibitem{L4} I. Losev, {\em Finite W-algebras}, in: Proceedings of the International Congress of Mathematicians, Vol. III, Hindustan Book Agency, New Delhi, 2010, 1281-1307.

\bibitem{Lyn} T. E. Lynch, {\em Generalized Whittaker vectors and representation theory}, PhD thesis, M. I. T., 1979.



\bibitem{MiRu} I. Mirkovi\'{c} and D. Rumynin,  {\em Centers of reduced enveloping algebras}, Math. Z. 231 (1999), 123-132.




    \bibitem{Po}  K. Pommerening, {\em {$\ddot{U}ber$} die unipotenten Klassen reduktiver Gruppen II}, J. Algebra 65 (1980), 373-398.


\bibitem{Pre1} A. Premet, {\em Special transverse slices and their enveloping algebras}, Advances in Math. 170 (2002), 1-55.



\bibitem{Pre3} A. Premet, {\em Commutative quotients of finite $W$-algebras}, Advances in Math. 225 (2010), 269-306.

\bibitem{Pre2} A. Premet, {\em  Enveloping algebras of Slodowy slices and Goldie rank}, Transform. Groups 16 (2011), 857-888.


\bibitem{PT} A. Premet, R. Tange, {\em Zassenhaus varieties of general linear Lie algebras}, J. Algebra 294  (2005), 177-195.


\bibitem{SZ} B. Shu and Y. Zeng, {\em  Centers and Azumaya loci for finite  $W$-algebras in positive characteristic},
Math. Proc. Cambridge Philos. Soc. 173 (2022), 35-66.


\bibitem{Slo} P. Slodowy, {\em Simple singularities and simple algebraic groups}, Lecture Notes in Mathematics 815, Springer-Verlag, Berlin/Heidelberg/New York, 1980.

\bibitem{Spa} N. Spaltenstein, {\em Existence of good transversal slices to nilpotent orbits in good characteristics}, J.  Fac.  Sci.  Univ.  Tokyo (IA) 31 (1984), 283-286.


\bibitem{Se} J.-P. Serre, {\em Algebre locale---Multiplicites}, Lecture Notes in Math., vol. 11, Springer-Verlag, Berlin-New York, 1975.


\bibitem{ST} D. I. Stewart and A. R. Thomas, {\em The Jacobson-Morozov theorem and complete reducibility of Lie subalgebras}, Proc. Lond. Math. Soc. (3) 116 (2018), no. 1, 68–100.


\bibitem{SF} H. Strade and R. Farnsteiner, {\em Modular Lie algebras and their representations}, Monographs and textbooks in pure and applied mathematics; V. 116, Marcel Dekker, Inc., New York, Basel, 1988.

\bibitem{T}  R. Tange,  {\em The Zassenhaus variety of a reductive Lie algebra in positve characteristic}, Advances in Math. 224 (2010), 340-354.

\bibitem{Ve} F. D. Veldkamp, {\em The center of  the universal enveloping algebra of a Lie algebra in characteristic $p$}, Ann. Sci. Ecole Norm. Sup. 5 (1972), 217-240.

\bibitem{Var} V. S. Varadarajan, {\em Lie groups, Lie algebras, and their representations}, GTM 102, Springer-Verlag, New York, 1984.

\bibitem{W} W. Wang, {\em Nilpotent orbits and finite $W$-algebras}, Fields Inst. Commun. 59 (2011), 71-105.

\end{thebibliography}
\end{document}